\newif\ifpix \pixtrue
\numberwithin{equation}{section}
\numberwithin{equation}{section}
\def\R{\mathbb{R}}
\def\C{\mathbb{C}}
\def\H{\mathbb{H}}
\def\Z{\mathbb{Z}}
\def\Q{\mathbb{Q}}
\def\T{\mathbb{T}}
\def\P{\mathbb{P}}
\def\D{\mathbb{D}}
\def\k{\mathfrak{k}}
\def\h{\mathfrak{h}}
\def\t{\mathfrak{t}}
\def\om{\omega}
\def\Om{\Omega}
\def\ep{\varepsilon}
\def\deg{\mathrm{deg}}
\def\del{\partial}
\def\delb{\overline{\partial}}
\newtheorem{lemma}[subsubsection]{Lemma}
\newtheorem{prop}[subsubsection]{Proposition}
\newtheorem{cor}[subsubsection]{Corollary}
\newtheorem{theo}[subsection]{Theorem}
\newtheorem{theointro}{Theorem}
\theoremstyle{definition}
\newtheorem{dfn}[subsubsection]{Definition}
\newtheorem{rmk}[subsubsection]{Remark}
\newtheorem*{rmk*}{Remark}
\newtheorem*{rmks*}{Remarks}
\newtheorem{rmks}[subsection]{Remarks}
\renewenvironment{proof}{\paragraph{\emph{Proof}}} {~Q.E.D.\medskip}
\newenvironment{remark*}{\begin{rmk*} --- \normalfont} { \end{rmk*} }
\newenvironment{remarks*}{\begin{rmks*} \begin{enumerate} \normalfont}
{\end{enumerate} \end{rmks*} } 
\title[Extremal K\"ahler metrics on blow-ups of parabolic ruled surfaces]{Extremal K\"ahler metrics on blow-ups of parabolic ruled surfaces}
\date{June 2011}
\author{Carl Tipler}
\address{Carl Tipler, Laboratoire Jean Leray LMJL, Nantes France}
\email{carl.tipler@univ-nantes.fr}
\subjclass[2000]{Primary 53C55; Secondary 32Q26}
\begin{document}

{\Huge \sc \bf\maketitle}
\begin{abstract}
\sloppypar{New examples of extremal K\"ahler metrics are given on blow-ups of parabolic ruled surfaces. 
The method used is based on the gluing construction of Arezzo, Pacard and Singer \cite{aps}. 
This enables to endow ruled surfaces of the form $\P(\mathcal{O}\oplus L)$ with special parabolic structures such that 
the associated iterated blow-up admits an extremal
metric of non-constant scalar curvature.}
\end{abstract}

\section{ Introduction}
\label{secintro}
In this paper is adressed the problem of existence of extremal K\"ahler metrics on ruled surfaces. 
An extremal K\"ahler metric
on a compact K\"ahler manifold $M$ is a metric that minimizes the Calabi functional in a given K\"ahler class $\Om$:

$$
\begin{array}{ccc}
 \lbrace \om\in\Om^{1,1}(M,\R),d\om=0,\; \om >0 \; / [\om]=\Om \rbrace & \rightarrow & \R \\
 \om & \mapsto & \int_M s(\om)^2 \om^n.
\end{array}
$$

Here, $s(\om)$ stands for the scalar curvature of $\om$ and $n$ is the complex dimension of $M$.
Constant scalar curvature metrics are examples of extremal metrics.
If the manifold is polarized by an ample line bundle $L$ 
the existence of such a metric in the class $c_1(L)$
is related to a notion of stability of the pair $(M,L)$. More precisely, the works of Yau \cite{y}, Tian \cite{t},
Donaldson \cite{d} and lastly Sz\'ekelyhidi \cite{s1}, led to the conjecture that a polarized manifold $(M,L)$
admits an extremal K\"ahler metric in the K\"ahler class $c_1(M)$ if and only if it is
relatively K-polystable. So far it has been proved that the existence of a constant scalar curvature K\"ahler metric implies K-stability \cite{mab} and the existence of an extremal metric implies relative K-polystability \cite{ss}. 

We will focus on the special case of complex ruled surfaces.
First consider a geometrically ruled surface $M$. 
This is the total space of a fibration 
$$
\P(E) \rightarrow \Sigma
$$
where $E$ is a holomorphic bundle of rank $2$ on a compact Riemann surface $\Sigma$.
In that case, the existence of extremal metrics is related to the stability of the bundle $E$.
A lot of work has been done in this direction, we refer to \cite{acgt} for a survey on this topic.

Moreover, in this paper, Apostolov, Calderbank, Gauduchon and T\o{}nnesen-Friedman prove that 
if the genus of $\Sigma$ is greater than two, then $M$ admits a metric of constant scalar curvature in some class 
if and only if $E$ is polystable. 
Another result due to T\o{}nnesen-Friedman \cite{tf} is that if the genus of $\Sigma$
is greater than two, then there exists an extremal K\"ahler metric of non-constant scalar curvature on $M$
if and only if $M=\P(\mathcal{O} \oplus L)$ with $L$ a line bundle of positive degree (see also \cite{s2}).
Note that in that case the bundle is unstable.

The above results admit partial counterparts in the case of parabolic ruled surfaces (see definition \ref{parab}).
In the papers \cite{rs} and \cite{rs2}, Rollin and Singer showed that the parabolic polystability of a parabolic
ruled surface $S$ implies the existence of a constant scalar curvature metric on an iterated blow-up of $S$ encoded by the parabolic structure.

It is natural to ask for such a result in the extremal case. If there exists an extremal metric of non-constant 
scalar curvature on an
iterated blow-up of a parabolic ruled surface, the existence of the extremal vector field implies that
$M$ is of the form $\P(\mathcal{O}\oplus L)$. Moreover, the marked points of the parabolic structure must lie
on the zero or infinity section of the ruling. Inspired by the results mentioned above, we can ask if for every unstable
parabolic structure on a minimal ruled surface of the form $M=\P(\mathcal{O}\oplus L)$, with marked points on the infinity
section of the ruling, 
one can associate an iterated blow-up of $M$ supporting an extremal K\"ahler metric of non-constant scalar 
curvature.

Arezzo, Pacard and Singer, and then Sz\'ekelyhidi, proved that under some stability conditions, 
one can blow-up an extremal K\"ahler manifold and obtain an extremal K\"ahler metric on the blown-up manifold for
sufficiently small metric on the exceptional divisor. This blow-up process enables to prove that many 
of the unstable parabolic structures give rise to extremal K\"ahler metrics of non-constant scalar curvature
on the associated iterated blow-ups. A modification of their argument will enable to get more examples
of extremal metrics on blow-ups encoded by unstable parabolic structures. 
\smallskip

In order to state the result, we need some definitions about parabolic structures.
Let $\Sigma$ be a Riemann surface and $\check{M}$ a geometrically ruled surface, total space of a fibration
$$
\pi:\P(E) \rightarrow \Sigma
$$
with $E$ a holomorphic bundle.

\begin{dfn}
\label{parab}
A \textit{parabolic structure} $\mathcal{P}$ on $$\pi: \check{M}=P(E) \rightarrow \Sigma$$ is the data of $s$ distinct points $(A_i)_{1\leq i \leq s}$ on $\Sigma$ and for each of these points the assignment of a point $B_i\in \pi^{-1}(A_i)$ with a weight $\alpha_i\in(0,1)\cap \Q$.
A geometrically ruled surface endowed with a parabolic structure is called a \textit{parabolic ruled surface}.
\end{dfn}

In the paper \cite{rs}, to each parabolic ruled surface is associated an iterated blow-up 
$$
\Phi: Bl(\check{M},\mathcal{P}) \rightarrow \check{M}.
$$
We will describe the process to construct $Bl(\check{M},\mathcal{P})$ in the case of a parabolic ruled surface whose parabolic structure consists of a single point, the general case being obtained operating the same way for each marked point.
Let $\check{M} \rightarrow \Sigma$ be such a parabolic ruled surface with $A\in\Sigma$, marked point
$Q\in F:= \pi^{-1}(A)$ and weight $\alpha=\dfrac{p}{q}$, 
with $p$ and $q$ coprime integers, $0<p<q$. Denote the expansions of $\dfrac{p}{q}$ and $\dfrac{q-p}{q}$ 
into continuous fractions by:
$$
 \dfrac{p}{q}=\dfrac{1}{e_1-\dfrac{1}{e_2-...\dfrac{1}{e_k}}}
$$
and
$$
\dfrac{q-p}{q}=\dfrac{1}{e'_1-\dfrac{1}{e'_2-...\dfrac{1}{e'_l}}}.
$$
Suppose that the integers $e_i$ and $e_i'$ are greater or equal than two so that these expansions are unique.
Then from \cite{rs} there exists a unique iterated blow-up 
$$
\Phi: Bl(\check{M},\mathcal{P}) \rightarrow \check{M}
$$
with $\Phi^{-1}(F)$ equal to the following chain of curves:

$$
\xymatrix{
{}\ar@{-}[r]^{-e_1} & *+[o][F-]{}
\ar@{-}[r]^{ -e_{2}} &  *+[o][F-]{}
\ar@{--}[r] &  *+[o][F-]{}
\ar@{-}[r]^{-e_{k-1}} &  *+[o][F-]{}
\ar@{-}[r]^{-e_k} &  *+[o][F-]{}
\ar@{-}[r]^{-1} &  *+[o][F-]{}
\ar@{-}[r]^{-e'_{l}} &  *+[o][F-]{}
\ar@{-}[r]^{-e'_{l-1}} &  *+[o][F-]{}
\ar@{--}[r] &  *+[o][F-]{}
\ar@{-}[r]^{-e'_{2}} &  *+[o][F-]{}
\ar@{-}[r]^{ -e'_{1}} & 
}
$$

The edges stand for the rational curves, with self-intersection number above them. 
The dots are the intersection of the curves, of intersection number $1$.
Moreover, the curve of self-intersection $-e_1$ is the proper transform of the fiber $F$. In order to get this blow-up, start by blowing-up the marked point and obtain the following curves:
$$
\xymatrix{
{}\ar@{-}[r]^{-1} & *+[o][F-]{}
\ar@{-}[r]^{-1} & 
}.
$$
Here the curve on the left is the proper transform of the fiber and the one on the right is the first exceptional divisor.
Then blow-up the intersection of these two curves to obtain
$$
\xymatrix{
{}\ar@{-}[r]^{-2} & *+[o][F-]{}
\ar@{-}[r]^{-1} &  *+[o][F-]{}
\ar@{-}[r]^{-2} & 
}.
$$
Then choosing one of the two intersection points that the last exceptional divisor 
gives and iterating the process, one obtain the following chain of curves

$$
\xymatrix{
{}\ar@{-}[r]^{-e_1} & *+[o][F-]{}
\ar@{-}[r]^{ -e_{2}} &  *+[o][F-]{}
\ar@{--}[r] &  *+[o][F-]{}
\ar@{-}[r]^{-e_{k-1}} &  *+[o][F-]{}
\ar@{-}[r]^{-e_k} &  *+[o][F-]{}
\ar@{-}[r]^{-1} &  *+[o][F-]{}
\ar@{-}[r]^{-e'_{l}} &  *+[o][F-]{}
\ar@{-}[r]^{-e'_{l-1}} &  *+[o][F-]{}
\ar@{--}[r] &  *+[o][F-]{}
\ar@{-}[r]^{-e'_{2}} &  *+[o][F-]{}
\ar@{-}[r]^{ -e'_{1}} & 
}
$$

\begin{rmk}
The chain of curves on the left of the one of self-intersection number $-1$ 
is the chain of a minimal resolution of a singularity of $A_{p,q}$ type and the one on the right of a singularity of 
$A_{q-p,q}$ type (see section~\ref{secHJsing}).
\end{rmk}

\begin{rmk}
In \cite{rs}, the curve of self-intersection $-e_1$ is the proper transform of the exceptional divisor of the first blow-up while here this is the proper transform of the fiber $F$.
\end{rmk}

Recall that the zero section of a ruled surface $\P(\mathcal{O} \oplus L)$ is the section given by the zero section of 
$L \rightarrow \Sigma$ and the inclusion $L \subset \P(\mathcal{O} \oplus L)$. The infinity section is given by the zero section
of $\mathcal{O} \rightarrow \Sigma$ in the inclusion $\mathcal{O} \subset \P(\mathcal{O} \oplus L)$.
Given a surface $\Sigma$, $\mathcal{K}$ stands for its canonical bundle and if $A\in\Sigma$, $[A]$ is the line bundle associated to the divisor $A$.
Then we can state:

\begin{theointro}
 \label{theoB}
Let $r$ and $(q_j)_{j=1..s}$ be positive integers such that for each $j$,
 $q_j\geq 3$ and 
$$gcd(q_j,r)=1.$$
 For each $j$, let 
$$p_j\, \equiv \, -r \, [q_j], \: 0<p_j<q_j\, ,\, n_j=\dfrac{p_j+r}{q_j}.$$
Let $\Sigma$ be a Riemann surface of genus $g$ and $s$ marked points $(A_j)$ on it. 
Define a parabolic structure $\mathcal{P}$ on
$$
\check{M}=\P(\mathcal{O} \oplus (\mathcal{K}^r\otimes_j[A_j]^{r-n_j}))
$$
consisting of the points $(B_j)$ in the infinity section of the ruling of $\check{M}$ over the points 
$(A_j)$ together with the weights $(\frac{p_j}{q_j})$. 
If 
$$
\chi(\Sigma)-\sum_j (1-\dfrac{1}{q_j})<0
$$
then there exists an extremal K\"ahler metric of non-constant scalar curvature on $Bl(\check{M},\mathcal{P})$.
This metric is not small on every exceptional divisor.
\end{theointro}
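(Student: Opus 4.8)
The plan is to realize $Bl(\check M,\mathcal P)$ as an iterated blow-up of the T\o nnesen-Friedman extremal surface and to feed this into a gluing theorem à la Arezzo--Pacard--Singer, but with the essential twist that the blow-ups are performed at points lying on a holomorphic curve fixed by the extremal vector field, so that the gluing parameters need not all go to zero. First I would recall that, by T\o nnesen-Friedman's theorem, the minimal ruled surface $\check M=\P(\mathcal O\oplus(\mathcal K^r\otimes_j[A_j]^{r-n_j}))$ carries an explicit extremal K\"ahler metric $\omega$ of non-constant scalar curvature, for suitable K\"ahler classes, precisely because the underlying line bundle has positive degree (here one checks that $\deg(\mathcal K^r\otimes_j[A_j]^{r-n_j})=r(2g-2)+\sum_j(r-n_j)>0$ under the hypothesis $\chi(\Sigma)-\sum_j(1-\tfrac1{q_j})<0$, after substituting $n_j=(p_j+r)/q_j$; this degree computation is exactly the numerical content of the stated inequality). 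The extremal vector field of $(\check M,\omega)$ is the generator of the natural $S^1$-action on the fibres, and its zero locus contains the infinity section $\Sigma_\infty$ on which the marked points $B_j$ sit.

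Next I would identify the combinatorics: blowing up $\check M$ according to the parabolic weight $p_j/q_j$ at the point $B_j\in\Sigma_\infty$ produces the two Hirzebruch--Jung chains described in the excerpt (resolving $A_{p_j,q_j}$ on one side and $A_{q_j-p_j,q_j}$ on the other, joined by a $-1$-curve which is the proper transform of a $-1$-curve obtained after the first blow-up). The key structural observation is that because $B_j$ lies on the fixed curve $\Sigma_\infty$, all the points successively blown up can be taken on curves invariant under the extremal $S^1$-action, so that the whole iterated blow-up $Bl(\check M,\mathcal P)\to\check M$ is equivariant. I would then invoke the relative version of the Arezzo--Pacard--Singer gluing construction (as extended by Sz\'ekelyhidi, cited in the excerpt): given an extremal metric invariant under a torus $T$, one may blow up $T$-fixed points and glue in scaled Burns--Simanca-type ALE models to obtain an extremal metric on the blow-up, provided a finite-dimensional stability/balancing condition on the location of the points (with respect to the moment map of $T$) is satisfied. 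The chain of $s$ separate parabolic blow-ups is handled one marked point at a time, and the constraints decouple point by point because the fixed curve $\Sigma_\infty$ has enough room.

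The technical heart — and the main obstacle — is the modification of the gluing argument needed to prove the last sentence, that the resulting metric is \emph{not} small on every exceptional divisor. In the plain APS theorem the glued metric has all exceptional divisors of volume $O(\varepsilon^{2})$ with $\varepsilon\to0$; here, because of the iterated (non-simple) nature of the blow-up and the presence of $-e_i$-curves with $e_i\geq 2$, one cannot shrink every exceptional class independently. I would argue that the correct model to glue at the point $B_j$ is not a single Burns--Simanca metric but the explicit extremal (indeed scalar-flat or constant-scalar-curvature) ALE metric on the \emph{resolution of the $A_{p_j,q_j}$-type quotient singularity}, whose K\"ahler classes form a cone of dimension $k_j+l_j$ rather than a half-line; gluing this whole family in, while keeping one class fixed, yields a metric whose restriction to at least some exceptional curves has volume bounded below independently of the gluing parameter. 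Concretely, the steps are: (1) build the $T$-invariant extremal ALE model on each Hirzebruch--Jung resolution, with an appropriate multi-parameter K\"ahler cone; (2) run the implicit-function / Banach fixed-point scheme of APS in weighted H\"older spaces, checking that the linearized operator is uniformly invertible modulo the obstruction space coming from $T$; (3) verify the balancing condition, which amounts to the parabolic polystability-type numerics already implicit in the definition of $n_j$ and in the hypothesis on $\chi(\Sigma)$; and (4) track the K\"ahler class through the construction to certify non-smallness on the relevant divisors. I expect step (2), and in particular the uniform estimate on the linearization across the long Hirzebruch--Jung chains (where the geometry degenerates in a controlled but delicate way), to be where the real work lies; everything else is either explicit (T\o nnesen-Friedman's ansatz, the ALE models) or bookkeeping (the degree and balancing computations).
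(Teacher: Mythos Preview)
Your approach has two genuine gaps. First, the degree computation you claim is ``exactly the numerical content'' of the hypothesis is incorrect: writing $r-n_j=r(1-\tfrac1{q_j})-\tfrac{p_j}{q_j}$ gives
\[
\deg L \;=\; r(2g-2)+\sum_j(r-n_j)\;=\;-r\,\chi^{orb}\;-\;\sum_j\frac{p_j}{q_j},
\]
and the hypothesis $\chi^{orb}<0$ does \emph{not} force this to be positive. For instance with $g=1$, $s=1$, $q_1=3$, $r=1$ one has $p_1=2$, $n_1=1$, and $\deg L=0$; this is precisely the $\check M=\mathbb{CP}^1\times\mathbb{T}^2$ example treated in the paper, where there is no T\o nnesen-Friedman extremal metric on the smooth $\check M$ to start from. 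Second, your proposed fix for the ``not small on every divisor'' clause --- gluing the Joyce--Calderbank--Singer ALE resolution of $\mathbb{C}^2/\Gamma_{p_j,q_j}$ at the smooth point $B_j\in\check M$ --- is topologically impossible: that ALE space is asymptotic to the lens space $S^3/\Gamma_{p_j,q_j}$, not to $S^3$, so it cannot be glued into a smooth chart. If instead you iterate Arezzo--Pacard--Singer blow-ups from the smooth $\check M$ (when $\deg L>0$), you do get extremal metrics, but the divisor of non-small volume is the proper transform of the original smooth fibre, i.e.\ the curve at the \emph{end} of the Hirzebruch--Jung chain; the theorem asserts something stronger, namely that the $-1$-curve in the \emph{middle} of the chain carries volume bounded away from zero.

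The paper avoids both problems by inserting an orbifold step. One endows $\Sigma$ with an orbifold structure $\overline\Sigma$ of orders $q_j$ at the $A_j$; the hypothesis $\chi^{orb}<0$ then guarantees a Fuchsian uniformization $\overline\Sigma=\mathbb D/\Gamma$, and T\o nnesen-Friedman's $U(1,1)$-invariant metric on $\mathbb P(\mathcal O\oplus U^{2r})$ descends to an honest extremal K\"ahler metric on the orbifold $\overline M=\mathbb P(\mathcal O\oplus\mathcal K_{orb}^r)$ --- this step uses only $\chi^{orb}<0$, with no positivity needed on the smooth side. A local computation shows that $\overline M$ has two isolated Hirzebruch--Jung singularities on each singular fibre, of types $A_{p_j,q_j}$ and $A_{q_j-p_j,q_j}$; now the Joyce--Calderbank--Singer models glue in legitimately, and the Arezzo--Pacard(--Singer) machinery, transported to the orbifold setting, produces the extremal metric on the resolution. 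The $-1$-curve in the middle of each chain is the proper transform of the orbifold fibre, hence inherits volume $\approx 2\pi(b-a)/(2rq_j)$ from the orbifold metric and is not small. Finally one identifies the minimal resolution of $\overline M$ with $Bl(\check M,\mathcal P)$ by tracking what the local resolution maps $\phi_j$ do to the transition functions of $\mathcal K_{orb}^r$; this is where the twist $\otimes_j[A_j]^{r-n_j}$ appears. So the essential idea you are missing is that the ``base'' for the gluing is the orbifold $\overline M$, not the smooth minimal model $\check M$.
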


\begin{rmk}
 The parabolic structure is unstable. We will see that the infinity section destabilises 
the parabolic surface.
\end{rmk}

\begin{rmk}
 The K\"ahler classes of the blow-up which admits the extremal metric can be explicitly computed; 
this will be explained in Section~\ref{classes}.
Moreover, these classes are different from the one that could be obtained from the work of Arezzo, Pacard and Singer.
\end{rmk}

Using a slightly more general construction, we will obtain:

\begin{theointro}
 \label{general}
Let $M=\P(\mathcal{O}\oplus L)$ be a ruled surface over a Riemann surface of genus $g$, with $L$ a line bundle of degree $d$. 
If $g\geq 2$ we suppose $d=2g-2$ or $d\geq 4g-3$. 
Then there exists explicit unstable parabolic structures on $M$ such that each associated
iterated blow-up $Bl(M,\mathcal{P})$ admits an extremal K\"ahler metric of non-constant scalar curvature.
The K\"ahler class obtained is not small on every exceptional divisor.
\end{theointro}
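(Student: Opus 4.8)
The plan is to deduce Theorem~\ref{general} from Theorem~\ref{theoB} by an explicit choice of parabolic data, reducing the statement about an arbitrary admissible line bundle $L$ of degree $d$ to the bundles of the form $\mathcal{K}^r\otimes_j[A_j]^{r-n_j}$ that appear in Theorem~\ref{theoB}. First I would recall that on a Riemann surface $\Sigma$ of genus $g$ one has $\deg\mathcal{K}=2g-2$ and $\deg[A_j]=1$ for each point, so that the bundle produced in Theorem~\ref{theoB} has degree $r(2g-2)+\sum_j(r-n_j)$. The task is therefore arithmetic: given a target degree $d$, find an integer $r$, a number of points $s$, and integers $q_j\geq 3$ with $\gcd(q_j,r)=1$ such that, setting $p_j\equiv -r\ [q_j]$ with $0<p_j<q_j$ and $n_j=(p_j+r)/q_j$, the resulting degree equals $d$ while the negativity condition $\chi(\Sigma)-\sum_j(1-\tfrac1{q_j})<0$, i.e. $\sum_j(1-\tfrac1{q_j})>2-2g$, is satisfied. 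I would also have to check that the holomorphic bundle $L$ on $\Sigma$ with the prescribed degree can be taken to be the specific bundle $\mathcal{K}^r\otimes_j[A_j]^{r-n_j}$ up to the freedom in choosing the points $A_j$ and possibly twisting by a further torsion-free adjustment — here one should note that a minimal ruled surface $\P(\mathcal{O}\oplus L)$ depends on $L$ only through its degree up to the relevant biholomorphism (since $\P(\mathcal{O}\oplus L)\cong\P(\mathcal{O}\oplus L')$ whenever $L$ and $L'$ differ by a line bundle of degree zero only in special cases), so in general one must argue that the construction of Theorem~\ref{theoB} in fact applies to any $L$ of the admissible degrees, not merely to those isomorphic to $\mathcal{K}^r\otimes_j[A_j]^{r-n_j}$; the flexibility in the choice of the points $A_j$ together with a free choice of the auxiliary degree-zero part should suffice.

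The core of the argument is the case analysis on $g$. For $g=0$ the condition $\sum_j(1-\tfrac1{q_j})>2$ with each $q_j\geq 3$ forces $s\geq 4$ (since $s\cdot\tfrac23\geq\sum(1-\tfrac1{q_j})>2$ needs $s>3$), which is always achievable, so one expects no degree restriction beyond what the construction itself imposes; for $g=1$ one needs $\sum_j(1-\tfrac1{q_j})>0$, so a single marked point with $q_1\geq 3$ works. The interesting regime is $g\geq 2$, where the negativity condition is automatic (the left side is $\leq 2-2g<0$ can fail — wait, $\chi(\Sigma)=2-2g\leq -2<0$ and we subtract a positive quantity, so $\chi(\Sigma)-\sum_j(1-\tfrac1{q_j})<0$ holds for \emph{every} choice), so the only constraint is realizing the degree $d$ as $r(2g-2)+\sum_j(r-n_j)$ with the coprimality constraints. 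I would first dispose of the boundary case $d=2g-2$: take $r=1$ and $s=0$, so $\check{M}=\P(\mathcal{O}\oplus\mathcal{K})$; but $s=0$ gives an empty parabolic structure, so instead one takes $r=1$ together with marked points whose contributions $1-n_j$ cancel — with $r=1$ we need $\gcd(q_j,1)=1$ automatically, $p_j\equiv -1\ [q_j]$ gives $p_j=q_j-1$, hence $n_j=(q_j-1+1)/q_j=1$ and $r-n_j=0$, so every marked point contributes $0$ to the degree twist and the bundle is $\mathcal{K}$ regardless of $s$. This is exactly the mechanism: with $r=1$ one gets arbitrarily many marked points for free without changing $L=\mathcal{K}$, and the blow-up is non-trivial. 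For $d\geq 4g-3$ I would solve $d=r(2g-2)+\sum_j(r-n_j)$ by choosing $r$ appropriately (note $r\geq 2$ is needed to get positive contributions $r-n_j\geq 1$, since $n_j\leq\lceil (q_j-1+r)/q_j\rceil$ and for large $q_j$ one has $n_j=1$, giving contribution $r-1$); writing $d=r(2g-2)+k(r-1)+(\text{corrections})$ and using the freedom in $k=s$ and in the finitely many $q_j$ that are not "large" to adjust the remainder modulo $r-1$, I would exhibit an explicit solution, the condition $d\geq 4g-3$ being precisely what guarantees the linear Diophantine problem is solvable with $r=2$ (then $2g-2+k$ must equal $d$, i.e. $k=d-2g+2\geq 2g-1\geq 3$, and each of the $k$ points gets $q_j$ large with $p_j=q_j-1$... but then $n_j=1$, contribution $r-n_j=1$, total twist $k$, total degree $2(2g-2)+k$ — so actually $k=d-4g+4\geq 1$; one checks $k\geq 1$ iff $d\geq 4g-3$, matching the hypothesis exactly).

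Having fixed the arithmetic, the remaining steps are to invoke Theorem~\ref{theoB} verbatim: the chosen data satisfies all its hypotheses ($q_j\geq 3$, $\gcd(q_j,r)=1$, the negativity inequality), so $Bl(\check{M},\mathcal{P})$ carries an extremal Kähler metric of non-constant scalar curvature, and by the last sentence of Theorem~\ref{theoB} this metric is not small on every exceptional divisor — which is the final claim of Theorem~\ref{general}. The word "explicit" in the statement is then justified because the parabolic structure (the points $A_j$, which may be chosen freely on $\Sigma$, the weights $p_j/q_j$, and $r$) is written down explicitly in terms of $g$ and $d$ by the recipe above, and the unstability is inherited from the remark following Theorem~\ref{theoB} (the infinity section destabilizes). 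I expect the main obstacle to be the bookkeeping in the case $g\geq 2$, $d\geq 4g-3$: one must cover all residues of $d$ by juggling the number of marked points against the small-$q_j$ corrections while keeping every $q_j\geq 3$ and coprime to $r$, and also confirm that for $g=0,1$ the construction of Theorem~\ref{theoB} still produces a non-trivial blow-up (in genus $0$ and $1$ the minimal ruled surface and its parabolic structure must be handled slightly differently, and one should double-check that $\P(\mathcal{O}\oplus L)$ for the relevant $L$ genuinely occurs — in genus $0$, $L=\mathcal{O}(d)$, in genus $1$ any degree-$d$ line bundle). A secondary subtlety is making sure the isomorphism class of $\P(\mathcal{O}\oplus L)$ realized by $\mathcal{K}^r\otimes_j[A_j]^{r-n_j}$ can be matched to a \emph{prescribed} $L$ of the same degree; since the statement only asserts existence of parabolic structures on $M=\P(\mathcal{O}\oplus L)$ for the given $L$, one must either verify $\P(\mathcal{O}\oplus L)\cong\P(\mathcal{O}\oplus L')$ when $\deg L=\deg L'$ (false in general for $g\geq 1$), or — more carefully — observe that Theorem~\ref{theoB}'s construction only used properties of $L$ that depend on $\deg L$ and the ambient surface, and re-examine its proof to confirm it goes through for arbitrary $L$ of the admissible degree, which is the point at which one genuinely needs the hypothesis on $d$.
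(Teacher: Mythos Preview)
Your overall strategy and arithmetic match the paper's closely: the case split on $g$, the choice $r=1$ for $d=2g-2$ (using that $p_j=q_j-1$ gives $n_j=1$ and hence zero degree contribution), and the choice $r=2$ with $s=d-4g+4$ points for $d\geq 4g-3$ are exactly what the paper does, although the paper simply takes $q_j=3$ throughout rather than ``large $q_j$'' (for $r=2$ and any odd $q_j\geq 3$ one has $p_j=q_j-2$, $n_j=1$, contribution $1$). The genus $0$ and $1$ cases in the paper also use $r=2$ and $q_j=3$: for $g=0$ one takes $s=d+4\geq 4$ points, for $g=1$ one takes $s=d$ points.

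However, there is a genuine gap, and it is precisely the ``secondary subtlety'' you flag at the end. You correctly observe that $\P(\mathcal{O}\oplus L)\cong\P(\mathcal{O}\oplus L')$ for $\deg L=\deg L'$ is false in general when $g\geq 1$, so Theorem~\ref{theoB} as stated does \emph{not} apply to an arbitrary $L$ of the right degree --- it only produces the specific bundle $\mathcal{K}^r\otimes_j[A_j]^{r-n_j}$. Your proposed fix, to ``re-examine its proof to confirm it goes through for arbitrary $L$'', is the right instinct but is not developed, and this is where the paper does real additional work. The paper's resolution (its Proposition~\ref{propL}) is to go back to the orbifold construction and twist by a flat line bundle: given any degree-zero line bundle $L_0$ on $\Sigma$, one pulls back the corresponding $U(1)$-representation of $\pi_1(\Sigma)$ through the natural surjection $\pi_1^{orb}(\overline{\Sigma})\to\pi_1(\Sigma)$ to get a flat orbifold line bundle $L'$ on $\overline{\Sigma}$, and then runs the whole T\o{}nnesen-Friedman construction on $\P(\mathcal{O}\oplus(\mathcal{K}_{orb}^r\otimes L'))$ instead. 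Because the representation is trivial on the local isotropy generators $l_i$, the singularities are unchanged, the extremal metric still exists (the flat factor is metrically invisible), and the resolution identifies with a blow-up of $\P(\mathcal{O}\oplus(\mathcal{K}^r\otimes_j[A_j]^{r-n_j}\otimes L_0))$. Choosing $L_0=L\otimes(\mathcal{K}^r\otimes_j[A_j]^{r-n_j})^{-1}$ then realizes the prescribed $L$. This flat-twist step is the missing ingredient in your proposal; without it the argument only proves Theorem~\ref{general} for those $L$ that happen to be of the special form, which for $g\geq 1$ is a proper subset of the line bundles of the admissible degrees. Your early aside about ``a free choice of the auxiliary degree-zero part'' is pointing in exactly the right direction, but it needs to be implemented at the level of the orbifold metric construction, not as a post-hoc adjustment of Theorem~\ref{theoB}.
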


\begin{rmk}
 In fact, a combination of the results in \cite{s1} and \cite{ss} shows that the extremal K\"ahler metrics obtained by T\o{}nnesen-Friedman in \cite{tf}
lie exactly in the K\"ahler classes that give relatively stable polarizations. Thus the unstable parabolic structures obtained might be in fact
''relatively stable`` parabolic structures, in a sense that remains to be understood.
\end{rmk}

\subsection{Example}

Consider $\widetilde{\C\P^1\times\T^2}$ a three times iterated blow-up of the total space of the fibration
$$
\C\P^1\times\T^2 \rightarrow \T^2.
$$
The considered blow-up contains the following chain of curves:
$$
\xymatrix{
{}\ar@{-}[r]^{E_1}_{-2} & *+[o][F-]{}
\ar@{-}[r]^{E_2}_{-2} &  *+[o][F-]{}
\ar@{-}[r]^{E_3}_{-1} &  *+[o][F-]{}
\ar@{-}[r]^{ F}_{-3} & 
}
$$
Here $F$ is the proper transform of a fiber of the ruling 
$$
\C\P^1 \times \T ^2 \rightarrow \T^2 .
$$
$E_1$ , $E_2$ and $E_3$ are the exceptional divisors of the iterated blow-ups. Let $S_0$ be the 
proper transform of the zero section. Then :
\begin{theointro}
\label{cp1 t2}
For each $(a,b)\in\R_+^{*,2}$ such that $\frac{a}{b}<k_2$ where $k_2$
is a constant defined in \cite{tf} and for each $(a_1,a_2,a_3)$ positive numbers, there exists $\ep_0>0$
such that for every $\ep\in(0,\ep_0)$,
there exists an extremal metric $\om$ with non-constant scalar curvature on $\widetilde{\C\P^1\times\T^2}$.
This metric satisfies
$$
[\om]\cdot S_0=\frac{2}{3}\pi b,
$$
$$
[\om]\cdot F=\ep^2 a_1, \; [\om]\cdot E_1=\ep^2 a_2, \; [\om]\cdot  E_2=\ep^2 a_3
$$
and
$$
[\om]\cdot E_3=(b-a)\frac{\pi}{3}.
$$
\end{theointro}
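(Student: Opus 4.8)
The plan is to recognise $\widetilde{\C\P^1\times\T^2}$ as the iterated blow-up $Bl(\check M,\mathcal P)$ attached to the parabolic ruled surface $\check M=\P(\mathcal O\oplus\mathcal O)=\C\P^1\times\T^2$ over $\Si=\T^2$, with $\mathcal P$ the single marked point $Q$ on the infinity section of the ruling carrying weight $\tfrac{p}{q}=\tfrac13$, and then to run the construction underlying Theorem~\ref{general} in this explicit case while tracking the K\"ahler class (this is a particular instance of the construction proving Theorem~\ref{general}, with $g=1$, $d=0$; the extra content is the precise periods of $\om$). First one checks the combinatorics: the Hirzebruch--Jung expansion of $\tfrac{p}{q}=\tfrac13$ is $[\,3\,]$ and that of $\tfrac{q-p}{q}=\tfrac23$ is $[\,2,2\,]$, so $\Phi^{-1}(F)$ is the chain $-3,-1,-2,-2$, which is precisely $F,E_3,E_2,E_1$ read from $F$; in particular $F$ is the proper transform of the ruling fibre through $Q$ and $E_3$ the unique $-1$ curve, while $F$ on one side of $E_3$ and $E_2,E_1$ on the other are minimal resolutions of two cyclic quotient singularities, of types $A_{1,3}$ and $A_{2,3}$.

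Following Rollin--Singer, $Bl(\check M,\mathcal P)$ is the minimal resolution of an orbifold ruled surface $\bar{\mathcal M}=\P(\mathcal O\oplus\mathcal L)\rightarrow\Si_{\mathrm{orb}}$, where $\Si_{\mathrm{orb}}$ is the torus with one cone point of order $3$ over $A$, the two quotient singularities of $\bar{\mathcal M}$ lie on the fibre $\bar F$ over that cone point, and (after possibly interchanging the two poles of $\P^1$) $\mathcal L$ is an orbifold line bundle of positive orbifold degree --- the weighted elementary transformation at $Q$ turning $\deg\mathcal O=0$ into that positive degree, which is also the reason the infinity section destabilises $(\check M,\mathcal P)$. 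Since the orbifold Euler characteristic $\chi(\Si)-(1-\tfrac13)=-\tfrac23$ is negative, $\Si_{\mathrm{orb}}$ is hyperbolic and T\o{}nnesen-Friedman's momentum-profile construction applies in the orbifold category: for every $(a,b)\in\R_+^{*,2}$ with $\tfrac{a}{b}<k_2$ it produces an extremal orbifold K\"ahler metric $\bar\om$ on $\bar{\mathcal M}$ of non-constant scalar curvature, whose class is determined by $(a,b)$. Computing with the explicit momentum profile yields the periods $[\bar\om]\cdot S_0=\tfrac23\pi b$ on the zero section and $[\bar\om]\cdot\bar F=\tfrac\pi3(b-a)$ on the fibre over the cone point; since $E_3$ is the proper transform of $\bar F$ and the resolution is supported near the two quotient singularities, the latter gives $[\om]\cdot E_3=(b-a)\tfrac\pi3$.

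Next I would desingularise $\bar{\mathcal M}$ by gluing in, at each of the two order-$3$ quotient singularities, a scalar-flat K\"ahler ALE metric on the corresponding Hirzebruch--Jung resolution (the local models entering \cite{rs} and \cite{aps}), rescaled by a small parameter $\ep$. These ALE metrics occur in families realising any prescribed small K\"ahler class on the exceptional chain, so the areas of $F$, $E_1$, $E_2$ can be fixed to be exactly $\ep^2a_1$, $\ep^2a_2$, $\ep^2a_3$, whereas the area of $E_3$ stays at the order-one value $(b-a)\tfrac\pi3$ coming from $\bar\om$. This gives, for each small $\ep$, a K\"ahler metric $\om_\ep$ on $\widetilde{\C\P^1\times\T^2}$ in the class with the stated periods, approximately extremal with an error controlled by $\ep$ in suitable weighted norms. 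Finally one perturbs $\om_\ep$ inside its K\"ahler class to a genuine extremal metric: one writes the extremal equation modulo the finite-dimensional space spanned by the real holomorphic vector fields, bounds a right inverse of the linearised operator uniformly in $\ep$ on appropriate weighted H\"older spaces, and closes a fixed-point argument as in Arezzo--Pacard--Singer and Sz\'ekelyhidi \cite{aps,s1}; the perturbation preserves the K\"ahler class, so the stated periods persist, and the non-constancy of the scalar curvature --- together with the non-trivial extremal vector field needed to run this step --- is inherited from $\bar\om$.

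The main difficulty is this last gluing-and-perturbation step in the presence of a divisor, $E_3$, that is \emph{not} small: one cannot treat the whole exceptional locus of $\widetilde{\C\P^1\times\T^2}\rightarrow\C\P^1\times\T^2$ as a small neck attached to a fixed orbifold metric. The analysis must be arranged so that only the genuinely small chains $F$ and $E_1,E_2$ are inserted by ALE gluing at scale $\ep$, while $E_3$ persists at scale $O(1)$ as part of the background orbifold metric $\bar\om$. Designing the weighted function spaces in which the right inverse of the linearised scalar-curvature operator stays bounded as $\ep\to0$, and verifying that the obstruction space --- hence the extremal vector field --- depends continuously on the gluing, is the heart of the argument; the cohomological identities for $[\om]$ then follow from the bookkeeping indicated above.
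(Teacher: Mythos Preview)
Your approach is correct and coincides with the paper's: recognise $\widetilde{\C\P^1\times\T^2}$ as $Bl(\check M,\mathcal P)$, realise this as the minimal resolution of an orbifold ruled surface $\overline{M}=\P(\mathcal O\oplus\mathcal K_{orb})$ over the torus with one cone point of order $3$ (so $\chi^{orb}=-\tfrac23<0$), endow $\overline{M}$ with a T\o{}nnesen-Friedman extremal metric, and apply the general gluing theorem (Theorem~\ref{generalresult}) to resolve the two $A_{1,3}$ and $A_{2,3}$ singularities; the periods on $S_0$ and on the $-1$ curve $E_3$ are read off from the orbifold metric via the formulas in Section~\ref{classes}, and the areas of $F,E_1,E_2$ come from the scaled Joyce--Calderbank--Singer ALE metrics.

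A few minor remarks. First, the paper packages the entire ``main difficulty'' you describe in your last paragraph into Theorem~\ref{generalresult}, proved once and for all by adapting \cite{ap1} and \cite{aps} to the orbifold setting; so in the paper's logic the perturbation step is a one-line citation rather than something to be redone here. Second, your weight $\tfrac13$ on the infinity section is the mirror of the paper's convention (Theorem~\ref{theoB} would give $p\equiv -r=-1\equiv 2\ [3]$, hence weight $\tfrac23$ on the infinity section, equivalently $\tfrac13$ on the zero section, and indeed Section~5.2 places the first blown-up point on the \emph{zero} section); since $\check M=\P(\mathcal O\oplus\mathcal O)$ has an obvious symmetry exchanging the two sections, this is harmless, but it explains why your combinatorics look reversed relative to Theorem~\ref{theoB}. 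Third, you could be slightly more explicit that $\mathcal L=\mathcal K_{orb}$ (the case $r=1$ of Proposition~\ref{orbibundles}), which is what forces $L=\mathcal K\cong\mathcal O$ on $\T^2$ after the identification of Proposition~\ref{surface2}.
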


\subsection{Strategy}
The Theorem \ref{theoB} will be obtained from a general process. The first step is to consider K\"ahler orbifolds endowed with extremal metrics. 
Such orbifolds can be obtained from the work of Bryant \cite{b} and Abreu \cite{a} on weighted projective spaces. 
Legendre also provide examples in the toric case \cite{le}.
Other examples will come from the work of T\o{}nnesen-Friedman \cite{tf}, generalized to the orbifold setting. These orbifolds will have isolated Hirzebruch-Jung singularities.
The work of Joyce and then Calderbank and Singer enables us to endow a local model of resolution of these singularities with a scalar-flat K\"ahler metric \cite{cs}. 
Then the gluing method of Arezzo, Pacard and Singer \cite{aps} is used to glue these models to the orbifolds and obtain manifolds with extremal K\"ahler metrics.
Note that there exists an improvement of the arguments of \cite{aps} by Sz\'ekelyhidi in \cite{s}.

\begin{rmk}
The gluing process described in \cite {aps} works in higher dimension but there are no 
such metrics on local models of every resolution of isolated singularities in higher dimension. 
However, Joyce constructed ALE scalar-flat metrics on Crepant resolutions \cite{j}. 
Then one can expect to generalize the process described below in some cases of higher dimension.
\end{rmk}

\subsection{Second example}
Using this gluing method we obtain an other simple example.
Let us consider $\widetilde{\C\P^2} $ the three-times iterated blow-up of $\C\P^2$ with the following chain of curves :
$$\xymatrix{
{}\ar@{-}[r]^{H}_{-2} & *+[o][F-]{}
\ar@{-}[r]^{E_3}_{-1} &  *+[o][F-]{}
\ar@{-}[r]^{E_2}_{-2} &  *+[o][F-]{}
\ar@{-}[r]^{ E_1}_{-2} & 
} $$

Here $H$ denotes the proper transform of a line in $\C\P^2$ on which the first blown-up point lies. 
$E_1$, $E_2$ and $E_3$ stand for the proper transform of the first, second and last exceptional divisors. The dots represent the intersections and the numbers below the lines are the self-intersection numbers. Then we can state:

\begin{theointro}
\label{cp2}
For every $a,a_1,a_2,a_3$ positive numbers there exists $\ep_0>0$ such that for every $\ep\in(0,\ep_0)$, 
there is an extremal K\"ahler metric $\om_{\ep}$ of non-constant scalar curvature on $\widetilde{\C\P^2} $ satisfying
$$
[\om_{\ep}]\cdot H=\ep^2 a_3,
$$
$$
[\om_{\ep}]\cdot E_3=a, \; [\om_{\ep}]\cdot E_2=\ep^2 a_2
$$
and
$$
[\om_{\ep}]\cdot E_1=\ep^2 a_1.
$$
\end{theointro}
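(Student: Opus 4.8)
The plan is to realise $\widetilde{\C\P^2}$ as the minimal resolution of a K\"ahler orbifold that carries an extremal metric of non-constant scalar curvature, and then to produce the metrics $\om_\ep$ by gluing ALE scalar-flat K\"ahler local models onto that orbifold, as outlined in the Strategy. The orbifold is the weighted projective plane $\overline M=\C\P^2_{1,2,3}$: its only singular points are $[0:1:0]$, of type $\frac12(1,3)=\frac12(1,1)$ (an ordinary double point $A_1$), and $[0:0:1]$, of type $\frac13(1,2)$, whose minimal resolutions are a single $(-2)$-curve and a chain of two $(-2)$-curves respectively (Hirzebruch--Jung continued fraction $\frac32=2-\frac12$). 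The weighted coordinate line $L=\{x=0\}$ passes through both singular points and has $L^2=\frac16$; a short computation with the resolution data shows that its proper transform in the minimal resolution $X\to\overline M$ has self-intersection $-1$. Hence $X$ is precisely $\widetilde{\C\P^2}$: $H$ and $E_1,E_2$ are the exceptional curves over the two singular points, $E_3$ is the proper transform of $L$, and one reads off the chain $H(-2)$--$E_3(-1)$--$E_2(-2)$--$E_1(-2)$ displayed above. (Equivalently, successively contracting $E_3,E_2,E_1$ recovers $\C\P^2$ with $H$ the proper transform of a line through the first blown-up point, as in the statement.)

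Next I would assemble the analytic ingredients. By Bryant's Bochner--K\"ahler metrics on weighted projective spaces \cite{b} (equivalently, by the extremal toric metrics of Abreu and Legendre \cite{a,le}), $\C\P^2_{1,2,3}$ admits an extremal orbifold K\"ahler metric $\om_a$ in its one-dimensional K\"ahler cone, normalised by $[\om_a]\cdot L=a$ for any $a>0$; since the weights are not all equal, $\om_a$ has non-constant scalar curvature, and its extremal vector field lies in the maximal torus $T\subset\mathrm{Aut}(\overline M)$, which fixes both singular points. On the local side, the $A_1$ point carries the Eguchi--Hanson family of ALE scalar-flat K\"ahler metrics (one positive parameter, to be identified with $a_3$), and by the work of Joyce \cite{j} and of Calderbank--Singer \cite{cs} the minimal resolution of the $\frac13(1,2)$ singularity carries a two-parameter family of ALE scalar-flat K\"ahler metrics asymptotic to the flat cone $\C^2/\Ga$ (positive parameters $a_1,a_2$, one per exceptional curve). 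Feeding $\om_a$ and these local models, rescaled by $\ep$, into the Arezzo--Pacard--Singer gluing construction \cite{aps}---in the form adapted to resolutions of isolated orbifold singularities and with the refinement of Sz\'ekelyhidi \cite{s}---produces, for every sufficiently small $\ep>0$, an extremal K\"ahler metric $\om_\ep$ on $\widetilde{\C\P^2}$ of non-constant scalar curvature, whose K\"ahler class is $\Phi^*[\om_a]$ corrected by an $O(\ep^2)$ combination of $H,E_1,E_2$. Pairing this class with $H,E_1,E_2,E_3$ returns exactly the stated intersection numbers; in particular $[\om_\ep]\cdot E_3=a$ stays bounded away from $0$, which is why the metric is not small on every exceptional divisor.

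The step I expect to be the main obstacle is establishing that the gluing construction genuinely applies, i.e.\ that the obstruction to the perturbation argument vanishes. Because the two gluing points are zeros of the extremal vector field of $\om_a$, the simplest ``general position'' version of \cite{aps} is not available; instead one has to run the construction $T$-equivariantly and check a finite-dimensional solvability (``balancing'') condition linking the local models, their parameters $a_i$, and the linearised extremal operator of $\om_a$ at the two fixed points. For $\overline M=\C\P^2_{1,2,3}$, with only two singular points and a two-torus of symmetry, this should reduce to an explicit verification against the moment polytope of $\C\P^2_{1,2,3}$ and the parameters of the Eguchi--Hanson and Calderbank--Singer models, after which the existence of $\ep_0=\ep_0(a,a_1,a_2,a_3)>0$ follows. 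A secondary technical point is confirming that the polynomial decay of the Calderbank--Singer metrics toward the flat cone is compatible with the weighted Schauder spaces used in \cite{aps}; this is where their argument must be modified, but it is routine once the function spaces are in place.
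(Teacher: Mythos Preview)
Your approach is essentially the paper's own: resolve $\C\P^2_{1,2,3}$ with its Bryant--Abreu extremal metric and apply the gluing theorem for Hirzebruch--Jung resolutions (the paper's Theorem~\ref{generalresult}); the identification of the resolution with $\widetilde{\C\P^2}$ is done in the paper via toric fans rather than your direct self-intersection computation for the proper transform of $\{x=0\}$, but both arrive at the same chain of curves.

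Where you differ is in what you flag as the ``main obstacle''. In the paper's framework there is \emph{no} balancing or solvability condition to check. The point is Lemma~\ref{orbicontext}: one takes $T$ to be a \emph{maximal} torus in the group $K$ of hamiltonian isometries, and then the space $\h$ of $T$-invariant real-holomorphic hamiltonian vector fields equals $\t$ itself, so the obstruction space $\h''$ in the splitting $\h=\h'\oplus\h''$ is zero. The singular points are automatically $T$-fixed, the Joyce--Calderbank--Singer local models are $\T^2$-invariant by construction, and the extremal vector field lies in $\t$; hence Theorem~\ref{generalresult} applies unconditionally, for any choice of positive volumes $a_j$ on the exceptional curves. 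Your anticipated verification ``against the moment polytope'' is therefore unnecessary, and the decay of the Calderbank--Singer metrics needed for the weighted analysis is already supplied by the expansion quoted from \cite{rs} in Section~\ref{locmod}. Once you realise this, the proof collapses to: invoke Theorem~\ref{generalresult}, identify the resolution, and compute the intersection numbers as in Section~\ref{classes}.
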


\begin{rmk} If one starts with the first Hirzebruch surface endowed with the Calabi metric and use the work of
Arezzo, Pacard and Singer to construct extremal metrics on $\widetilde{\C\P^2}$, 
the K\"ahler classes obtained are of the form
$$
[\om_{\ep}]\cdot H=b,
$$
$$
[\om_{\ep}]\cdot E_3=\ep^2 a_3, \; [\om_{\ep}]\cdot E_2=\ep^2 a_2
$$
and
$$
[\om_{\ep}]\cdot E_1=a.
$$
with $a$ and $b$ positive real numbers and $\ep$ small enough .
The K\"ahler classes obtained with the new process can be chosen arbitrarily far from the one obtained 
by Arezzo, Pacard and Singer.
\end{rmk}

\subsection{Plan of the paper}
In the section~\ref{secHJsing} we set up the general gluing theorem for resolutions following \cite{ap1} and \cite{aps}.
In section~\ref{surfaces2} we build the orbifolds with extremal metrics that we use in the gluing construction, and identify the surfaces obtained after resolution. This will prove Theorem~\ref{theoB}.
Then in the section~\ref{theo D} we discuss unstable parabolic structures and give the proof of the Theorem~\ref{general}.
In the last section we show how to obtain the examples of the introduction.

\subsection*{Acknowledgments} 
I'd like to thank especially my advisor Yann Rollin for his help and encouragement. 
I am grateful to Paul Gauduchon and Michael Singer for all the discussions we had. 
I'd also like to thank Vestislav Apostolov who pointed to me Abreu's and Legendre's work. 
I thank Frank Pacard and Gabor Sz\'ekelyhidi for their remarks on the first version of this paper, as well as the referee whose comments enabled to improve the paper.
And last but not least a special thank to Andrew Clarke for all the time he spend listening to me and all the suggestions
he made to improve this paper.

\section{Hirzebruch-Jung singularities and extremal metrics}
\label{secHJsing}
The aim of this section is to present the method of desingularization of extremal K\"ahler orbifolds.

\subsection{Local model}
\label{locmod}
We first present the local model which is used to resolve the singularities.

\begin{dfn} 
\label{HJsing}
Let $p$ and $q$ be coprime non-zero integers, with $p<q$. Define the group $\Gamma_{p,q}$ to be the 
multiplicative subgroup of $U(2)$ generated by the matrix 
$$
\gamma := \left ( \begin{array} {cc}
 \exp \left ( \dfrac{2i \pi}{q} \right )& 0 \\
 0 & \exp \left (\dfrac{2i \pi p}{q} \right )
\end{array} \right )
$$
\end{dfn}
The group $\Gamma_{p,q}$ acts on $\C^2$ :
$$
 \forall (z_0,z_1)\in \C^2 , \gamma .(z_0,z_1):= \left ( \exp \left ( \dfrac{2i \pi}{q} \right ) \cdot z_0,\exp \left ( \dfrac{2i \pi p}{q} \right ) \cdot z_1 \right ).
$$
\begin{dfn}
Let $p$ and $q$ be coprime non zero integers, with $p<q$. An $A_ {p,q}$ singularity is a singularity isomorphic to $\C^2 / \Gamma_{p,q}$. A \textit{singularity of Hirzebruch-Jung type} is any singularity of this type.
\end{dfn}
 
We recall some results about the resolutions of these singularities. First, from the algebraic point of view, $\C^2/ \Gamma_{p,q}$ is a complex orbifold with an isolated singularity at $0$. There exists a minimal resolution 
$$
 \pi : Y_{p,q} \rightarrow \C^2 / \Gamma_{p,q}
$$
called the Hirzebruch-Jung resolution. The manifold $Y_{p,q}$ is a complex surface with exceptional divisor $E:= \pi ^{-1}(0)$ and $\pi$ is a biholomorphism from $Y_{p,q} - E $ to $\C^2-\{0\}/\Gamma$. For more details about resolutions see \cite{bpv}.
Next, $\C^2/ \Gamma_{p,q}$ and $Y_{p,q}$ are toric manifolds. The action of the torus $\T^2$ is the one that comes from the diagonal action on $\C^2$. For more details on this aspect of the resolution see \cite{f}.
Lastly, the minimal resolution can be endowed with an ALE scalar-flat K\"ahler metric $\om_r$ in each K\"ahler class as constructed by Joyce, Calderbank and Singer in \cite{cs}. 
The exceptional divisor of the resolution is the union of $\C\P^1$s embedded in $Y_{p,q}$ and the volume of each of these curves can be chosen arbitrarily.
This metric is $\T^2$-invariant and its behaviour at infinity is controlled:

\begin{prop}
 \textbf{(\cite{rs}, Corollary 6.4.2.)}
In the holomorphic chart 
$$\C^2-\{0\} / \Gamma_{p,q}$$ 
the metric $\om_r$ is given by $\om_r=dd^cf$, with 
$$
f(z)=\frac{1}{2}\vert z \vert^2 + a \, log(\vert z \vert ^2) + \mathcal{O} (\vert z \vert ^{-1})
$$
and $a\leq 0$.
\end{prop}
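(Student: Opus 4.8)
\emph{Proof strategy.} The plan is to build on the two features that $\om_r$ carries by construction: it is K\"ahler, asymptotically locally Euclidean with tangent cone at infinity the flat $\C^2/\Gamma_{p,q}$, and it is scalar-flat and $\T^2$-invariant. First one produces the potential. The end $(\C^2\setminus\{0\})/\Gamma_{p,q}$ retracts onto the rational homology sphere $S^3/\Gamma_{p,q}$, so its second real cohomology vanishes and $\om_r=dd^cf$ there for a smooth function $f$; comparing $\om_r$ with the flat metric and using the ALE decay, one normalises $f=\tfrac12|z|^2+g$ with $g=o(|z|^2)$. The assertion then becomes exactly $g=c+a\log(|z|^2)+\mathcal{O}(|z|^{-1})$ with $a\leq 0$, the additive constant $c$ being irrelevant to $\om_r$.

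For the expansion, the cleanest route is to use that the metrics of \cite{cs} are given explicitly: they are toric and arise from the linear construction of Joyce, refined in \cite{cs}, so the asymptotics of a K\"ahler potential can be read off directly. Conceptually, the leading term is $\tfrac12|z|^2$ because $\om_r$ is asymptotic to the flat metric on the cone $\C^2/\Gamma_{p,q}$, and the first correction is $a\log(|z|^2)$ because this is the unique radial K\"ahler potential whose $dd^c$ has nonzero trace decaying like $|z|^{-2}$, which is precisely the decay rate of $\om_r-\om_{\mathrm{eucl}}$ (its coefficient $a$ is, up to a fixed constant, the ALE mass of $(Y_{p,q},\om_r)$). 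The $\T^2$-symmetry reduces the remaining estimate on the end to an ODE analysis, and the scalar-flat equation together with weighted elliptic estimates then confines the rest to $\mathcal{O}(|z|^{-1})$.

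The crux, which I expect to be the main obstacle, is the sign $a\leq 0$, and I would get it from a global identity rather than from local analysis. Scalar-flatness means that $\rho_r\wedge\om_r=0$ as a $4$-form ($\rho_r$ the Ricci form), hence $\int_{Y_{p,q}}\rho_r\wedge\om_r=0$. Running the usual compact-surface computation of $\int\rho\wedge\om$ over a large ball, with $\rho_r=-dd^c\log\det(\partial_i\partial_{\bar j}f)$ on the end, Stokes' theorem leaves a boundary term and gives $2\pi\,c_1(Y_{p,q})\cdot[\om_r]=\lim_{R\to\infty}\int_{\{|z|=R\}}d^c\log\det(\partial_i\partial_{\bar j}f)\wedge\om_r$, where $c_1(Y_{p,q})\cdot[\om_r]$ is the well-defined pairing coming from the compactly supported class of $\rho_r$; since $\log\det(\partial_i\partial_{\bar j}f)$ equals a constant plus a fixed multiple of $a\,|z|^{-2}$ plus faster-decaying terms, the boundary term is a fixed nonzero multiple of $a$, so $a$ is a fixed multiple of $c_1(Y_{p,q})\cdot[\om_r]$ (this is a mass-type formula of the kind studied by LeBrun). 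Finally, the exceptional curves $C_i$ of the Hirzebruch--Jung resolution have $C_i^2=-d_i$ with $d_i\geq 2$, so $c_1(Y_{p,q})\cdot C_i=2-d_i\leq 0$; as the intersection matrix of a chain is negative definite with entrywise nonpositive inverse, the coefficients of $c_1(Y_{p,q})$ in the basis dual to the $C_i$ are nonnegative, and pairing with $[\om_r]$, which has positive integral $\mathrm{vol}_{\om_r}(C_i)$ over each $C_i$, yields $c_1(Y_{p,q})\cdot[\om_r]\geq 0$. Comparing signs gives $a\leq 0$, with equality exactly when all $d_i=2$, i.e.\ for the $A_n$-singularities, where $\om_r$ is hyperk\"ahler and $a=0$: the expected consistency check. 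The one genuinely delicate step is fixing the sign of the universal constant in the boundary term, which is the content of LeBrun's mass formula for ALE scalar-flat K\"ahler surfaces and can equally be verified on the explicit solution of \cite{cs}.
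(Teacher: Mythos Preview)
The paper does not prove this proposition at all: it is quoted verbatim as Corollary~6.4.2 of \cite{rs} and used as a black box. There is thus no ``paper's own proof'' to compare against; the result is imported from Rollin--Singer, where it is extracted directly from the explicit Joyce--Calderbank--Singer construction in \cite{cs}.

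Your sketch is nonetheless a coherent argument, and it differs in spirit from the route in \cite{rs}. There the expansion and the sign of $a$ come straight out of the toric construction: the Joyce--Calderbank--Singer metrics are written explicitly in terms of the affine/toric data, and the asymptotic potential is read off by expanding those formulae at infinity, with $a\leq 0$ dropping out of the explicit expression (and $a=0$ exactly in the hyperk\"ahler $A_n$ case). Your approach replaces that computation by a mass-type identity \`a la LeBrun, linking $a$ to $c_1(Y_{p,q})\cdot[\om_r]$ and then determining the sign of the latter via adjunction on the exceptional curves together with the sign pattern of the inverse of the Hirzebruch--Jung intersection matrix. This is more conceptual and would apply to any ALE scalar-flat K\"ahler metric on $Y_{p,q}$, not only the specific ones of \cite{cs}; the cost is exactly the point you flag, namely pinning down the sign of the universal constant in the boundary term, which in practice is most safely fixed by comparison with the explicit solution anyway.
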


\subsection{The gluing method}
The gluing method presented here comes from \cite{aps}.
Let $(M,J,\om)$ be a K\"ahler orbifold with extremal metric. Suppose that the singular points of $M$ are isolated and of Hirzebruch-Jung type.
Denote by $p_i$ the singular points of $M$ and $B(p_i,\ep):=B(p_i,\ep)/\Gamma_i$ orbifold balls around the singularities of radius $\ep$ with respect to the metric of $M$.
Fix $r_0>0$ such that the $B(p_i,\ep)$ are disjoint for $\ep<r_0$. 
Consider, for $0<\ep< r_0$, the manifold $M_{\ep}:=M-\cup B(p_i,\ep)$ .
Let $Y_i$ stand for a local model of the resolution of the singularity $p_i$, endowed with the metric of 
Joyce-Calderbank-Singer.
The aim is to glue the $Y_i$ to $M_{\ep}$ in order to obtain a smooth K\"ahler manifold $\widetilde{M}$ which resolves $M$ and has an extremal K\"ahler metric. To do this, one needs to perturb the K\"ahler potentials of the metrics to make them agree on the boundaries of the different pieces, keeping the extremal condition on these potentials. If we consider small enough $\ep$, the metric will look like the euclidian metric in holomorphic chart because it is K\"ahler. On the other hand, the Joyce-Calderbank-Singer metric is ALE so one can hope to glue the metrics together with a slight perturbation. 

Let $s$ be the scalar curvature of $\om$. 
Define the operator :
$$
\begin{array}{llll}
 P_{\om} : & C^{\infty}(M) & \rightarrow & \Lambda^{0,1}(M,T^{1,0}) \\
  & f & \mapsto  &\frac{1}{2} \delb \Xi f
\end{array}
$$
with 
$$
\Xi f = J\nabla f +i \nabla f.
$$
A result of Calabi asserts that a metric is extremal if and only if the gradient field of the scalar curvature is a real holomorphic vector field.
Therefore a metric $\om'$ is extremal if and only if $P_{\om'}(s(\om'))=0$, with $s(\om')$ denoting the scalar curvature.
Let $P_{\om}^*$ be the adjoint operator of $P_{\om}$.
We will use the following proposition:

\begin{prop} \textbf{\cite{li}} 
\label{li}
$\Xi \in T^{1,0} $ is a Killing vector field with zeros if and only if there is a real function 
$f$ solution of $P_{\om}^*P_{\om}(f)=0$ such that $\om(\Xi,.)=-df$.
\end{prop}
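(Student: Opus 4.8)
The plan is to split the equivalence into a purely analytic part and a geometric part. First I record the identity $\Xi f = J\nabla f + i\nabla f$ built into the definition of $P_{\om}$: this expression is, up to a nonzero constant factor, the $(1,0)$-component of $\nabla f$, so applying $\delb$ one sees that the equation $P_{\om}f = 0$ asserts exactly that the $(1,0)$-vector field $\Xi f$ is holomorphic. Since $M$ is compact (an orbifold, equipped with its orbifold volume form) and $P_{\om}^{*}$ is the formal $L^{2}$-adjoint of $P_{\om}$, integration by parts gives $\langle P_{\om}^{*}P_{\om}f,f\rangle = \Vert P_{\om}f\Vert^{2}$; hence $P_{\om}^{*}P_{\om}f = 0$ is equivalent to $P_{\om}f = 0$, i.e. to $\Xi f$ being a holomorphic vector field. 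This disposes of the fourth-order operator entirely, and reduces the statement to a correspondence between holomorphy of $\Xi f$ and the Killing property.

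It remains to match "$\Xi f$ holomorphic'' with the Killing condition. Unwinding $\om = g(J\,\cdot,\cdot)$, the relation $\om(\Xi,\cdot) = -df$ is equivalent to $\Xi = J\nabla f$ as a real vector field, and then $\Xi^{1,0} = \tfrac12(\Xi - iJ\Xi) = \tfrac12(J\nabla f + i\nabla f)$ is a constant multiple of $\Xi f$; thus $\Xi$ is a real holomorphic vector field ($\mathcal{L}_{\Xi}J = 0$) precisely when $\Xi f$ is holomorphic. In one direction: if $\Xi f$ is holomorphic then $\Xi = J\nabla f$ preserves $J$, while $\iota_{\Xi}\om = -df$ is exact, so $\mathcal{L}_{\Xi}\om = 0$; preserving both $J$ and $\om$ forces $\mathcal{L}_{\Xi}g = 0$, so $\Xi$ is Killing, and it vanishes at any critical point of $f$ — which exists since $M$ is compact — hence has zeros. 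Conversely, if $\Xi$ is Killing with a zero, one uses that a Killing field on a compact K\"ahler orbifold is automatically holomorphic (the identity component of the isometry group lies in that of the automorphism group), so $\mathcal{L}_{\Xi}\om = 0$ and $\iota_{\Xi}\om$ is a closed $1$-form; since $\Xi$ has a zero this form is exact (a symplectic vector field with a zero on a compact manifold is Hamiltonian), say $\iota_{\Xi}\om = -df$ with $f$ real. Then $\Xi = J\nabla f$, and $\Xi$ being holomorphic makes $\Xi^{1,0}$, hence $\Xi f$, holomorphic, so $P_{\om}^{*}P_{\om}f = 0$ by the first step. Since adding a constant to $f$ alters neither $df$ nor $P_{\om}^{*}P_{\om}f$, the normalizations in the two directions are compatible.

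The two ingredients doing the real work are classical: that a Killing field on a compact K\"ahler orbifold preserves $J$, and that a closed basic $1$-form $\iota_{\Xi}\om$ arising from a symplectic vector field with a zero is exact. I expect the latter to be the step requiring the most care; its usual proof passes through the torus obtained as the closure of the flow of $\Xi$ (which necessarily fixes the zero of $\Xi$), or equivalently through Hodge theory applied to the closed form $\iota_{\Xi}\om$, and one should check these arguments survive the passage to orbifolds. Everything else is bookkeeping with the conventions relating $\Xi$, $J\nabla f$ and $\Xi f$, plus the elementary integration-by-parts identity for $P_{\om}^{*}P_{\om}$.
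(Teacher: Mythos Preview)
The paper does not give its own proof of this proposition: it is quoted from Lichnerowicz \cite{li}, with only the remark immediately following it that the argument extends to orbifolds with isolated singularities by working equivariantly in the orbifold charts. Your outline is precisely the classical argument one finds in Lichnerowicz (or in later expositions such as LeBrun--Simanca or Gauduchon's book), and it is correct; in particular the two nontrivial ingredients you isolate --- that a Killing field on a compact K\"ahler manifold is automatically real holomorphic, and that a symplectic Killing field with a zero is Hamiltonian via the torus obtained as the closure of its flow --- are exactly the substantive content of the result, and both pass to the orbifold setting for the reason the paper indicates.
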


This result is initially proved for manifolds but the proof extends directly to orbifolds with isolated singularities, working equivariently in the orbifold charts.

A result of Calabi (\cite{c2}) states that the isometry group of an extremal metric is 
a maximal compact subgroup of the group of biholomorphisms of the manifold. 
Thus in the gluing process we can prescribe a compact subgroup $T$ of the group of biholomorphisms of $M$ 
to become a subgroup of the isometry group of $\widetilde{M}$ and work 
$T$-equivariantly.
We want this group $T$ to be contained in the isometry group of $M$ because the metric that will be obtained on 
$\widetilde{M}$ will be near to the one on $M$ away from the exceptional divisors. Moreover, its algebra must contain the 
extremal vector field of $\om$ for the same reason.
Let $K$ be the sugroup of $Isom(M,\om)$ consisting of exact symplectomorphisms.
Let $T$ be a compact subgroup of $K$ such that its Lie algebra $\mathfrak{t}$ contains $X_s:=J\nabla s$, the extremal 
vector field of the metric.
Let $\h$ be the Lie algebra of real-holomorphic hamiltonian vector fields which are $T$-invariant.
These are the vector fields that remains in the $T$-equivariant setup.
$X_s$ is contained in $\h$. $\h$ splits as $\h' \oplus \h''$ with $\h'=\h \cap \mathfrak{t}$.
The deformations of the metric must preserve the extremal condition so we consider deformations 
$$
 f\longmapsto \om_e + i\del \delb f
$$
such that
\begin{equation}
\label{extremcond}
-ds(\om+ i \del \delb f) = (\om +i \del \delb f) ( X_s+ Y, \: .\: ) \quad 
\end{equation}
with $Y \in \h'$.
As $X_s + Y \in \h'$, the proposition~\ref{li} of Lichnerowicz above ensures that these deformations are extremal.
Moreover, the vector fields from $\h'$ are precisely the ones that give extremal deformations.

In order to obtain such deformations, consider the moment map $\xi_{\om}$ associated to the action of $K$:
$$
\xi_{\om} : M \rightarrow \k^*.
$$
$\xi_{\om}$ is defined such that for every $X \in \k$ the function $\langle \xi_{\om},X\rangle$ on $M$ is a 
hamiltonian for $X$, wich means 
$$
\om(X,\: . \:)= - d\langle \xi_{\om},X\rangle.
$$
Moreover, $\xi_{\om}$ is normalized such that
$$
\int_M \langle \xi_{\om},X\rangle \,\om^n =0.
$$
The equation (\ref{extremcond}) can now be reformulated:
$$
s(\om+i\del \delb f)= \langle \xi_{\om + i \del \delb f} , X_s+Y\rangle + constant .
$$ 
If we work $T$-equivariantly, we are interested in the operator :
$$
\left .
\begin{array}{llll}
 F: &  \h \times C^{\infty}(M)^T \times \R & \rightarrow & C^{\infty}(M)^T \\
  & (X,f,c) & \mapsto  &s(\om+i\del \delb f)- \langle \xi_{\om + i \del \delb f} , X \rangle -c-c_s
\end{array}
\right .
$$
where $C^{\infty}(M)^T$ stands for the $T$-invariant functions and $c_s$ is the average of the scalar curvature of $\om$.
There is a result which is due to Calabi and Lebrun-Simanca:

\begin{prop}\textbf{\cite{aps}} If $\om$ is extremal and if $X_s \in \h$, then the linearization of $F$ at $0$ is given by 
$$(f,X,c) \mapsto-\dfrac{1}{2}P_{\om}^*P_{\om}f- \langle \xi_{\om } , X \rangle -c.$$
\end{prop}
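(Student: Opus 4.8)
The statement is the classical Calabi–Lebrun–Simanca linearization formula, so the plan is simply to compute the Fréchet derivative of $F$ at $(X,f,c)=(0,0,0)$ term by term. Write $\om_f := \om + i\del\delb f$ and differentiate $F(X,f,c) = s(\om_f) - \langle \xi_{\om_f},X\rangle - c - c_s$ along the path $t\mapsto (tX,tf,tc)$. The calculation splits into three independent pieces, corresponding to the three arguments, because the map is affine in $c$ and, to first order at the extremal base point, the cross terms drop out.

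First I would handle the scalar-curvature term. The linearization of $f\mapsto s(\om+i\del\delb f)$ at a general Kähler metric is the well-known formula $-\tfrac12 \mathcal{L}^*\mathcal{L} f + \tfrac12\langle \nabla s, \nabla f\rangle$ (real inner product), where $\mathcal{L} = \delb\circ(\text{grad}^{1,0})$ is the Lichnerowicz operator; here $\mathcal{L}^*\mathcal{L}$ is exactly $P_{\om}^*P_{\om}$ in the paper's notation since $P_\om f = \tfrac12\delb\,\Xi f$. Second, the term $-\langle \xi_{\om_f}, X\rangle$: since $X$ enters linearly and is paired with $\xi_{\om_f}$, its derivative at $t=0$ has two contributions — the derivative of $\xi$ in the direction $f$ paired with the fixed $X$, plus $\xi_\om$ paired with the derivative of $tX$, which is $X$. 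The first contribution, $\langle \tfrac{d}{dt}\xi_{\om_f}, X\rangle$, must be shown to cancel the extra $+\tfrac12\langle\nabla s,\nabla f\rangle$ coming from the scalar-curvature term: this is where extremality of $\om$ and the hypothesis $X_s\in\h$ are used. The point is that the variation of the moment map satisfies $\tfrac{d}{dt}\big|_0\langle\xi_{\om_f},X\rangle = \tfrac12\langle\nabla\langle\xi_\om,X\rangle,\nabla f\rangle$ (plus a normalization term that integrates to zero and can be absorbed), and when $X = X_s$ this is precisely $\tfrac12\langle\nabla s,\nabla f\rangle$ because $\langle\xi_\om,X_s\rangle = s - c_s$ for an extremal metric. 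By $T$-equivariance and density/continuity this identity for $X=X_s$ propagates to give the cancellation in general, since only the component of $X$ along $X_s$ interacts with the $f$-variation at first order. Third, the derivative of $-c-c_s$ in the direction $c$ is just $-c$, and $c_s$ is a constant so contributes nothing.

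Assembling: the $-\tfrac12 P_\om^*P_\om f$ survives from the scalar-curvature term, the $\tfrac12\langle\nabla s,\nabla f\rangle$ is killed against the $f$-variation of the moment-map pairing, the $X$-variation of the pairing gives $-\langle\xi_\om,X\rangle$, and the $c$-variation gives $-c$; hence $DF_0(f,X,c) = -\tfrac12 P_\om^*P_\om f - \langle\xi_\om,X\rangle - c$, as claimed. The main obstacle is the bookkeeping around the moment-map variation — one needs the formula for $\tfrac{d}{dt}\xi_{\om_f}$ under a Kähler deformation together with the normalization $\int_M\langle\xi_{\om_f},X\rangle\om_f^n = 0$, and one must be careful that the extra terms produced by differentiating the volume form $\om_f^n$ and enforcing this normalization really are constants (so that they are harmless, being absorbed into $c$) rather than genuine obstructions. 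Everything else is a direct application of the standard variational formulas for scalar curvature on a Kähler manifold, carried out in orbifold charts exactly as in \cite{aps}.
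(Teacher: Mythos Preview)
The paper does not supply its own proof here --- the proposition is simply quoted from \cite{aps} (and attributed to Calabi and Lebrun--Simanca) --- so there is no argument in the paper to compare against. I will only assess your sketch.

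There is a real gap in your product-rule step. Along the path $t\mapsto(tX,tf,tc)$ the moment-map term is $-\langle\xi_{\om_{tf}},\,tX\rangle$; differentiating at $t=0$ gives only $-\langle\xi_\om,X\rangle$, because the factor $tX$ vanishes at $t=0$ and kills the ``$f$-variation of $\xi$ paired with $X$'' contribution you invoke. Hence at the base point $(0,0,0)$ there is no cross-term available to absorb the $\tfrac12\langle\nabla s,\nabla f\rangle$ left over from the scalar-curvature linearization. (Equivalently: $f\mapsto \tfrac{d}{dt}\big|_0\,s(\om_{tf})$ is not self-adjoint when $\nabla s\neq 0$, whereas $-\tfrac12 P_\om^*P_\om$ is, so the two cannot coincide at $(0,0,0)$.) Your attempted fix --- taking ``$X=X_s$'' and then propagating ``by density/continuity'' --- conflates the \emph{direction} of differentiation with the \emph{base value} of the $X$-variable; the claimed identity has to hold for every direction $X\in\h$, and nothing in your argument forces that.

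The correct mechanism is to linearize at the actual zero of $F$, namely $(X_s,0,0)$: one has $F(X_s,0,0)=s(\om)-\langle\xi_\om,X_s\rangle-c_s=0$ precisely because $\om$ is extremal with $\langle\xi_\om,X_s\rangle=s-c_s$. There the moment-map term reads $-\langle\xi_{\om_f},X_s+X\rangle$, and its $f$-derivative now genuinely contains $-\tfrac{d}{dt}\big|_0\langle\xi_{\om_{tf}},X_s\rangle$; using that $X_s$ is holomorphic with Hamiltonian $s-c_s$ and that $f$ is $T$-invariant, this equals $-\tfrac12\langle\nabla s,\nabla f\rangle$ up to an additive constant, which is exactly what cancels the non-self-adjoint remainder from the scalar-curvature side (the constant is absorbed into $c$). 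The phrase ``linearization at $0$'' in the paper should be read as ``at the base solution''; once the base point is corrected, the rest of your outline is fine.
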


The vector fields in $\h'$ are the ''good ones" as they will give perturbations in the isometry group of the future metric. 
It remains to use the implicit function theorem to get the scalar curvature in term of these vector fields and the algebra $\h''$ stands for the obstruction. 

\subsection{Extremal metrics on resolutions}
\label{perturb}

We choose a group $T$ of isometries of $M$ so that working $T$-equivariantly will simplify the analysis. 
It is necessary to choose a neighborhood of the singularities in which $T$ will appear as a subgroup of the 
isometry group of the metric of Joyce-Calderbank-Singer. 
Moreover, in order to lift the action of $T$ to $\widetilde{M}$, 
it is necessary that $T$ fixes the singularities. 
We will see in lemma \ref{orbicontext} that if we let $T$ be a maximal torus in $K$ then these conditions will be satisfied.
Thus the equivariant setup of \cite{aps} can be used all the same in 
this orbifold case. Lastly, we will have $\h = \t $ and no obstruction will appear in the analysis. 
Following \cite{ap1} and \cite{aps}, we can state the theorem:

\begin{theo}
\label{generalresult}
Let $(M,\om)$ be an extremal K\"ahler orbifold of dimension $2$. 
Suppose that the singularities of $M$ are isolated and of Hirzebruch-Jung type.
Denote by $\pi: \widetilde{M} \rightarrow M$ the minimal resolution of $M$ obtained using the Hirzebruch-Jung strings.
Denote by $E_j$ the $\C\P^1$s that forms the Hirzebruch-Jung strings in the resolution.
Then for every choice of positive numbers $a_j$ 
there exists $\ep_0>0$ such that $\forall \ep \in (0,\ep_0)$ there is an extremal K\"ahler metric on 
$\widetilde{M}$ in the K\"ahler class 
$$
[\pi^*\om] - \ep^2 \sum_j a_j PD[E_j]
$$
\end{theo}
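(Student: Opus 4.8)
The plan is to follow the gluing strategy of Arezzo--Pacard--Singer \cite{aps}, adapting it to the orbifold setting as sketched in the preceding subsections. The essential point is that, although $M$ is only an orbifold, its singularities are isolated and of Hirzebruch-Jung type, so near each $p_j$ the space looks like $\C^2/\Gamma_{p_j,q_j}$ and the minimal resolution is modeled on $(Y_{p_j,q_j},\om_r)$ carrying the Joyce--Calderbank--Singer scalar-flat ALE metric. Thus the whole analytic apparatus of \cite{aps} --- the weighted H\"older spaces on $M_\ep$ and on the rescaled resolutions, the mapping properties of $P_\om^*P_\om$ on each piece, and the Cauchy-data matching on the annular gluing regions --- goes through verbatim once one works $T$-equivariantly in the orbifold charts, using that Proposition~\ref{li} extends to orbifolds with isolated singularities.

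First I would fix $T$ to be a maximal torus in $K$, the group of exact symplectomorphisms of $(M,\om)$; by the lemma referred to as \ref{orbicontext} this $T$ fixes the singular points $p_j$ and appears, in a suitable neighbourhood of each $p_j$, as a subgroup of the isometry group of the ALE metric $\om_r$ on $Y_{p_j,q_j}$ (recall $\om_r$ is $\T^2$-invariant). Since $\h=\t$ in this situation, the space $\h''$ of obstructions vanishes and the operator $F$ from the previous subsection becomes, after linearization at $0$, the surjective-modulo-constants operator $(f,X,c)\mapsto -\tfrac12 P_\om^*P_\om f-\langle\xi_\om,X\rangle-c$. Next I would construct the approximate solution: excise orbifold balls $B(p_j,\ep)$ to form $M_\ep$, rescale each $Y_{p_j,q_j}$ by $\ep$ so that the volume of the exceptional curves $E_j$ becomes of order $\ep^2 a_j$, and glue the K\"ahler potentials $\tfrac12|z|^2$ (from $M$, in a holomorphic chart around $p_j$) and $\tfrac12|z|^2+a\log|z|^2+\mathcal{O}(|z|^{-1})$ (from $\om_r$) across the annulus using cut-off functions, obtaining a closed $(1,1)$-form $\om_\ep$ on $\widetilde M$ which is genuinely K\"ahler for $\ep$ small and whose class is $[\pi^*\om]-\ep^2\sum_j a_j\,\mathrm{PD}[E_j]$.

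Then I would set up the nonlinear equation $F_\ep(f,X,c)=0$ for the extremal deformation of $\om_\ep$, estimate the error term $F_\ep(0,0,0)$ in the weighted norm (it is small in $\ep$ because the two model metrics osculate to the Euclidean metric to the relevant order), construct a right inverse for the linearized operator with norm bounded uniformly in $\ep$ --- this is where the $T$-equivariance and the absence of obstructions are used, so that the cokernel is just the constants/Hamiltonians one has deliberately built in --- and conclude by a fixed-point/implicit-function argument that for $\ep\in(0,\ep_0)$ there is a genuine solution, i.e.\ an extremal metric in the prescribed class. The main obstacle is the uniform bound on the right inverse of $P_{\om_\ep}^*P_{\om_\ep}$ as $\ep\to0$: the kernel of $P_\om^*P_\om$ on $M$ and of its analogue on the ALE pieces must be understood and matched correctly in the weighted spaces, and one must check that the a priori degeneration of the estimate as the neck pinches is absorbed by the equivariant reduction; but since $\h=\t$ here, no genuine obstruction survives and the argument of \cite{ap1,aps} applies with only cosmetic changes.
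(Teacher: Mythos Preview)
Your proposal is correct and takes essentially the same approach as the paper: the paper does not spell out a proof either, but simply refers to \cite{aps} for the extremal gluing argument and to \cite{ap1} for the orbifold adaptation, stating that the only genuinely new ingredient is Lemma~\ref{orbicontext} ensuring that the maximal torus $T$ fixes the singularities, contains the extremal vector field, and embeds in the local $\T^2$ so that the $T$-equivariant analysis of \cite{aps} can be run on the ALE pieces. Your sketch unpacks exactly this strategy and correctly identifies $\h=\t$ (hence $\h''=0$) as the reason no balancing obstruction appears.
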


\begin{rmk}
We notice that if $s(\om)$ is not constant, the metrics obtained on the resolution are extremal 
of non-constant scalar curvature. 
Indeed, the metrics converge to $\pi^*\om$ away from the exceptional divisors, so does the scalar curvature.
On the other hand, if $\om$ is of constant scalar curvature, then the extremal metric obtained on the
resolution need not be of constant scalar curvature. Genericity and balancing conditions have to be satisfied
to preserve a CSC metric \cite{ap2} and \cite{rs2}.
\end{rmk}

\begin{rmk}
The proof is the one in \cite{aps} so we refer to this text. 
The tools and ideas are used here in an orbifold context, using the work of \cite{ap1}.
In the paper \cite{ap1} the gluing method is developped in the orbifold context for constant scalar curvature metrics.
On the other hand, the paper \cite{aps} deals with extremal metrics but in the smooth case.
One of the differences in the analysis between the constant scalar curvature case and the extremal case is that one needs
to lift objects to the resolution such as holomorphic vector fields.
Thus we will only give a proof of the following lemma which ensures that we can lift the vector fields needed 
during the analysis.
\end{rmk}

\begin{lemma}
\label{orbicontext}
Let $(M,\om)$ be an extremal K\"ahler orbifold of dimension $2$. 
Suppose that the singularities of $M$ are isolated and of Hirzebruch-Jung type. 
Let $K$ be the subgroup of $Isom(M,\om)$ which are exact symplectomorphisms.
Let $T$ be a maxixmal torus in $K$. 
Then $T$ fixes the singularities.
Its Lie algebra contains the extremal vector field.
Moreover at each singularity of $M$ with orbifold group $\Gamma$ there exists an orbifold chart $U/\Gamma$, $U\subset \C^2$
such that 
in this chart $T$ appears as a subgroup of the torus
acting in the standard way on $\C^2$.
\end{lemma}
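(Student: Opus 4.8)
The plan is to establish the three assertions in turn, all by exploiting that an extremal metric on $M$ lets us replace the full biholomorphism group by its isometry group, and that near an isolated quotient singularity everything can be linearized. First I would recall Calabi's theorem, already quoted in the excerpt: the isometry group $\mathrm{Isom}(M,\om)$ of an extremal K\"ahler metric is a maximal compact subgroup of the group of biholomorphisms, and the proof (working equivariantly in orbifold charts, exactly as noted after Proposition~\ref{li}) goes through for orbifolds with isolated singularities. Consequently the identity component $\mathrm{Isom}_0(M,\om)$ is reductive and $K$, the subgroup of exact symplectomorphisms, is a compact subgroup whose Lie algebra consists of hamiltonian Killing fields; a maximal torus $T\subset K$ is then a compact torus acting on $M$ by biholomorphic isometries. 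That $\mathfrak t$ contains the extremal vector field $X_s=J\nabla s$ is the key normalization step: $X_s$ is a Killing field (Calabi), it is hamiltonian since $\om(X_s,\cdot)=-ds$ up to the additive normalization of the moment map, hence $X_s$ lies in the Lie algebra of $K$; moreover $X_s$ is central in that algebra because $s$ is invariant under all isometries, so $X_s$ lies in every maximal torus, in particular in $\mathfrak t$. This simultaneously gives $\h=\t$ in the situation at hand, since on a complex surface the reductive algebra of $T$-invariant real-holomorphic hamiltonian fields, when $T$ is already a maximal torus of $K$, collapses to $\t$ itself.

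Next I would show $T$ fixes each singular point $p_i$. Since the singularities are isolated, the finite set $\{p_i\}$ of singular points of $M$ is preserved by every biholomorphism, hence by $T$; as $T$ is connected, it must fix each $p_i$ pointwise. This is the cheap part of the argument.

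The substantive step is the local normal form. Fix a singular point $p$ with orbifold group $\Gamma=\Gamma_{p,q}\subset U(2)$ and an orbifold uniformizing chart $\widetilde U\subset\C^2$ with $\widetilde U/\Gamma\cong U\ni p$, the origin mapping to $p$. The action of $T$ near $p$ lifts, after possibly shrinking $\widetilde U$, to an action of a cover of $T$ on $\widetilde U$ fixing $0$ (this is the standard lifting of a group action through an orbifold chart; here one uses that $\widetilde U\to U$ is the universal cover of $U\setminus\{p\}$ completed at the origin, so a connected group fixing $p$ lifts uniquely once we fix the lift of the identity). The differential at $0$ gives a homomorphism from (the cover of) $T$ into $GL(2,\C)$ whose image commutes with $\Gamma$; since the $T$-action is isometric for the K\"ahler metric, which osculates the flat metric to second order at $0$, one can choose the chart so that $T$ acts \emph{linearly} — this is Bochner's linearization theorem for a compact group fixing a point, applied equivariantly to the $\Gamma$-cover, averaging the chart over $T$ to straighten the action. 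So the image lies in $U(2)$, is a torus, and commutes with $\gamma$. Because $\Gamma_{p,q}$ is generated by a diagonal matrix with distinct eigenvalues (for $q\geq 3$, $1\neq\exp(2i\pi/q)\neq\exp(2i\pi p/q)$, and the degenerate small cases are handled by hand), its centralizer in $U(2)$ is exactly the diagonal torus; hence a maximal torus of $K$ lands in the standard diagonal $\T^2\subset U(2)$ acting on $\C^2$ coordinatewise, which is the claim. I expect this linearization-plus-centralizer argument to be the main obstacle: one must be careful that the lift through the orbifold chart exists and is unique for the connected group, and that Bochner linearization can be run $\Gamma$-equivariantly so that the resulting linear chart is still a genuine orbifold chart; the eigenvalue bookkeeping that pins the centralizer down to the diagonal torus, including the finitely many low-$q$ exceptions, is then routine.

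Finally, with $T$ realized in the standard torus of $\C^2$ near each $p_i$, it is automatically a subgroup of the isometry group of the Joyce–Calderbank–Singer metric $\om_r$ on the local model $Y_{p_i,q_i}$, since that metric is $\T^2$-invariant by construction; this is exactly what is needed to lift $T$ to $\widetilde M$ and to run the $T$-equivariant gluing of \cite{aps} with no obstruction, completing the proof of Lemma~\ref{orbicontext}.
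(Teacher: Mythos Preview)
Your proposal is correct and follows essentially the same route as the paper: show $T$ fixes each singular point, show $X_s\in\mathfrak t$ by centrality of the extremal field among hamiltonian Killing fields, then linearize the $T$-action at the fixed point (the paper cites Cartan, you cite Bochner) and simultaneously diagonalize with $\Gamma$. Your presentation differs only cosmetically: you deduce that $T$ fixes the singularities from the fact that a connected group permuting a finite set acts trivially, whereas the paper argues via the lift to the orbifold chart; and you phrase the final step as a centralizer computation rather than simultaneous diagonalization of commuting abelian subgroups of $U(2)$, which is slightly more work than necessary (and note the exceptional case is $p=1$, not small $q$, where $\Gamma$ is scalar and the centralizer is all of $U(2)$---but then $T$ can be diagonalized in any basis and $\Gamma$ stays diagonal, so the conclusion still holds).
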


\begin{proof}
Let $p\in M$ be a singularity with orbifold group $\Gamma$ and $U\subset \C^2$ such that 
$U/\Gamma$ is an open neighbourhood of $p$ in $M$. $T\subset Isom(M,\om)$ so we can lift the action of $T$
to $U$ such that it commutes with the action of $\Gamma$. Thus $T$ fixes $p$.\\
We see that $X_s$ belongs to $\t$. Indeed, 
$$
\forall X\in\h, [X,X_s]=\mathcal{L}_X X_s=0
$$
because $X$ preserves the metric. Thus $X_s\in \h$ and as $\h=\t$, $X_s \in \t$.
Then we follow the proof of \cite{aps}. By a result of Cartan, we can find holomorphic coordinates on $U$ such that 
the action of $T$ is linear. More than that, we can suppose that the lift of $\om$ in these coordinates $(z_1,z_2)$
satisfies
$$
\om=\del \delb (\dfrac{1}{2} \vert z \vert^2 +\phi)
$$
where $\phi$ is $T$ invariant and $\phi=O(\vert z \vert^4)$.
In this coordinates $T$ appears as a subgroup of $U(2)$.
Thus $T$ is conjugate to a group whose action on $\C^2$ is diagonal.
As $T$ and $\Gamma$ commute, we can diagonalize simultaneously these groups.
In the new coordinates, the action of $\Gamma$ is described by definition \ref{HJsing} and $T\subset \T^2$
with $\T^2$ action on $\C^2$ given by
$$
\begin{array}{ccc}
 \T^2 \times \C^2 & \rightarrow & \C^2 \\
 (\theta_1,\theta_2),(w_1,w_2) & \mapsto & (e^{i\theta_1}w_1,e^{i\theta_2} w_2) \\
\end{array}
$$
\end{proof}

\begin{rmk}
Note that there exists a refinement of the proof of Arezzo, Pacard and Singer in the blow-up case. 
This is given in the paper of Sz\'ekelyhidi \cite{s}.
Basically, the idea is to glue the metric of Burns-Simanca to the extremal metric on the blow-up manifold.
This give an almost extremal metric. It remains to perturb the metric to obtain an extremal one. 
Moving the blown-up point if necessary, this problem becomes a finite dimensional problem.
This argument might be used in the case of resolution of isolated singularities. 
\end{rmk}

\section{Extremal metrics on orbisurfaces}
\label{surfaces2}

The aim of this section is to prove Theorem~\ref{theoB}. We first construct extremal metrics on special orbisurfaces. Then we apply Theorem~\ref{generalresult} to these examples, and identify the smooth surfaces obtained after resolution.

\subsection{Construction of the orbifolds}
\label{construct2}
In this section we generalize to an orbifold setting the so-called Calabi construction. We shall construct extremal metrics on projectivization of rank $2$ orbibundles over orbifold Riemann surfaces.
We will focus on the case where the orbifold Euler characteristic $\chi^{orb}$ of the Riemann surface is strictly negative, thought the method would extend directly to the other cases. See for example \cite{gbook} for a unified treatment of the construction in the smooth case. Our restriction will present the advantage of being very explicit and will help to keep track of the manifolds considered in section~\ref{surfaces}.

The starting point is the pseudo-Hirzebruch surfaces constructed by T\o{}nnesen-Friedman in \cite{tf}.
These are total spaces of fibrations
$$
\P(\mathcal{O} \oplus L) \rightarrow \Sigma_g
$$
where $\Sigma_g$ is a Riemann surface of genus $g$ and $L$ a positive line bundle. We briefly recall the construction of extremal metrics on such a surface.
Let 
$$
U:=\lbrace (z_0,z_1) / \vert z_0 \vert ^2 > \vert z_1 \vert ^2 \rbrace \subset \C^2
$$
and 
$$
\D:= \lbrace (z_0,z_1) \in U / z_0=1 \rbrace.
$$
We can then consider $U$ as a principal bundle over the Poincar\'e disc $\D$ which admits the trivialisation :
$$
\begin{array}{ccc}
 U & \rightarrow & \C^*\times \D \\
 (z_0,z_1) & \mapsto & (z_0, (1,\dfrac{z_1}{z_0})).
\end{array}
$$
The vector bundle associated to $U$ is trivial and we can consider it as the extension of $U$ over zero. We will denote by $U^q$ the tensor powers of this vector bundle.

Recall that $U(1,1)$ is the group of isomorphisms of $\C^2$ which preserve the form 
$$
u(z,w)= - z_0\overline{w_0}+z_1\overline{w_1}.
$$
Moreover,
$$
U(1,1)=\lbrace e^{i\theta} \left ( \begin {array} {cc} 
 \alpha & \overline{\beta} \\
  \beta & \overline{\alpha} 
\end{array} \right ) ; \alpha \overline{\alpha} - \beta \overline{\beta} =1; \theta \in \R, (\alpha, \beta) \in \C^2 \rbrace .
$$
and $U(1,1)$ acts on $U$.
One of the central results of this work of T\o{}nnesen-Friedman is then (see \cite{tf}):
\begin{theo}
\label{t}
Let $q$ be a non zero integer. There exists a constant $k_q$ such that for every choice of constants $0<a<b$ such that $\frac{b}{a}<k_q$, there is a $U(1,1)$-invariant extremal K\"ahler metric with non-constant scalar curvature on $U$.
This metric can be extended in a smooth way on $\P(\mathcal{O} \oplus U^q)$.
\end{theo}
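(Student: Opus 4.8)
The plan is to construct the required metrics by the momentum (Calabi) ansatz, as in \cite{tf}: the point of the $U(1,1)$-symmetry is that it reduces the extremal PDE to an ODE. The group $U(1,1)$ acts on $U$ preserving the indefinite norm $u(z,z)=-|z_0|^2+|z_1|^2$, so its orbits are the three-dimensional level sets $\{u(z,z)=c\}$, $c<0$, and $U$ is a $\C^*$-bundle over the Poincar\'e disc $\D$ equipped with its hyperbolic metric (of constant negative scalar curvature, K\"ahler form $\varpi$), the fibre rotation being a Hamiltonian circle action that commutes with $U(1,1)$. Consequently every $U(1,1)$-invariant K\"ahler form on $U$ is of the form $\omega_F=dd^c F(t)$ for a function $F$ of the single real variable $t=\log(|z_0|^2-|z_1|^2)$, subject to a convexity-type positivity condition; any such form is automatically $U(1,1)$-invariant, so restricting to this ansatz is no loss.

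Next I would pass to the momentum variable $\tau=F'(t)$ — up to an affine change, the moment map of the circle action — which ranges over an open interval $(a,b)$, $0<a<b$, and encode the metric by its momentum profile $\Theta(\tau)=F''(t(\tau))>0$. The structural fact is that $\omega_F$ extends to a smooth K\"ahler metric on the compactification $\P(\mathcal{O}\oplus U^q)$ — which adjoins the zero and infinity sections over $\tau=a$ and $\tau=b$ — exactly when $\Theta$ satisfies the usual endpoint conditions for a profile on a $\C\P^1$-bundle, namely $\Theta(a)=\Theta(b)=0$ together with prescribed nonzero values of $\Theta'(a)$ and $\Theta'(b)$ determined by $q$. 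For metrics of this type the scalar curvature is given by the standard Calabi formula (see \cite{acgt}): a second-order linear differential expression in $\Theta$ in which the constant scalar curvature of $(\D,\varpi)$ appears as a parameter; and by Calabi's criterion a metric of this $S^1$-invariant kind is extremal precisely when its scalar curvature is an affine function $\alpha\tau+\beta$ of $\tau$.

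Imposing $s(\omega_F)=\alpha\tau+\beta$ turns the Calabi formula into an explicit linear ODE for $\Theta$, which integrates in closed form: writing $p(\tau)$ for the affine factor encoding the base, $Q:=p\Theta$ is a polynomial in $\tau$ of degree at most four, and since $Q''$ is pinned down by $\alpha,\beta$ and the known base data, $Q$ carries four undetermined constants ($\alpha$, $\beta$ and two constants of integration). The four endpoint conditions ($Q(a)=Q(b)=0$ and the two slope conditions, which translate into conditions on $Q'(a)$, $Q'(b)$) determine these constants by elementary linear algebra in terms of $a$, $b$, $q$; one then checks directly from the resulting formulas that $\alpha\neq 0$ for every admissible $(a,b)$, so that the scalar curvature of $\omega_F$ is genuinely non-constant. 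At this stage $\omega_F$ is completely explicit, $U(1,1)$-invariant, smooth on $\P(\mathcal{O}\oplus U^q)$, and formally extremal.

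The last step — and the \emph{main obstacle} — is to verify that this explicit $\Theta$ is strictly positive on the open interval $(a,b)$, which is exactly what makes $\omega_F$ a genuine K\"ahler metric. Since $p>0$ on $[a,b]$, positivity of $\Theta$ there is equivalent to positivity of the quartic $Q=p\Theta$, and near the endpoints this is automatic from the prescribed boundary slopes; so the real work is to exclude an interior zero of $Q$. A careful sign analysis of the coefficients of $Q$ shows that no interior zero occurs as long as the ratio $b/a$ stays below an explicit threshold $k_q$ depending only on $q$, whereas for $b/a\geq k_q$ a zero appears inside $(a,b)$ and the construction breaks down. Taking $k_q$ to be this threshold gives the statement.
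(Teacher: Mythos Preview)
The paper does not prove this theorem at all: it is quoted verbatim as a result of T\o{}nnesen-Friedman (see the sentence ``One of the central results of this work of T\o{}nnesen-Friedman is then (see \cite{tf}):'' immediately preceding the statement), and is used as a black box in the subsequent orbifold construction. So there is no ``paper's own proof'' to compare against.

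That said, your outline is precisely the strategy carried out in \cite{tf}: the $U(1,1)$-invariance reduces everything to a cohomogeneity-one Calabi ansatz, one passes to the momentum profile $\Theta$ on an interval $[a,b]$, the extremal equation becomes a linear ODE integrable in closed form (a quartic after clearing the base factor), the smooth-compactification conditions at $\tau=a,b$ fix the four free constants, and the genuine content is the positivity of the resulting profile on $(a,b)$, which holds exactly for $b/a$ below an explicit threshold $k_q$. Your identification of the positivity check as the main obstacle is correct and matches T\o{}nnesen-Friedman's analysis. If you want to turn this into a self-contained proof you would need to actually write down the quartic and carry out the sign analysis, but as a roadmap for reproducing \cite{tf} your proposal is accurate.
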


\begin{rmk} 
The value of the scalar curvature is $-\frac{4}{b}$ on the zero section and $-\frac{4}{a}$ on the infinity section.
\end{rmk}

\begin{rmk}
Here the considered manifolds are non-compact and we say that the metric is extremal if the vector field $J\nabla s$ associated to the metric is holomorphic.
\end{rmk}

T\o{}nnesen-Friedman uses this result to obtain extremal metrics on pseudo-Hirzebruch surfaces. Let $\Sigma$ a Riemann surface of genus greater or equal than two and let $\Gamma$ be its fundamental group. $\Gamma$ can be represented as a subgroup of $U(1,1)$ and we obtain a holomorphic bundle:
$$
\P(\mathcal{O} \oplus U^q) / \Gamma \rightarrow \D / \Gamma= \Sigma .
$$
T\o{}nnesen-Friedman shows that $\P(\mathcal{O} \oplus U^q)$ is isomorphic to $\P(\mathcal{O} \oplus \mathcal{K}^{\frac{q}{2}})$ 
where $\mathcal{K}$ stands for the canonical line bundle.
The result of T\o{}nnesen-Friedman provides extremal metrics on $\P(\mathcal{O} \oplus  \mathcal{K}^{\frac{q}{2}})$.

Now we consider an orbifold Riemann surface $\overline{\Sigma}$ of genus $g$, and refer to \cite{rs} for more details on orbifold Riemann surfaces.
The genus $g$ is no longer assumed to be greater than two but we assume that the orbifold Euler characteristic is 
strictly negative. In that case, the orbifold fundamental group $\Gamma$ of $\overline{\Sigma}$ can be represented as 
a subgroup of $U(1,1)$ such that
$$
\overline{\Sigma}=\D/\Gamma.
$$ 
We saw that 
$$
U\cong \C^* \times \D
$$
The action of $U(1,1)$ in this chart is given by:
$$
\gamma\cdot (\xi, z) \mapsto ( e^{i \theta} (\alpha + \overline{\beta} z) \xi , \dfrac{\beta + \overline{\alpha}z}{\alpha + \overline{\beta}z})
$$
where
$$
\gamma = e^{i\theta} \left ( \begin {array} {cc} 
 \alpha & \overline{\beta} \\
  \beta & \overline{\alpha} 
\end{array} \right )\in U(1,1).
$$
The action of $U(1,1)$ on $\C^{\otimes q} \times \D$ is then given by:
$$
\gamma\cdot (\xi, z) \mapsto ( e^{iq \theta} (\alpha + \overline{\beta} z)^q \xi , \dfrac{\beta + \overline{\alpha}z}{\alpha + \overline{\beta}z})
$$
and the change of coordinates 
$$
(\xi, z) \rightarrow (\xi^{-1},z)
$$
enables to extend this action and to define 
$$
\overline{M}=\P(\mathcal{O} \oplus U ^q)/ \Gamma .
$$
This naturally fibres over $\overline{\Sigma}$ and define an orbifold bundle
$$
\pi:\overline{M} \rightarrow \overline{\Sigma }.
$$
We adopt the convention from \cite{rt} for the definition below.
\begin{dfn}
 An orbifold line bundle over an orbifold $M$ is given by local invariant line bundles $L_i$ over each orbifold charts $U_i$ such that the following cocycle condition is satisfied:\\
Suppose that $V_1$, $V_2$ and $V_3$ are open sets in $M$ with orbifold groups $G_i$, and orbifold charts 
$U_i$, such that $V_i=U_i/G_i$, $i=1..3$.

Then by definition of an orbifold there are charts $U_{ij}$ such that 
$V_i\cap V_j=V_{ij} \cong U_{ij}/G_{ij}$ with inclusions 
$U_{ij} \rightarrow U_i$ and $G_{ij} \rightarrow G_i$, $\lbrace i,j\rbrace \subset \lbrace 1,2,3 \rbrace $.
Pulling back $L_j$ and $L_i$ to $U_{ij}$, there exists an isomorphism $\phi_{ij}$ from $L_j$ to $L_i$ intertwining the actions of $G_{ij}$. 
Moreover, pulling-back to $U_{123}$, the cocycle condition is that over $U_{123}$ we have:
$$
\phi_{12} \phi_{23} \phi_{31}=1 \in L_1\otimes L_2^*\otimes L_2\otimes L_3^*\otimes L_3\otimes L_1^* .
$$
\end{dfn}
This definition can be generalized to define orbifold vector bundles, tensor products,
direct sums and projectivizations of orbifold bundles.
It is enough to define these operations on orbifold charts $U_i$ and verify that the cocycle condition is still satisfied. The orbifold canonical bundle $\mathcal{K}_{orb}$ is defined to be 
$\mathcal{K}_U$ on each orbifold chart $U$.

With these definitions in mind, we prove the following:
\begin{lemma}
Suppose that $q=2r$.
Then the surface $\overline{M}$ is isomorphic to $\P(\mathcal{O} \oplus \mathcal{K}_{orb}^r)$. 
\end{lemma}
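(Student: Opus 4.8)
The plan is to reduce the statement to an equivariant statement on the Poincar\'e disc $\D$. Since $\overline{\Sigma}=\D/\Ga$ is a global quotient orbifold, an orbifold line (or vector) bundle over $\overline{\Sigma}$ is nothing but a $\Ga$-equivariant bundle over $\D$, and projectivisation commutes with passing to the $\Ga$-quotient because $\Ga$ acts on $\mathcal{O}\oplus U^{2r}$ by vector-bundle automorphisms covering its action on $\D$. Hence it is enough to exhibit a $\Ga$-equivariant isomorphism $\mathcal{O}\oplus U^{2r}\cong\mathcal{O}\oplus\mathcal{K}_{\D}^{\,r}$ of holomorphic bundles on $\D$, and, since the $\mathcal{O}$-summands carry the trivial $\Ga$-action on both sides, this amounts to a $\Ga$-equivariant isomorphism $U^{2r}\cong\mathcal{K}_{\D}^{\,r}$. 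This is the orbifold version of T\o{}nnesen-Friedman's computation recalled above.

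Next I would write the two $\Ga$-linearisations in their natural global trivialisations. Extending $U\cong\C^*\ti\D$ over the zero section gives the trivial bundle $\C\ti\D$, and by the formula recalled above an element $\ga=e^{i\th}\left(\begin{smallmatrix}\al&\overline{\be}\\ \be&\overline{\al}\end{smallmatrix}\right)\in U(1,1)$ acts on $U^{2r}=\C^{\otimes 2r}\ti\D$ by $(\xi,z)\mapsto\big(e^{2ri\th}(\al+\overline{\be}z)^{2r}\,\xi,\ \ga\cdot z\big)$. On the other hand $\D$ is contractible, so $\mathcal{K}_{\D}$ is trivialised by $dz$; differentiating $w=\ga\cdot z=\tfrac{\be+\overline{\al}z}{\al+\overline{\be}z}$ and using $\al\overline{\al}-\be\overline{\be}=1$ gives $dw=(\al+\overline{\be}z)^{-2}\,dz$, so in the trivialisation $dz$ the element $\ga$ acts on $\mathcal{K}_{\D}$ by $(\eta,z)\mapsto\big((\al+\overline{\be}z)^{2}\,\eta,\ \ga\cdot z\big)$ and hence on $\mathcal{K}_{\D}^{\,r}$ by $(\eta,z)\mapsto\big((\al+\overline{\be}z)^{2r}\,\eta,\ \ga\cdot z\big)$.

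Comparing these two expressions, the identity map of $\C\ti\D$ intertwines the two actions as soon as $e^{2ri\th}=1$ for every element of $\Ga$. This is precisely where the choice of representation enters: as in \cite{rs} and \cite{tf}, $\Ga$ is taken inside $SU(1,1)$, so that the $U(1)$-part of each of its elements satisfies $e^{2i\th}=\det\ga=1$; it is also where the hypothesis $q=2r$ is used, since for odd $q$ the bundle $U^{q}$ is merely a holomorphic square root of $\mathcal{K}_{\D}^{\,q}$ and no such identification can hold. Granting this, the identity of $\C\ti\D$ is a $\Ga$-equivariant isomorphism $U^{2r}\cong\mathcal{K}_{\D}^{\,r}$; adding the trivially linearised $\mathcal{O}$-summand and descending to $\overline{\Sigma}$, we obtain an isomorphism of orbifold bundles $(\mathcal{O}\oplus U^{2r})/\Ga\cong\mathcal{O}\oplus\mathcal{K}_{orb}^{\,r}$, and projectivising gives $\overline{M}=\P(\mathcal{O}\oplus U^{2r})/\Ga\cong\P(\mathcal{O}\oplus\mathcal{K}_{orb}^{\,r})$, as orbisurfaces fibred over $\overline{\Sigma}$.

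The only genuinely delicate point is the first reduction: one must check that the identification of orbifold bundles produced this way really satisfies the cocycle condition in the definition of an orbifold bundle, i.e.\ that the isomorphisms $\phi_{ij}$ extracted chart by chart from the single equivariant map on $\D$ agree on triple overlaps. This is routine once everything is written in the orbifold charts, and it is where the formalism set up above for orbifold bundles is needed; the rest of the argument is the elementary computation of the two automorphy factors $(\al+\overline{\be}z)^{2r}$ and the remark that they coincide precisely because $\Ga\subset SU(1,1)$ and $q$ is even.
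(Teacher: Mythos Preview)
Your proof is correct and follows essentially the same approach as the paper: both compute the automorphy factor of $U^{2r}$ under $\Gamma$ and compare it with that of $\mathcal{K}_{\D}^{\,r}$ via the identity $d(\gamma\cdot z)=(\alpha+\overline{\beta}z)^{-2}\,dz$. The only difference is packaging---you phrase everything in terms of $\Gamma$-equivariant line bundles on $\D$ while the paper speaks of transition functions---and you are slightly more explicit than the paper about why the factor $e^{2ir\theta}$ drops out (namely $\Gamma\subset SU(1,1)$ together with $q=2r$ even).
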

\begin{rmk}
We will see in the study of the singularities that we need to consider $q$ even.
\end{rmk}
\begin{proof}
Following T\o{}nnesen-Friedman (\cite{tf}), we compute the transition functions. Recall that the action of $\Gamma$ is given by:
$$
\forall \gamma \in \Gamma, \, \gamma\cdot (\xi,z)=( e^{2ir \theta} (\alpha + \overline{\beta} z)^{2r} \xi , \dfrac{\beta + \overline{\alpha}z}{\alpha + \overline{\beta}z})
$$
in the chart $\C^{\otimes 2r}\times\D$. As $\D/\Gamma=\overline{\Sigma}$, the transition functions are induced by the maps:
$$
z \mapsto \gamma\cdot z
$$
with $\gamma \in \Gamma$.
Thus the transition functions for the bundle $U^{2r} /\Gamma$ are 
$$
z \mapsto (\alpha + \overline{\beta} z)^{2r}.
$$
Now the transition functions for $\mathcal{K}_{orb}$ are computed by 
$$
d(\gamma\cdot z)= d(\dfrac{\beta + \overline{\alpha}z}{\alpha + \overline{\beta}z})=(\dfrac{1}{\alpha + \overline{\beta}z})^2dz
$$
because $\gamma\in U(1,1)$. It shows that they are equal to
$$
z \mapsto (\alpha + \overline{\beta} z)^{-2}
$$
and $ U^{2r} / \Gamma = \mathcal{K}_{orb}^r$. So $\overline{M}=\P(\mathcal{O} \oplus \mathcal{K}_{orb}^r)$ 
on $\overline{\Sigma}$.

\end{proof}

The extremal metric mentioned in Theorem~\ref{t} is $\Gamma$-invariant as it is $U(1,1)$-invariant 
and the result of \cite{tf} extend to the case of orbibundles:

\begin{prop}
 \label{orbibundles}
Let $q=2r$ be a non zero even integer. There exists a constant $k_q$ such that for every choice of constants $0<a<b$ such that $\frac{b}{a}<k_q$, there is an extremal K\"ahler metric with non-constant scalar curvature on $\P(\mathcal{O} \oplus \mathcal{K}^r_{orb})$.
The restrictions of this metric to the zero and infinity sections of
$$
\P(\mathcal{O} \oplus\mathcal{K}^r_{orb}) \rightarrow \overline{\Sigma}
$$ are constant scalar curvature metrics.
The value of the scalar curvature on the zero section is $-\frac{4}{b}$ and $-\frac{4}{a}$ on the infinity section.
\end{prop}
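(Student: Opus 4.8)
The plan is to push the $U(1,1)$-invariant metric of Theorem~\ref{t} down to the orbifold quotient. Since $\chi^{orb}(\overline{\Sigma})<0$, the orbifold fundamental group $\Gamma$ of $\overline{\Sigma}$ embeds in $U(1,1)$ with $\D/\Gamma=\overline{\Sigma}$, its action on $\D$ having only finite stabilisers; as recalled above, this action lifts to $\P(\mathcal{O}\oplus U^q)$ and exhibits the orbifold fibration $\pi:\overline{M}\rightarrow\overline{\Sigma}$ as the quotient, with $\overline{M}=\P(\mathcal{O}\oplus\mathcal{K}_{orb}^r)$ when $q=2r$ by the lemma above. I would take $k_q$ to be the constant of Theorem~\ref{t}, fix $0<a<b$ with $b/a<k_q$, and let $\om_{TF}$ be the associated $U(1,1)$-invariant extremal K\"ahler metric on $\P(\mathcal{O}\oplus U^q)$. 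Being $U(1,1)$-invariant, $\om_{TF}$ is in particular $\Gamma$-invariant, so in each orbifold chart $W/G$ of $\overline{M}$ (with $G\subset\Gamma$ finite and $W\subset\P(\mathcal{O}\oplus U^q)$ a local slice) the form $\om_{TF}|_W$ is $G$-invariant K\"ahler, and these pieces patch into a K\"ahler orbifold metric $\om$ on $\overline{M}$; the cocycle compatibility is automatic, all pieces coming from the one global metric upstairs.

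Next I would verify that $\om$ inherits the required properties, each step being an instance of the fact that a $\Gamma$-invariant local quantity descends to $\overline{M}$. The scalar curvature $s(\om_{TF})$, being local and $\Gamma$-invariant, is the pull-back of $s(\om)$; likewise the extremal vector field $X_s=J\nabla s(\om_{TF})$, being $U(1,1)$-invariant, descends to a vector field on $\overline{M}$ equal to $J\nabla s(\om)$, and it is holomorphic since holomorphy is checked locally. Hence $\om$ is extremal by Calabi's criterion. By the remark following Theorem~\ref{t}, $s(\om_{TF})$ equals $-4/b$ along the zero section and $-4/a$ along the infinity section of $\P(\mathcal{O}\oplus U^q)\rightarrow\D$; these sections are $U(1,1)$-invariant and project onto the corresponding sections of $\pi$, so $s(\om)$ takes the same values there, and since $a\ne b$ it is non-constant. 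Finally, in the Calabi construction underlying Theorem~\ref{t} the restriction of $\om_{TF}$ to the zero (resp.\ infinity) section is a positive constant multiple of the base metric on $\D$, namely its Poincar\'e metric, so its descent is a constant multiple of the hyperbolic orbifold metric of $\overline{\Sigma}$ and therefore has constant scalar curvature.

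The only place where the orbifold structure genuinely enters — and hence the main, indeed essentially the only, thing to check — is that $\om$ really is a smooth orbifold metric over the cone points of $\overline{\Sigma}$. There the stabiliser $G$ is a finite cyclic subgroup of $U(1,1)$ fixing a point of $\D$, and from the explicit description of the $\Gamma$-action recalled above one sees that, in suitable holomorphic coordinates on the local $\C^2$-chart of $\overline{M}$, $G$ acts by rotations of the two coordinates; since $\om_{TF}$ is $U(1,1)$-invariant and non-degenerate, its descent $\om$ is then an orbifold K\"ahler metric in the usual sense. Granting this bookkeeping, the proposition is just Theorem~\ref{t} and its remark read $\Gamma$-equivariantly, with the same constant $k_q$.
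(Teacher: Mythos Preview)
Your proposal is correct and follows exactly the paper's approach: the paper's entire argument is the single sentence preceding the proposition, namely that the metric of Theorem~\ref{t} is $U(1,1)$-invariant, hence $\Gamma$-invariant, so Tønnesen-Friedman's result descends to the orbifold quotient. You have simply written out in detail what the paper leaves implicit in that one line.
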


We have obtained extremal metrics on orbifold ruled surfaces.

\subsection{Singularities and resolution}
\label{sing}
We now proceed to the study of the singularities of the orbifolds.
Let $(A_i)$ denote the singular points of $\overline{\Sigma}$ and let $q_i$  be the order of the singular point $A_i$.

As $\chi^{orb}(\overline{\Sigma})<0$, there exists a morphism :
$$
\overline{\phi}:\Gamma \rightarrow Sl_2(\R)/\Z_2=Isom(\H^2)
$$
such that $\overline{\Sigma}=\H^2/Im(\overline{\phi})$.
The transformation
$$ 
z \mapsto \dfrac{z-i}{z+i}
$$
that sends the half-plane to the Poincar\'e disc gives :
$$ 
\phi_0:\Gamma \rightarrow SU(1,1)/\Z_2 .
$$
We recall a description of $\Gamma$:
$$
\Gamma=< (a_i,b_i)_{i=1..g}\, ,\, (l_i)_{i=1..s} \, \vert \, \Pi [a_i,b_i] \, \Pi l_i\, = l_i^{q_i}=1 >.
$$
Then $\phi_0$ defines matrices $A_i,B_i, L_i$ in $SU(1,1)$ satisfying the relations 
$$ 
\Pi [A_i,B_i]\, \Pi L_i\,=\pm Id \,; \, L_i^{q_i}=\pm Id .
$$
The action of $-Id$ on $\C^{\otimes q}\times \D$ is :
$$ 
(\xi, z) \rightarrow ((-1)^q \xi,z).
$$
In order to obtain isolated singularities, we will suppose that $q$ is even.

To simplify notation we will denote by $\Gamma$ the image of $\phi_0$. The singular points of $\overline{M}$ come from the points of $\P(\mathcal{O} \oplus U^q)$ with non-trivial stabilizer under the action of $\Gamma/\Z_2$. We will deal with points on the zero section in the chart $\C^{\otimes q}\times\D$.
Let $(\xi_0,z_0)$ be a point whose stabilizer under the action of $\Gamma$ is not reduced to $\{\pm Id\}$ and let $\gamma \neq \pm id$ in $\Gamma$ fix $(\xi_0,z_0)$.
The element $\gamma \in U(1,1)$ can be written:
$$
\gamma = \pm\left ( \begin {array} {cc} 
 \alpha & \overline{\beta} \\
  \beta & \overline{\alpha} 
\end{array} \right )
$$
and satisfies
$$
\dfrac{\beta + \overline{\alpha}z_0}{\alpha + \overline{\beta}z_0} =z_0.
$$
The stabilizer of $(\xi_0,z_0)$ is then include in the one of $z_0\in\D$ under the action of $\Gamma$ on $\D$. The point $z_0$ gives a singular point $A_i$ of $\overline{\Sigma}=\D/\Gamma$ and the point $(\xi_0,z_0)$ gives rise to a singular point in $\overline{M}$ in the singular fiber $\pi^{-1}(A_i)$. The isotropy group of $A_i$ is $\Z_{q_i}$ and we can suppose that $\gamma$ is a generator of this group.
As $\gamma$ is an element of $SU(1,1)$ of order $q_i$, its characteristic polynomial is 
$$
X^2\pm 2Re(\alpha)X+1.
$$
If $Re(\alpha)=\delta$ with $\delta \in \{-1,+1\}$, there exists a basis in which $\gamma$ is one of the following matrices:
$$
\left ( \begin {array} {cc} 
 \delta & 0 \\
  0 & \delta 
\end{array} \right ) ;
\left ( \begin {array} {cc} 
 \delta & 1 \\
  0 & \delta
\end{array} \right ) .
$$
It is not possible because $\gamma$ is suppose to be different from $\pm Id$ and of finite order.
Thus the characteristic polynomial of $\gamma$ admits two distinct complex roots and $\gamma$ is diagonalizable in $SU(1,1)$ so we can fix $P\in SU(1,1)$ such that
$$
P^{-1}. \gamma . P =  \left ( \begin {array} {cc} 
 a & 0\\
  0 & b
\end{array} \right ).
$$
Then $\gamma$ is of finite order $q_i$ and $\det(\gamma)=1$ so we can fix $\xi_{q_i}$ a primitive $q_i^{th}$ root of unity such that
$$
P^{-1}. \gamma . P =  \left ( \begin {array} {cc} 
 \xi_{q_i} & 0\\
  0 & \xi_{q_i}^{-1}
\end{array} \right ).
$$
As $P\in SU(1,1)$, $P$ preserves the open set $U$ and induces a change of coordinates in a neighbourhood of the fixed point.\\
Indeed, the action of $P$ on the coordinates $(\xi,z)$ is given by:
$$
P.(\xi,z)=((c + \overline{d} z)^q \xi , \dfrac{d + \overline{c}z}{c+ \overline{d}z})
$$
with
$$
P=\left ( \begin {array} {cc} 
 c & \overline{d} \\
  d & \overline{c} 
\end{array} \right ).
$$
We compute the differential at $(\xi_0,z_0)$:
$$
DP_{(\xi_0,z_0)}=\left ( \begin{array}{cc}
                          q \overline{d} (c+ \overline{d} z_0)^{q-1} \xi_0  & (c+\overline{d} z_0 )^q \\
                          1/(c+\overline{d} z_0)^2 & 0 
                         \end{array} \right ) .
$$
and the determinant of this matrix is
$$
det(DP_{(\xi_0,z_0)})=-(c+\overline{d} z_0)^{q-2}.
$$
Note that $q\geq 2$. As we don't have $c=d=0$, this determinant is zero if and only if $z_0=-\dfrac{c}{\overline{d}}$.
But $\vert z_0 \vert ^2<1$ so it would imply $\vert c\vert ^2 < \vert d \vert ^2$ which is impossible because 
$$
det(P)= \vert c \vert ^2 - \vert d \vert ^2=1.
$$
Thus $P$ defines a change of coordinates near the singular point.
In these new coordinates $(\xi',z')$, the action of $\gamma$ is
$$
(\xi',z') \mapsto (\xi_{q_i} ^{q}\: \xi', {\xi_{q_i}^{-2}}\: z').
$$
The only fixed points of $\gamma$ in this local chart are $(0,0)$ and $(\infty,0)$, 
the point at infinity corresponding to the action
$$
(\xi', z' ) \mapsto (\xi_{q_i} ^{-q}\: \xi', {\xi_{q_i}^{-2}}\: z').
$$
As the singular points of $\overline{\Sigma}$ are isolated, the singular fibers of $\overline{M}$ are isolated.
Moreover, we see that the singular points are on the zero and infinity sections in the initial coordinates system, 
as the transformation $P$ preserves the ruling and the zero and infinity sections.
We can recognize precisely the Hirzebruch-Jung type of these singularities using the method described in \cite{bpv}. To simplify we will suppose $gcd(r,q_j)=1$. If we set $\zeta_i= \xi_{q_i}^{-2}$, the action is
$$
(\xi',z') \mapsto (\zeta_i^{-r}\: \xi', \zeta_i\: z').
$$
Summarizing:
\begin{prop}
\label{orbifold2} Let $\overline{\Sigma}$ be an orbifold Riemann surface with strictly negative orbifold Euler characteristic. Let $(A_i)_{1\leq i \leq s}$ be its singular points that we suppose 
to be of order strictly greater than $2$.
Then if $q=2r$ is an even integer, the orbifold $\P(\mathcal{O} \oplus \mathcal{K}^r_{orb})$ defined in the above construction has $2s$ singular points, two of them in each fiber $\pi^{-1}(A_i)$.
Moreover, if $gcd(r,q_j)=1$, the singular points are of type $A_{p_i,q_i}$ and $A_{q_i-p_i,q_i}$ in each fiber, with $p_i\equiv -r [q_i]$.
\end{prop}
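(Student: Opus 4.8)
The plan is to assemble the local computations carried out just above into a description of the singular set of $\overline{M}=\P(\mathcal{O}\oplus U^q)/\Gamma$ and to read off the Hirzebruch--Jung types by comparison with Definition~\ref{HJsing}. There are three things to do: localise the singular points over cone points of $\overline{\Sigma}$, put the isotropy action into normal form, and match it with the model groups $\Gamma_{p,q}$.

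First I would reduce to the cone points. Since $q=2r$ is even, $-Id$ acts trivially on the charts $\C^{\otimes q}\times\D$ and hence on $\P(\mathcal{O}\oplus U^q)$, so $\overline{M}$ is singular exactly over the $\Gamma$-orbits whose stabiliser strictly contains $\{\pm Id\}$. If $\gamma\neq\pm Id$ fixes $(\xi_0,z_0)$ in the chart $\C^{\otimes q}\times\D$, then in particular $\gamma$ fixes $z_0\in\D$, so $z_0$ descends to a cone point $A_i$ of $\overline{\Sigma}=\D/\Gamma$; its isotropy is the cyclic group $\Z_{q_i}$, and I would take $\gamma$ to be a generator of it. Thus every singular point of $\overline{M}$ lies in some singular fibre $\pi^{-1}(A_i)$.

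Next I would bring the $\Z_{q_i}$-action into normal form. Diagonalising $\gamma\in SU(1,1)$ as $P^{-1}\gamma P=\mathrm{diag}(\xi_{q_i},\xi_{q_i}^{-1})$ with $P\in SU(1,1)$, one checks that $P$ induces a genuine holomorphic change of coordinates near the fixed point: $P$ preserves $U$ because $P\in SU(1,1)$, and its Jacobian satisfies $\det DP_{(\xi_0,z_0)}=-(c+\overline{d}\,z_0)^{q-2}$, which is nonzero since $|c|^2-|d|^2=1>|z_0|^2$ rules out $z_0=-c/\overline{d}$. In the new coordinates $(\xi',z')$ the generator acts linearly, and its only fixed points are $(0,0)$ on the zero section, where it acts by $(\xi',z')\mapsto(\xi_{q_i}^{q}\xi',\xi_{q_i}^{-2}z')$, and $(\infty,0)$ on the infinity section, where it acts by $(\xi',z')\mapsto(\xi_{q_i}^{-q}\xi',\xi_{q_i}^{-2}z')$ (this also uses that $P$ preserves the ruling and the two distinguished sections). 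Hence each fibre $\pi^{-1}(A_i)$ contains exactly two singular points, giving $2s$ in all, and they are isolated because the cone points $A_i$ are. Finally, setting $\zeta_i:=\xi_{q_i}^{-2}$, a primitive $q_i$-th root of unity, and using $q=2r$, the two actions become $(\xi',z')\mapsto(\zeta_i^{-r}\xi',\zeta_i z')$ and $(\xi',z')\mapsto(\zeta_i^{r}\xi',\zeta_i z')$; comparing the generators $\mathrm{diag}(\zeta_i,\zeta_i^{-r})$ and $\mathrm{diag}(\zeta_i,\zeta_i^{r})$ with the generator of $\Gamma_{p,q}$ in Definition~\ref{HJsing} and choosing $p_i$ with $0<p_i<q_i$, $p_i\equiv-r\,[q_i]$, I would conclude that the singularities are of type $A_{p_i,q_i}$ on the zero section and $A_{q_i-p_i,q_i}$ on the infinity section.

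The main obstacle --- really the only delicate point --- is the bookkeeping in this normal-form step: one must correctly transport the exponent $q=2r$ of the orbibundle through the conjugation by $P$ (this is what turns the naive weight $-1$ into $-r$), and one must use the hypothesis $\gcd(r,q_i)=1$ together with $q_i>2$ to ensure that $\zeta_i\neq1$ and $\zeta_i^{\pm r}\neq1$, so that the two points really are isolated fixed points and the pairs $(p_i,q_i)$, $(q_i-p_i,q_i)$ are the reduced data of honest $A_{p_i,q_i}$ and $A_{q_i-p_i,q_i}$ singularities. Everything else is a direct reading-off of the formulas already derived.
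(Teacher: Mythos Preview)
Your proposal is correct and follows essentially the same approach as the paper: the proposition is a summary of the local computations carried out in Section~\ref{sing}, and you reproduce exactly those steps --- the reduction to cone points via the even-$q$ trick, the diagonalisation of the generator in $SU(1,1)$ with the Jacobian check that $P$ is a genuine change of coordinates, the identification of the two fixed points on the zero and infinity sections, and the substitution $\zeta_i=\xi_{q_i}^{-2}$ to read off the Hirzebruch--Jung data. You even make explicit the matching of the infinity-section action with $A_{q_i-p_i,q_i}$, which the paper leaves implicit in the text.
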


\begin{rmk}
The hypothesis on the order of the singularity of $\overline{\Sigma}$ is 
needed to avoid an isotropy group of the form $\{ \pm Id\}$.
\end{rmk}

We can now apply the Theorem ~\ref{generalresult}. 

\begin{cor}
\label{orbiextrem}
Let $\overline{M}$ be a orbifold surface as in Proposition~\ref{orbifold2}, endowed with an extremal metric arising from Proposition~\ref{orbibundles}.
Let $\pi: \widetilde{M} \rightarrow \overline{M}$ be the Hirzebruch-Jung resolution of $\overline{M}$.
Then, $\widetilde{M}$ admits extremal K\"ahler metrics with non-constant scalar curvature that converge to $\pi^*\om$ on every compact set away from the exceptional divisors.
\end{cor}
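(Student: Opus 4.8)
The plan is to read off the corollary as a direct application of Theorem~\ref{generalresult}, after checking that its two hypotheses are met by $(\overline{M},\om)$. By Proposition~\ref{orbibundles}, the orbifold $\overline{M}=\P(\mathcal{O}\oplus\mathcal{K}^r_{orb})$ (with $q=2r$, and $0<a<b$, $b/a<k_q$) carries a $\Gamma$-invariant extremal K\"ahler metric $\om$ of non-constant scalar curvature; by Proposition~\ref{orbifold2}, the singular set of $\overline{M}$ consists of the $2s$ isolated points lying on the zero and infinity sections over the singular points $A_i$ of $\overline{\Sigma}$, each of Hirzebruch--Jung type $A_{p_i,q_i}$ or $A_{q_i-p_i,q_i}$. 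Hence $(\overline{M},\om)$ is precisely an extremal K\"ahler orbifold of complex dimension $2$ with isolated Hirzebruch--Jung singularities, and Theorem~\ref{generalresult} yields, for every choice of positive numbers $a_j$ attached to the $\C\P^1$s $E_j$ of the Hirzebruch--Jung strings of the minimal resolution, a number $\ep_0>0$ such that for all $\ep\in(0,\ep_0)$ the resolution $\widetilde{M}$ admits an extremal K\"ahler metric $\om_\ep$ in the class $[\pi^*\om]-\ep^2\sum_j a_j\,PD[E_j]$.

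It then remains to record the non-constancy of the scalar curvature and the claimed convergence, both of which are contained in the remark following Theorem~\ref{generalresult}. The gluing construction of \cite{aps} produces $\om_\ep$ by a perturbation concentrated near the exceptional divisors, so on every compact subset of $\widetilde{M}\setminus\pi^{-1}(\mathrm{Sing}(\overline{M}))$ one has $\om_\ep\to\pi^*\om$ in $C^\infty$ as $\ep\to0$, and therefore $s(\om_\ep)\to \pi^*s(\om)$ there. Since $s(\om)$ is non-constant --- indeed, by Proposition~\ref{orbibundles} it equals $-4/b$ on the zero section and $-4/a$ on the infinity section, and $a\neq b$ --- the scalar curvature $s(\om_\ep)$ is non-constant for $\ep$ small enough. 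This gives all three assertions of the corollary.

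There is no genuinely hard step here: the corollary is a formal consequence of the two preceding subsections together with the orbifold gluing theorem. The only point needing a little care is the validity of Theorem~\ref{generalresult} in the orbifold rather than the manifold setting, but this is exactly what Lemma~\ref{orbicontext} supplies: a maximal torus $T$ of the group $K$ of exact symplectomorphisms fixes each singular point, contains the extremal vector field in its Lie algebra, and acts linearly --- diagonally, and simultaneously with the orbifold group $\Gamma$ --- in suitable orbifold charts. Granting this, the $T$-equivariant argument of \cite{aps}, carried out in the orbifold framework of \cite{ap1}, applies with $\h=\t$ so that no obstruction arises, and the proof is complete.
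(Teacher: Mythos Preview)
Your proposal is correct and matches the paper's approach: the paper does not give a separate proof of this corollary but simply states ``We can now apply the Theorem~\ref{generalresult}'' immediately before it, relying on Propositions~\ref{orbibundles} and~\ref{orbifold2} to supply the hypotheses. Your write-up makes explicit exactly the verifications the paper leaves implicit, including the appeal to the remark after Theorem~\ref{generalresult} for non-constancy and convergence.
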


\subsection{Identification of the resolution $\widetilde{M}$}
\label{surfaces}
We want to describe the surface that we obtain after desingularization. Let's consider $\overline{M}$ obtained in the Corollary~\ref{orbiextrem}. This is the total space of a 
singular fiber bundle 
$$
\overline{\pi} : \overline{M}=\P(\mathcal{O} \oplus \mathcal{K}^r_{orb}) \rightarrow \overline{\Sigma}.
$$
The $2s$ singular points are situated on $s$ singular fibers $F_i$, each of them admits a singularity of 
type $A_{p_i,q_i}$ on the zero section and of type $A_{q_i-p_i,q_i}$ on the infinity section.

Let $\Sigma$ be the smooth Riemann surface topologically equivalent to $\overline{\Sigma}$. 
We define a parabolic ruled surface $\check{M}$ in the following way: first we set 
$$
n_j=\dfrac{p_j+r}{q_j}
$$
for each singular fiber. Note that by construction $n_j$ is an integer. We define the line bundle $L$ on $\Sigma$:
$$
L= \mathcal{K}^r\otimes_j [A_j]^{r-n_j}
$$
where the $A_j$ stand for the points on $\Sigma$ corresponding to the singular points of $\overline{\Sigma}$. We set $\check{M}$ to be the total space of the fibration
$$
\P(\mathcal{O}\oplus L)\rightarrow \Sigma.
$$
The parabolic structure on $\check{M}$ consists in the $s$ points $A_j\in{\Sigma}$, the points $B_j$ in the fiber over $A_j$ on the 
infinity section and the corresponding weights $\alpha_j:=\dfrac{p_j}{q_j}$. Let $Bl(\check{M},\mathcal{P})$ be the iterated blow-up associated to $\check{M}$ as defined in the introduction.

\begin{prop}
 \label{surface2}
The smooth surface obtained by the minimal resolution of $\overline{M}$ is $Bl(\check{M},\mathcal{P})$.
\end{prop}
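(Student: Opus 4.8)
The plan is to prove the statement locally over a disc $\Delta_j$ around each marked point $A_j$ and then glue. Away from the $A_j$ everything coincides: $\overline{\Sigma}$ is smooth there, $\mathcal{K}_{orb}$ agrees with $\mathcal{K}$ there, the bundles $[A_j]$ are trivial there, and the maps $\pi:\widetilde{M}\to\overline{M}$ and $\Phi:Bl(\check{M},\mathcal{P})\to\check{M}$ are biholomorphisms away from the special fibres; hence over $\Sigma\setminus\{A_j\}$ all four surfaces are canonically identified with $\P(\mathcal{O}\oplus\mathcal{K}^r)$. So it suffices to produce, for each $j$, an isomorphism between neighbourhoods of the fibre over $A_j$ in $\widetilde{M}$ and in $Bl(\check{M},\mathcal{P})$ extending the identification just made; compatibility on the overlap is automatic, since there both sides are the trivial $\P^1$-bundle over a punctured disc.

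Near the fibre $F_j$ I would use the orbifold chart of Section~\ref{sing}, in which $\overline{M}$ is $(\Delta\times\P^1)/\Z_{q_j}$ for the explicit diagonal action, with the two singular points being the $A_{p_j,q_j}$ point on the zero section and the $A_{q_j-p_j,q_j}$ point on the infinity section of Proposition~\ref{orbifold2}; near each of them the fibre $F_j$ is the ``weight $p_j$'' (resp. ``weight $q_j-p_j$'') axis and the corresponding section is the weight-$1$ axis. Minimally resolving replaces each singular point by its Hirzebruch--Jung string, and by the standard structure of such resolutions together with the choice of continued fractions in the Introduction these strings have self-intersections $-e_1,\dots,-e_k$ and $-e'_1,\dots,-e'_l$, each attached by one end to the proper transform $\widetilde{F}_j$ of $F_j$ and by the other end to the proper transform of a section. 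Writing $\pi^{*}F_j=\widetilde{F}_j+\sum\gamma_iE_i$, imposing $\pi^{*}F_j\cdot E=0$ for every exceptional curve $E$, and using $F_j\cdot F_j=0$, a short recursion along the two chains gives $\widetilde{F}_j\cdot\widetilde{F}_j=-1$, and it also shows that $\widetilde{F}_j$ meets the $-e_k$ and $-e'_l$ ends while the two sections meet the $-e_1$ and $-e'_1$ ends. Thus the configuration of curves over $A_j$ in $\widetilde{M}$ is exactly the chain $-e_1,\dots,-e_k,-1,-e'_l,\dots,-e'_1$ of the Introduction, so $\widetilde{M}$ is obtained from a ruled surface $\P(\mathcal{O}\oplus L')\to\Sigma$ by the iterated blow-up recipe of the Introduction applied over each $A_j$ to a point of the infinity section with weight $p_j/q_j$; equivalently $\widetilde{M}=Bl(\P(\mathcal{O}\oplus L'),\mathcal{P}')$ with $\mathcal{P}'$ having the same marked points and weights as $\mathcal{P}$. (This step is essentially the correspondence of Rollin--Singer \cite{rs} between orbifold and parabolic ruled surfaces, the only inputs being the singularity types and the location of the marked points, both settled in Section~\ref{sing}.)

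What remains, and what I expect to be the main obstacle, is to pin down $L'$ and show $L'=L=\mathcal{K}^r\otimes_j[A_j]^{r-n_j}$. By the first paragraph $L'$ agrees with $\mathcal{K}^r$ off the $A_j$, so $L'=\mathcal{K}^r\otimes_j[A_j]^{m_j}$ for integers $m_j$, and one must show $m_j=r-n_j$. This is precisely where the integer $n_j=(p_j+r)/q_j$ enters: near $A_j$ the orbibundle $\mathcal{K}_{orb}^r$ is $\mathcal{K}^r_{\Delta_j}$ carrying the $\Z_{q_j}$-monodromy recorded in Section~\ref{construct2}, and unwinding that monodromy through the Hirzebruch--Jung resolution (equivalently, performing the inverse iterated blow-down) twists the bundle over the smooth base by exactly $[A_j]^{r-n_j}$. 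Alternatively one argues with self-intersections: the proper transform of the zero section is untouched by the blow-ups, so its self-intersection equals $\deg\mathcal{K}^r_{orb}-\sum_jp_j/q_j$ when computed in $\widetilde{M}$ (the correction at the $A_{p_j,q_j}$ point along its weight-$1$ axis being $p_j/q_j$) and equals $\deg L'$ when computed in $Bl(\P(\mathcal{O}\oplus L'),\mathcal{P}')$; since $\deg\mathcal{K}^r_{orb}=r(2g-2)+r\sum_j(1-1/q_j)$ this gives $\deg L'=r(2g-2)+\sum_j(r-n_j)$, and localising the same computation near each $A_j$ then yields $m_j=r-n_j$ individually. The delicate point throughout is this bundle bookkeeping --- keeping simultaneous track of the orientation of each Hirzebruch--Jung string, of which section runs through which singular point, and of the signs of the self-intersection corrections --- and in particular extracting the individual exponents $m_j$ rather than merely their sum.
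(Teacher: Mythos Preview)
Your argument is correct and follows the same overall architecture as the paper --- localise near each $A_j$, invoke the Rollin--Singer toric correspondence \cite{rs} to match the Hirzebruch--Jung resolution of $(\Delta_j\times\C\P^1)/\Z_{q_j}$ with the iterated blow-up $Bl(\Delta_j\times\C\P^1,\mathcal{R})$, and then identify the line bundle $L'$ on the smooth minimal model. There are, however, two points where the paper proceeds differently and more directly.

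First, to show that the minimal model of $\widetilde{M}$ has the form $\P(\mathcal{O}\oplus L')$, the paper does not rely on the curve combinatorics at all: it lifts the extremal vector field $X_s=J\nabla s$ from $\overline{M}$ to $\widetilde{M}$ and pushes it down to the minimal model, where its two zeros on each fibre give two disjoint holomorphic sections, hence a splitting of $E$. This is a clean global argument that bypasses any need to argue that gluing local $\P^1$-bundles over $\Delta_j$ to $\P(\mathcal{O}\oplus\mathcal{K}^r)$ over $\Sigma^*$ really yields a projectivised split bundle. Your local gluing approach does work, but you should be explicit that the identification over $\Sigma^*$ respects the zero and infinity sections so that the glued object is $\P(\mathcal{O}\oplus L')$ and not merely some $\P^1$-bundle.

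Second, for the identification of $L'$ the paper writes down the explicit desingularisation map
\[
\phi_j:(z,[u,v])\longmapsto (x,[u,v'])=(z^{q_j},[u,z^{-p_j}v])
\]
from the orbifold chart to the smooth chart, and then simply tracks how the trivialising section $x\mapsto x^{r-r/q_j}(dx)^{\otimes r}$ of $\mathcal{K}_{orb}^r$ over $\Delta_j^*$ extends across $x=0$: the answer $x\mapsto x^{r-(p_j+r)/q_j}(dx)^{\otimes r}$ gives $m_j=r-n_j$ immediately and pointwise, with no global degree count needed. Your ``unwinding the monodromy'' is exactly this computation once you name $\phi_j$; the self-intersection alternative is correct for the total degree but, as you note yourself, does not by itself localise to give each $m_j$ separately --- you would still need a local computation equivalent to the $\phi_j$ one. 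So the cleanest fix is to replace the self-intersection paragraph by the explicit $\phi_j$ calculation.
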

Note that together with corollary~\ref{orbiextrem}, proposition~\ref{surface2} ends the proof of Theorem~\ref{theoB}. More precisions on the K\"ahler classes obtained are given in section~\ref{classes}.

\begin{proof}
We denote by 
$$
\overline{\pi}: \widetilde{M} \rightarrow \overline{M}
$$
the minimal resolution of $\overline{M}$. This is a ruled surface so it comes from blow-ups of a minimal ruled surface $M$:
$$
\xymatrix{&          \widetilde{M}\ar[dl]_{\pi} \ar[dr]^{\overline{\pi}}   &  \\
M & & \overline{M} }
$$

$M$ is the total space of a fibration $\P(\mathcal{O} \oplus L')$ over $\Sigma$.
Indeed, $M$ is birationally equivalent to $\overline{M}$ so $M$ is a ruled surface over $\Sigma$. 
As $M$ is minimal, it is of the form $\P(E)$ where $E$ is a holomorphic bundle of rank $2$ on $\Sigma$.
Then, to show that $E$ splits, we consider the vector field $X_s=J\nabla s$ on $\overline{M}$, 
with $s$ the scalar curvature of a metric from Corollary~\ref{orbiextrem}. The vector field $X_s$ is vertical and can be lifted to $\widetilde{M}$. 
It projects to a vertical holomorphic vector field on $M$. The latter admits two zeros on each fiber
because it generates an $S^1$ action and not a $\R$ action. The two zeros of this vector field restricted to each fiber give two holomorphic sections of $\P(E)$ and 
describe the splitting we look for. Tensoring by a line bundle if necessary, we can suppose that 
$\P(E)=\P(\mathcal{O} \oplus L')$.

$$
\xymatrix{&          \widetilde{M}\ar[dl]_{\pi} \ar[dr]^{\overline{\pi}}   &  \\
\P(\mathcal{O} \oplus L') & & \overline{M} }
$$

It remains to recognize $L'$.

We set $M^*:=\D^*\times \C\P^1 / \Gamma$, where $\D^*$ denotes the Poincar\'e disc minus the fixed points under the action 
of $\Gamma$. $M^*$ is the surface obtained from $\overline{M}$ by taking away the singular fibers. As $\overline{\pi}$ and 
$\pi$ are biholomorphisms away from the exceptional divisors, we get a natural injection from $M^*$ to $M$.
$M$ appears as a smooth compactification of $M^*$. Moreover, $M^*=\P(\mathcal{O}\oplus\mathcal{K}^r_{orb})$ on $\Sigma^*$.

$$
\xymatrix{&          \widetilde{M}\ar[dl]_{\pi} \ar[dr]^{\overline{\pi}}   &  \\
\P(\mathcal{O} \oplus L') & & \overline{M}\\
& M^*\ar[ul]\ar[ur]&
}
$$

We can understand the way of going from $\overline{M}$ to $M$ in the neighbourhood of the singular fibers using the 
work of \cite{rs}. If $A_j$ is a singular point of $\overline{\Sigma}$ of order $q_j$ and $\Delta_j$ a small disc 
around $A_j$ then $\Delta_j \times \C\P^1/\Z_{q_j}$ is a neighbourhood of the singular fiber over $A_j$. 
It follows from section~\ref{construct2} that the action of $\Z_{q_j}$ is given by:
$$
\begin{array}{ccc}
\Delta_j \times \C \P^1 & \rightarrow & \Delta_j \times \C \P^1 \\
(z,[u,v]) & \mapsto & (\zeta_{q_j} z, [u,\zeta_{q_j} ^{p_j}v]).
\end{array}
$$
with $\zeta_{q_j}$ a $q_j^{th}$ primitive root of unity and $p_j \equiv -r [q_j]$.
We get a neighbourhood $\Delta_j \times \C\P^1$ of the corresponding point in $M$ by the map:
$$
\begin{array}{cccc}
 \phi_j: & (\Delta_j \times \C \P^1) / \Z^q & \rightarrow & \Delta_j \times \C \P^1\\
  & (z,[u,v]) & \mapsto & (x=z^{q_j}, [u,z^{-p_j} v]).
\end{array}
$$
Indeed, the resolution of Hirzebrugh-Jung singularities and blow-down $-1$-curves are toric processes.
From the theorem 3.3.1. of \cite{rs}, we know that the fan of the Hirzebrugh-Jung resolution of 
$\Delta_j \times \C\P^1/\Z_{q_j}$ is the same as the one of the iterated blow-up $Bl(\Delta_j\times \C\P^1,\mathcal{R})$
of $\Delta_j\times \C\P^1$ described in 
the introduction, with parabolic structure 
$\mathcal{R}$ consisting in the point $(0,[0,1])$ with the weight $\dfrac{p_j}{q_j}$. Moreover, the resulting map from 
$\Delta_j \times \C\P^1/\Z_{q_j}$ to $\Delta_j\times \C\P^1$ is $\phi_j$.
$$
\xymatrix{&          Bl(\Delta_j\times \C\P^1,\mathcal{R})\ar[dl]_{\pi} \ar[dr]^{\overline{\pi}}   &  \\
\Delta_j \times \C\P^1 & &\ar[ll]_{\phi_j} \Delta_j \times \C\P^1/\Z_{q_j}\\
& \Delta_j^* \times \C\P^1\ar[ul]\ar[ur]&
}
$$

We get $M$ by gluing to $M^*$ the open sets $\Delta_j\times \C\P^1$ on the open sets $(\Delta_j^*\times \C\P^1)/ \Z_q$ 
with the maps $\phi_j$. We now show that these maps modify $\P(\mathcal{O}\oplus \mathcal{K}_{orb}^r)$ on 
$\Sigma^*$ to $\P(\mathcal{O}\oplus L)$ on $\Sigma$. 
Indeed, from the identity
$$
dz=x^{\frac{1}{q_j} - 1} dx
$$
on $\Delta_j^*/\Z_{q_j}$, we see that on $\Delta_j^*$, a local trivialization for $L'$ is given by
$$
x \mapsto x^{r-\frac{r}{q_j}} (dx)^{\otimes r}.
$$
Then the maps $\phi_j$ define the coordinates
$$
u'=u, \: v'=z^{-p_j}v=x^{-\frac{p_j}{q_j}}v
$$
wich implies that the trivializing section extends over $\Delta_j$ in $\Sigma$ by
$$
x \mapsto x^{r-\frac{p_j+r}{q_j}} dx^{\otimes r}.
$$
In other words, gluing $\Delta_j\times \C\P^1$ to $M^*$ using the $\phi_j$ is the same as 
tensoring $\mathcal{K}^r_{orb}$ with the $[A_j]^{\frac{-p_j}{q_j}}$.
Thus we conclude that $L'= \mathcal{K}^r\otimes_j [A_j]^{r-n_j}=L$.

So far, we have seen that $\widetilde{M}$ comes from $\P(\mathcal{O}\oplus L)$ from an iterated blow-up. The iterated blow-up
is described locally in the toric framework in \cite{rs} and it corresponds exactly to the one of $Bl(\check{M},\mathcal{P})$,
which ends the proof.
\end{proof}

\subsection{The K\"ahler classes}
\label{classes}
Let $(\overline{M},\om)$ be an orbifold extremal K\"ahler surface and $(\widetilde{M},\om_{\ep})$ 
the smooth surface obtained after desingularization with an extremal metric as in Corollary~\ref{orbiextrem}
$$
\overline{\pi} : \widetilde{M} \rightarrow \overline{M}. 
$$
Away from the exceptional divisors, $\om_{\ep}$ is obtained by a perturbation of the form 
$\om_{\ep}=\om+\del \delb f$ so the K\"ahler class doesn't change. In order to determine $[\om_{\ep}]$ 
it is sufficient to integrate $\om_{\ep}$ along a basis for $H_2(\widetilde{M},\R)$.
We will compute the K\"ahler class in the case of a unique singular fiber, 
the general case can be deduced from this one. Denote by $M=\P(\mathcal{O} \oplus L)$ the minimal model 
associated to $\widetilde{M}$
$$
\pi:\widetilde{M} \rightarrow M.
$$
It's homology $H_2(M,\R)$ is generated by the zero section and the class of a fiber. 
Thus the homology $H_2(\widetilde{M},\R)$ is generated by the proper transform of these 
two cycle and the exceptional divisors. Consider the chain of curves coming from the resolution 
of the singular fiber of $\overline{M}$
$$
\xymatrix{
{}\ar@{-}[r]^{-e_1}_{E_1} & *+[o][F-]{}
\ar@{-}[r]^{ -e_{2}}_{E_2} &  *+[o][F-]{}
\ar@{--}[r] &  *+[o][F-]{}
\ar@{-}[r]^{-e_{k-1}}_{E_{k-1}} &  *+[o][F-]{}
\ar@{-}[r]^{-e_k}_{E_k} &  *+[o][F-]{}
\ar@{-}[r]^{-1}_{S} &  *+[o][F-]{}
\ar@{-}[r]^{-e'_{l}}_{E_l'} &  *+[o][F-]{}
\ar@{-}[r]^{-e'_{l-1}}_{E_{l-1}'} &  *+[o][F-]{}
\ar@{--}[r] &  *+[o][F-]{}
\ar@{-}[r]^{-e'_{2}}_{E_2'} &  *+[o][F-]{}
\ar@{-}[r]^{ -e'_{1}}_{E_1'} & 
} $$

Note that $S$ is the proper transform under $\overline{\pi}$ of the singular fiber of $\overline{M}$. 
$E_1'$ is the proper transform under $\pi$ of a fiber of $M$. The construction of $\om_{\ep}$ shows that integrating 
$$
\sum_j \int_{E_j} \om_{\ep}=\ep^2 V
$$
or
$$
\sum_i \int_{E_i'} \om_{\ep}=\ep^2 V'
$$
for small positive number $\ep$ and $V$ and $V'$ depending on the volume of the metric on each resolution. 
The construction of the metric of Joyce, Calderbank and Singer enables to choose the volume 
of each curve $E_j$ and $E_i'$ provided that the sum
is equal to $V$ and $V'$ respectively.
Thus if we choose $(a_j)$ and $(a_i')$ such that 
$$
\sum_j a_j=V
$$
and
$$
\sum_i a_i'=V'
$$
we get
$$
\int_{E_j} \om_{\ep}=\ep^2 a_j
$$
and
$$
\int_{E_i'} \om_{\ep}=\ep^2 a_i'
$$
If we denote by $S_0$ the proper transform of the zero section of $M$ with $\pi$, 
it remains to compute $[\om_{\ep}]\cdot S_0$ and $[\om_{\ep}]\cdot S$.
These two integrals can be computed on $\overline{M}$ following T\o{}nnesen-Friedman \cite{tf}.
On the zero section, the metric is of constant sectional curvature and 
its scalar curvature is equal to $b$ so the Gauss-Bonnet formula for orbifolds gives
$$
A=\int_{S_0} \om_{\ep} = -\pi b \chi^{orb}.
$$
On $S$, the explicit form of the metric on $U$ enables to compute
$$
B=\int_S \om_{\ep}=2\pi \dfrac{(b-a)}{2rq}.
$$
where $r$ is related to the degree $d$ of $L$ by $d=-r\chi+\sum_j (r-n_j)$, $a$ and $b$ 
are constants satisfying $\dfrac{a}{b}< k_{2r}$ and $q$ is the order of the singularities of the singular fiber.

Now, if we write $\om_{\ep}$ in the basis formed by the Poincar\'e duals of the generators 
$S_0$, $S$, $(E_i)$ and $(E_j')$ of $H_2(\widetilde{M},\R)$ we have
$$
\om_{\ep}=c_0PD(S_0)+c_1PD(E_1)+..+c_kPD(E_k)+cPD(S)
$$
$$
+c_l'PD(E_l')+..+c_1'PD(E_1').
$$
We can compute the vector 
$$
C=[c_0,c_1,..,c_k,c,c_l',..,c_1']^t
$$
using the matrix $Q$ which represents the intersection form of $H_2(\widetilde{M},\R)$ in this basis
$$
Q=\left ( \begin {array} {ccccccccc} 
 -l-1 & 1 & 0 &\cdots & & & & & \\
 1 & -e_1 & 1 &\cdots & & & & & \\
 0 & 1 & -e_2 &\cdots & & & & & \\
 \vdots &\vdots &\vdots & & & & & & \\
  & & & & -e_k & 1 & 0 & & \\
  & & & & 1 & -1 & 1 &\cdots  & \\
  & & & & 0 & 1 & -e_l' & \\
  & & & & &\vdots & & & \\
  & & & & & & & & -e_1'\\
 \end{array} \right ).
$$
If we set
$$
I=[A,\ep^2 a_1, ... , \ep^2 a_k,B,\ep^2 a_l',...,\ep^2 a_1']^t
$$
the vector which represents the integration of $\om_{\ep}$ along the divisors, we have
$$
I=Q \cdot C
$$
so 
$$
C=Q^{-1} I
$$
and this gives the parameters $(c_i)$ we were looking for, determining the K\"ahler class of $\om_{\ep}$.

The surface $\widetilde{M}$ is a ruled surface obtained by blowing-up a minimal ruled manifold. 
If the minimal model admits an extremal metric and under certain assumptions, one can construct extremal metrics 
on $\widetilde{M}$ using the gluing theory of \cite{aps}. 
However the K\"ahler classes are not the same. For example we consider the chain of curves 
that comes from resolution of a singular fiber:
$$
\xymatrix{
{}\ar@{-}[r]^{-e_1} & *+[o][F-]{}
\ar@{-}[r]^{ -e_{2}} &  *+[o][F-]{}
\ar@{--}[r] &  *+[o][F-]{}
\ar@{-}[r]^{-e_{k-1}} &  *+[o][F-]{}
\ar@{-}[r]^{-e_k} &  *+[o][F-]{}
\ar@{-}[r]^{-1} &  *+[o][F-]{}
\ar@{-}[r]^{-e'_{l}} &  *+[o][F-]{}
\ar@{-}[r]^{-e'_{l-1}} &  *+[o][F-]{}
\ar@{--}[r] &  *+[o][F-]{}
\ar@{-}[r]^{-e'_{2}} &  *+[o][F-]{}
\ar@{-}[r]^{ -e'_{1}} & 
} $$
Every curve is small except the middle one of self-intersection $-1$. On the other hand, if we used the method 
from \cite{aps}, we would have had small curves except for the 
one on the right hand side wich corresponds to the proper transform 
of the original fiber on the smooth minimal ruled surface.

\section{Applications to unstable parabolic structures}
\label{theo D}
This section is devoted to unstable parabolic structures and the proof of Theorem~\ref{general}. Let $M=\P(E)$ be a ruled surface over a Riemann surface $\Sigma$.
From \cite{acgt} if the genus of 
$\Sigma$ is greater than two, then $M$ admits a metric of constant scalar curvature in some class if and only if 
$E$ is polystable. 
On the other hand, T\o{}nnesen-Friedman has proved in \cite{tf} that $M=\P(\mathcal{O}\oplus L)$ with $\deg(L)>0$, 
if and only if $M$ admits an extremal K\"ahler metric of non-constant scalar curvature. In that case the 
bundle is decomposable and not polystable.

\begin{rmk}
Note also that Ross and Thomas have shown that for any vector bundle $E$ the K-stability of $\P(E)$ was equivalent to the polystability of $E$ \cite{rt2}. An adaptation of their argument in an equivariant context should enable to prove that $\P(E)$ admits an extremal metric of non-constant scalar curvature if and only if $E$ splits as a direct sum of stable sub-bundles, as conjectured in \cite{acgt}.
\end{rmk}

\begin{rmk}
The papers \cite{s1}, \cite{ss} and \cite{tf} confirm the Yau-Tian-Donaldson-Sz\'ekelyhidi conjecture on geometrically ruled surfaces. Together with corollary 1 from \cite{acgt}, this solves the problem of existence of extremal K\"ahler metrics on geometrically ruled surfaces in any K\"ahler class.
\end{rmk}

We now focus on parabolic ruled surfaces, performing an analogy with the previous results mentioned.
Suppose that the ruled surface is equipped with a parabolic structure $\mathcal{P}$. 
Let's recall the definition of parabolic stability from \cite{rs}:
we consider a geometrically ruled surface $\pi: M \rightarrow \Sigma$ with a parabolic structure 
given by $s$ points $A_j$ in $\Sigma$, and for each of these points a point $B_j\in\pi^{-1}(A_j)$ with 
a weight $\alpha_j\in]0,1[\cap \Q$.
\begin{dfn}
A parabolic ruled surface $M$ is parabolically stable if for every holomorphic section $S$ of $\pi$ its slope 
is strictly positive:
$$
\mu(S)=S^2+\sum_{j\notin I} \alpha_j - \sum_{j\in I} \alpha_j >0
$$
where $j\in I $ if and only if $B_j \in S$.
\end{dfn}

\begin{rmk}
If $M=\P(E)$, one can check that this definition is equivalent to the parabolic 
stability of $E$ in the sense of Mehta-Seshadri \cite{ms}.
\end{rmk}
In that case, Rollin and Singer have shown \cite{rs} that if the bundle is parabolically stable then 
there exists a scalar-flat K\"ahler metric on $Bl(M,\mathcal{P})$. 
More generally, if the surface is parabolically polystable and non-sporadic, 
then there exists a constant-scalar curvature metric on $Bl(M,\mathcal{P})$ (\cite{rs2}).
 
Given a parabolically unstable ruled surface, 
is there an extremal metric of non-constant scalar curvature on $Bl(M,\mathcal{P})$?

First of all, we show that if $Bl(M,\mathcal{P})$ admits an extremal metric of non-constant scalar curvature,
then the bundle $E$ is decomposable and one of the zero or infinity section might
destabilise $M$. Thus the situation looks like in the case studied by T\o{}nnesen-Friedman.

\begin{prop}
Let $M=\P(E)$ be a parabolic ruled surface over a Riemann surface of genus $g$
with a parabolic structure $\mathcal{P}$. Let $\frac{p_j}{q_j}$ be the weights of the parabolic structure.
Suppose that $Bl(M,\mathcal{P})$ admits an extremal metric of non-constant scalar curvature.
Then $M=\P(\mathcal{O}\oplus L)$.
Moreover, if 
$$
2-2g-\sum_j(1-\frac{1}{q_j})\leq 0
$$
the marked points of the parabolic structure all lie on the zero section or the infinity
section induced by $L$.
\end{prop}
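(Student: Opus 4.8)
The plan is to use the existence of an extremal metric of non-constant scalar curvature on $Bl(M,\mathcal{P})$ to produce a nontrivial holomorphic vector field, and then to trace this vector field back down to $M$ through the blow-down map $\Phi: Bl(M,\mathcal{P}) \rightarrow M$. First I would invoke Calabi's theorem: since the metric is extremal, the field $X_s = J\nabla s$ is a nonzero real-holomorphic vector field (nonzero precisely because $s$ is non-constant), generating an $S^1$-action inside the isometry group. Because $Bl(M,\mathcal{P})$ is an iterated blow-up of a ruled surface over $\Sigma$, and the blow-up construction is supported over finitely many fibers, the field $X_s$ descends to a holomorphic vector field $\check{X}$ on $M$. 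The key point is that $\check{X}$ must be vertical with respect to the ruling $\pi: M \rightarrow \Sigma$: any holomorphic vector field on $M$ projects to a holomorphic vector field on $\Sigma$, but if $g \geq 2$ there are none, and if $g \leq 1$ one must use that the blow-up locus — the marked points, which are permuted trivially — together with the relation $\sum_j(1-\frac{1}{q_j})$ and the hypothesis $2-2g-\sum_j(1-\frac1{q_j})\leq 0$ forces the projected field on $\Sigma$ to vanish (a field on $\T^2$ or $\C\P^1$ fixing ``enough'' orbifold points is zero; this is where the Euler-characteristic-type inequality enters). Hence $\check{X}$ is a nonzero vertical holomorphic field on $M = \P(E)$.

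Next I would extract the splitting of $E$ from $\check{X}$. A nonzero vertical holomorphic vector field on $\P(E) \rightarrow \Sigma$ corresponds to a nonzero holomorphic section of the bundle $\mathrm{End}_0(E)$ of trace-free endomorphisms (equivalently, of $\mathfrak{sl}(E)$), acting fiberwise as a projective vector field on $\C\P^1$. Since it generates a circle action rather than an $\mathbb{R}$-action, on a generic fiber it is conjugate to a rotation, hence has exactly two distinct fixed points; these two fixed points vary holomorphically and globalize to two disjoint holomorphic sections $S_0, S_\infty$ of $\pi$. Two disjoint sections of $\P(E)$ split $E$ as a direct sum of two line bundles; tensoring by a line bundle we may write $M = \P(\mathcal{O}\oplus L)$, with $S_0, S_\infty$ the zero and infinity sections induced by $L$. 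This proves the first assertion.

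For the second assertion I would argue that the marked points $B_j$ must lie on $S_0 \cup S_\infty$. The exceptional divisors of the iterated blow-up sit over the $B_j$, and the lift of $X_s$ to $Bl(M,\mathcal{P})$ must preserve this entire configuration of curves since $X_s$ is Killing for the extremal metric and the configuration is $X_s$-invariant (being intrinsically determined). Tracing back down via $\Phi$: the fixed-point set of $\check{X}$ on $M$ is exactly $S_0 \cup S_\infty$ (on each fiber $\check{X}$ has only its two zeros), and because the blow-up is performed precisely at the $B_j$ and the chain of curves over each $B_j$ must be $X_s$-invariant, each $B_j$ is necessarily a fixed point of $\check{X}$, i.e. $B_j \in S_0\cup S_\infty$. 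One then observes that all the $B_j$ can be moved onto a single section: since $\check{X}$ interchanges nothing and the two sections are disjoint, a marked point on $S_0$ versus one on $S_\infty$ could in principle both occur, but switching $\mathcal{O} \leftrightarrow L$ (i.e. relabeling zero and infinity) together with the constraint that the extremal vector field be globally defined forces a consistent choice — here again the inequality $2-2g-\sum_j(1-\frac1{q_j})\leq 0$ is what rules out the mixed configuration, since in the mixed case the relevant section-slope computation would contradict the existence of the holomorphic $S^1$-action.

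The main obstacle I expect is the descent-and-vanishing step: showing rigorously that $X_s$ pushes forward to a \emph{vertical} holomorphic field on $M$ in the low-genus cases $g=0,1$, where $\Sigma$ itself carries holomorphic vector fields. This is exactly where the hypothesis $2-2g-\sum_j(1-\frac{1}{q_j})\leq 0$ must be used: one needs that the induced field on $\Sigma$ (a priori a field on $\C\P^1$ or $\T^2$) is forced to vanish because it must be compatible with the orbifold/marked-point data encoded by the $q_j$, and the stated inequality is precisely the condition that the ``orbifold Euler characteristic'' is non-positive, leaving no room for a nonzero field. Handling the combinatorics of which marked point lies on which section, and the normalization by tensoring with a line bundle, is routine once this is in place.
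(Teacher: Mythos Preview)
Your overall strategy matches the paper's: produce the extremal vector field $X_s$, push it down to $M$, show it is vertical, read off two disjoint sections from its fiberwise zero locus, and conclude that the marked points lie on those sections. Three points, however, need correction.

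First, you assert without argument that $X_s$ generates an $S^1$-action. This is not automatic: an extremal vector field is Killing, hence its flow lies in the compact isometry group, but the one-parameter subgroup it generates need not close up. The paper deals with this by first perturbing to a rational K\"ahler class via the Lebrun--Simanca openness theorem, and then invoking the Futaki--Mabuchi periodicity theorem, which guarantees that for a rational class the extremal vector field generates a genuine circle. This matters because your ``two fixed points per fiber'' step requires periodicity to exclude a parabolic (single-fixed-point) action on the $\C\P^1$ fibers.

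Second, the first conclusion $M\cong\P(\mathcal{O}\oplus L)$ is asserted unconditionally, yet your verticality argument for $g\leq 1$ already uses the hypothesis $\chi^{orb}\leq 0$, which is attached only to the second conclusion. The paper separates cases: when $\chi^{orb}>0$ one necessarily has $g=0$, and then Grothendieck's theorem splits $E$ directly, with no vector-field argument needed. You should insert this case.

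Third, your final paragraph tries to force all the $B_j$ onto a \emph{single} section and attributes this to a ``section-slope computation'' interacting with the inequality. That is neither what the proposition asserts nor how one argues it. The proposition claims only $B_j\in S_0\cup S_\infty$, which follows immediately once the $B_j$ are fixed points of the vertical field $\check X$. The further reduction to one section appears in the paper as a separate remark, and the mechanism is purely combinatorial: a marked point on $S_0$ with weight $p/q$ produces the same iterated blow-up as the point on $S_\infty$ in the same fiber with weight $(q-p)/q$, so one just relabels.
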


\begin{rmk}
If 
$$
2-2g-\sum_j(1-\frac{1}{q_j}) \leq 0
$$
we can suppose that the marked point all lie on the same section. Indeed, the iterated blow-up
associated to a point on the zero section with weight $\frac{p}{q}$ is the same as the one with marked point
in the same fiber on the infinity section with weight $\frac{q-p}{q}$.
Moreover, we see that the infinity and zero sections have opposite slopes so one of them might destabilize the surface.
\end{rmk}

\begin{proof}
Let $\chi^{orb}=2-2g-\sum_j(1-\frac{1}{q_j})$.
In the case 
$\chi^{orb}>0$ the genus $g$ of $\Sigma$ is $0$ and in that case
every ruled surface is of the form $\P(\mathcal{O}\oplus L)$. We suppose that
$\chi^{orb}\leq 0$.

$Bl(M,\mathcal{P})$ admits an extremal metric of non-constant scalar curvature. Thus the extremal
vector field is not zero. It generates an action by isometries on the manifold. Using the openness theorem of
Lebrun and Simanca \cite{ls} we can suppose that the K\"ahler class of the metric is rational.
In that case, the periodicity theorem of Futaki and Mabuchi \cite{fm} implies that the action induced by the extremal metric 
is a $S^1$-action. The extremal vector field is the lift under the iterated blow-up process of a vector field 
$X$ on $M$. This vector field projects to the basis of the ruling $\Sigma$.

The projection vanishes and $X$ is vertical. Indeed, as $\chi^{orb}\leq 0$, this projection is parallel and as
$X$ lifts to the blow-ups it has to vanish somewhere, thus its projection vanishes and is zero. The restriction of $X$
on each fiber vanishes twice because it induces an $S^1$ action. The zero locus provides two sections of the ruling and $E$ splits.
Moreover, in order to preserve this vector field under the blow-up process, the marked point need to be on the zero or
infinity section.
\end{proof}

We can state a partial answer to the question of this section:
\begin{prop}
\label{unstable1}
Let $M=\P(\mathcal{O} \oplus L)$ be a ruled surface over a Riemann surface  of genus $g\geq 1$ with $L$
a holomorphic line bundle of strictly positive degree. If $\mathcal{P}$ is an unstable
parabolic structure on $M$ with every marked point on the infinity or zero section, 
then the iterated blow-up $Bl(M,\mathcal{P})$ carries an extremal K\"ahler metric
of non-constant scalar curvature.
\end{prop}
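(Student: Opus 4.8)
The plan is to produce $Bl(M,\mathcal{P})$ from T\o{}nnesen-Friedman's extremal metric on the minimal model $M$ itself, by an iterated \emph{equivariant} blow-up, applying the gluing analysis of \cite{aps} and \cite{s} (the point-blow-up counterpart of the resolution machinery behind Theorem~\ref{generalresult}); this should yield extremal metrics in K\"ahler classes small on \emph{every} exceptional divisor, complementary to those produced by the orbifold method of Section~\ref{surfaces2}. Since $\deg L>0$ and $g\geq 1$, the surface $M=\P(\mathcal{O}\oplus L)$ carries an extremal K\"ahler metric $\om_0$ of non-constant scalar curvature, by T\o{}nnesen-Friedman's theorem. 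Its extremal vector field $X_s=J\nabla s$ is vertical and generates an $S^1$-action by isometries of $(M,\om_0)$ whose fixed-point set is exactly the zero and infinity sections. Because $g\geq 1$ we have $2-2g-\sum_j(1-\tfrac{1}{q_j})\leq 0$, so, trading a marked point of weight $\tfrac{p}{q}$ on one section for the point of weight $\tfrac{q-p}{q}$ in the same fibre on the other section — which leaves $Bl(M,\mathcal{P})$ unchanged — we may assume that every marked point $B_j$ lies on the section of negative self-intersection; that section is then destabilising. (If $\mathcal{P}$ has no marked point then $Bl(M,\mathcal{P})=M$ and $\om_0$ is the required metric.)

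With this normalisation each $B_j$ is an $S^1$-fixed point of $M$; after blowing it up, the exceptional $\C\P^1$ is $S^1$-invariant and meets the proper transform of the fibre, and of the section through $B_j$, at $S^1$-fixed points, and the next centre prescribed by the continued-fraction (Hirzebruch--Jung) tower is again one of these. Iterating, $Bl(M,\mathcal{P})\to M$ is $S^1$-equivariant and every centre occurring along the way is a fixed point of $S^1$. Let $T$ be a maximal torus in the group of isometries of $(M,\om_0)$ fixing every $B_j$; then $T$ contains this $S^1$ (so $X_s\in\t$), all the successive centres are $T$-fixed, and one can work $T$-equivariantly throughout.

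Now apply, iteratively, the $T$-equivariant Arezzo--Pacard--Singer--Sz\'ekelyhidi gluing theorem: blowing up an extremal $T$-invariant K\"ahler surface at a $T$-fixed point produces, for every sufficiently small gluing parameter, an extremal $T$-invariant metric on the blow-up whose K\"ahler class is that of the original minus $\ep^2\,PD$ of the new exceptional divisor. Running this over the successive centres of the tower with a rapidly decreasing sequence $\ep_1\gg\ep_2\gg\cdots\gg\ep_N$ yields an extremal $T$-invariant metric on $Bl(M,\mathcal{P})$ whose K\"ahler class is small on every exceptional divisor; since away from the exceptional set this metric converges to $\om_0$, whose scalar curvature is non-constant, the metric obtained has non-constant scalar curvature as well. (This is in contrast with Theorem~\ref{theoB}, where the orbifold method leaves the middle $(-1)$-curve of each Hirzebruch--Jung string large.)

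The technical core — the expected main obstacle — is the iterated character of the construction: one must climb the Hirzebruch--Jung tower at the nested scales $\ep_1\gg\cdots\gg\ep_N$ while keeping the weighted a priori estimates for $P_\om^*P_\om$ and the extremal condition uniform along the tower, exactly as Rollin--Singer do in the constant-scalar-curvature case, now in the equivariant extremal setting of \cite{aps} and \cite{s}. At each single stage the gluing is unobstructed: the obstruction is finite-dimensional and governed by the value of the moment map $\xi_\om$ at the centre, and it vanishes because one works equivariantly for a maximal torus and the centre is a torus-fixed point, so that $\h=\t$ — the same mechanism that makes Theorem~\ref{generalresult} obstruction-free; here the centres sit over the destabilising section, at an extreme value of the $S^1$-moment map, and it is precisely the hypotheses $\deg L>0$ and the instability of $\mathcal{P}$ that place us in this regime (and away from the constant-scalar-curvature one of Rollin--Singer). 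I would keep in reserve the orbifold alternative — realising $Bl(M,\mathcal{P})$ as the minimal resolution of an extremal $\P(\mathcal{O}\oplus\mathcal{L})\to\overline{\Sigma}$ with a cone point of order $q_j$ at each $A_j$, as in Theorem~\ref{theoB} — but it forces the orbibundle $\mathcal{L}$ (and gives only a different, less flexible K\"ahler class), so I would fall back on it only if the nested-scale estimates proved intractable.
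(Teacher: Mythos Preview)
Your approach is essentially the same as the paper's: start from the T\o{}nnesen-Friedman (equivalently \cite{acgt}) extremal metric on $M$, observe that the extremal $S^1$-action fixes the zero and infinity sections so that every centre in the Hirzebruch--Jung tower is a torus-fixed point, and apply the Arezzo--Pacard--Singer theorem $T$-equivariantly at each step. Two small comments: the normalisation to a single section and the instability hypothesis play no role in the argument (the paper does not use them either), and your nested-scale worry is unnecessary --- each application of \cite{aps} already outputs a genuine extremal metric on a compact surface, so one simply iterates without any multi-scale bookkeeping.
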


\begin{rmk}
With the work of T\o{}nnesen-Friedman in mind, and the previous result, one could expect that every unstable parabolic structure
which gives rise to extremal metric of non-constant scalar curvature on the associated iterated blow-up would lie on a surface of the
form $\P(\mathcal{O}\oplus L)$ with $L$ of degree different from zero. The theorem \ref{lastone}
proves that this is not the case.
\end{rmk}

\begin{proof}
This is an application of the main theorem in \cite{aps}.

From \cite{acgt} (see also \cite{s2}), we know that there exists extremal K\"ahler metrics of non-constant scalar curvature on $M$.
Then, the action of the extremal vector field is an $S^1$ action which rotates the fibers, fixing the zero and 
infinity sections. Indeed, the maximal compact subgroup of biholomorphisms of these surfaces is isomorphic
to $S^1$ and by Calabi's theorem the isometry group of these metrics must be isomorphic to $S^1$.
Then we can apply the result of Arezzo, Pacard and Singer to each step of the blow-up process, working modulo
this maximal torus of hamiltonian isometries.
\end{proof}

The gluing method of section \ref{secHJsing} enables to obtain more extremal metrics from unstable parabolic structures.
The end of this section consists in the proof of the following, stated as Theorem~\ref{general} in the introduction:
 
\begin{theo}
\label{lastone}
 Let $\Sigma$ be a Riemann surface of Euler characteristic $\chi$ and $L$ a line bundle of degree $d$ on $\Sigma$. 
If $\chi<0$, we suppose that $d=-\chi$ or $d\geq 1-2 \chi$. 
Then there exists an unstable parabolic structure on $\P(\mathcal{O} \oplus L)$ such that the associated iterated 
blow-up admits an extremal K\"ahler metric of non-constant scalar curvature. The K\"ahler class obtained is not 
small on every exceptional divisor.
\end{theo}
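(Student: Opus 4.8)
The plan is to follow the strategy of Section~\ref{surfaces2}: to realize $Bl(\P(\mathcal O\oplus L),\mathcal P)$, for a suitable parabolic structure $\mathcal P$, as the minimal resolution of an extremal K\"ahler orbisurface and then to invoke Theorem~\ref{generalresult}. Thus I want to produce an orbifold Riemann surface $\overline\Sigma$, with underlying smooth curve $\Sigma$, and an orbibundle $\mathcal N$ on it so that $\overline M:=\P(\mathcal O\oplus\mathcal N)$ has negative orbifold Euler characteristic, has only isolated Hirzebruch-Jung singularities, carries a T\o{}nnesen-Friedman extremal metric $\omega$ of non-constant scalar curvature, and has minimal resolution $\widetilde M$ whose minimal model is exactly $\P(\mathcal O\oplus L)$, the parabolic structure $\mathcal P$ cut out on $\P(\mathcal O\oplus L)$ by the singular fibres being unstable. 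Granting this, Proposition~\ref{orbibundles} (in the mildly extended form explained below) supplies $\omega$, Theorem~\ref{generalresult} produces extremal metrics on $\widetilde M=Bl(\P(\mathcal O\oplus L),\mathcal P)$ for all small $\epsilon$, these have non-constant scalar curvature since they converge to $\pi^*\omega$ off the exceptional divisors, and their K\"ahler classes are the ones computed in Section~\ref{classes}.

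The building block will be the pseudo-Hirzebruch orbisurface $\P(\mathcal O\oplus\mathcal K^r_{orb})$ of Section~\ref{construct2}, tensored by the pull-back $\rho^*L_0$ of a degree-zero line bundle $L_0$ on $\Sigma$. I will use that (a) since the T\o{}nnesen-Friedman metric is $U(1,1)$-invariant, this flat twist does not affect its existence on $\P(\mathcal O\oplus\mathcal K^r_{orb}\otimes\rho^*L_0)$ — one still has $\deg_{orb}(\mathcal K^r_{orb}\otimes\rho^*L_0)=-r\chi^{orb}(\overline\Sigma)>0$ — and leaves the cone points and their Hirzebruch-Jung types unchanged; and (b) carrying the proof of Proposition~\ref{surface2} through the twist, the minimal model of the resolution becomes $\P(\mathcal O\oplus L')$ with $L'=\bigl(\mathcal K^r\otimes_j[A_j]^{r-n_j}\bigr)\otimes L_0$, so that $\deg L'=-r\chi+\sum_j(r-n_j)$, as recorded in Section~\ref{classes}. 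Now I choose cone points $A_1,\dots,A_s$ of $\overline\Sigma$ with orders $q_j\geq 3$, an integer $r\in\{1,2\}$ with $\gcd(r,q_j)=1$ for all $j$, and $s$ large enough that $\chi^{orb}(\overline\Sigma)=\chi-\sum_j(1-\tfrac1{q_j})<0$ (for $\chi<0$ already $s=1$ works). By Proposition~\ref{orbifold2} the singularities of $\overline M$ are isolated, of types $A_{p_j,q_j}$ and $A_{q_j-p_j,q_j}$ with $p_j\equiv-r\,[q_j]$ and $n_j=(p_j+r)/q_j$. If $r=1$ then $p_j=q_j-1$, $n_j=1$ and $\deg L'=-\chi$; if $r=2$ (so all $q_j$ odd), then $p_j=q_j-2$, $n_j=1$ and $\deg L'=-2\chi+s$, which, as $s$ varies subject to $\chi^{orb}<0$, attains every integer $\geq 1-2\chi$ when $\chi<0$, and every $d$ when $\chi\geq 0$ (using $r=2$ with enough cone points, the case $r=1$, and $\P(\mathcal O\oplus L)\cong\P(\mathcal O\oplus L^\vee)$). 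This is exactly where the numerical hypothesis enters: $d=-\chi$ or $d\geq 1-2\chi$ (for $\chi<0$), resp.\ no restriction (for $\chi\geq 0$), says precisely that $d$ is an attainable value of $\deg L'$. Having arranged $\deg L'=d$, I take $L_0:=L\otimes(L')^{-1}\in\mathrm{Pic}^0(\Sigma)$, so that $L'=L$ on the nose and $\check M:=\P(\mathcal O\oplus L)$ is the minimal model of $\widetilde M$.

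It remains to check the two loose ends and to locate the difficulty. The parabolic structure $\mathcal P$ on $\check M$ has marked points $B_j$ on the infinity section over the $A_j$ with weights $p_j/q_j\in(0,1)$; since all the $B_j$ lie on $S_\infty$ and none on $S_0$, and the two sections of $\P(\mathcal O\oplus L)$ have opposite self-intersection numbers, the slopes satisfy $\mu(S_0)+\mu(S_\infty)=S_0^2+S_\infty^2=0$, so, as $\sum_j p_j/q_j>0$ (and may be kept non-integral by the choice of the $q_j$), one of $\mu(S_0),\mu(S_\infty)$ is strictly negative and $\mathcal P$ is unstable, the destabilizing section being the infinity section in the convention of Section~\ref{construct2}. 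For the last assertion, by Section~\ref{classes} each of the $s$ chains resolving a singular fibre contains a central $(-1)$-curve $S$ with $\int_S\omega_\epsilon=B=2\pi(b-a)/(2rq)$, independent of $\epsilon$, so $[\omega_\epsilon]$ stays bounded away from zero on these curves and is not small on every exceptional divisor. The real content of the argument lies in the two extensions used in the second paragraph: that the T\o{}nnesen-Friedman construction of Section~\ref{construct2} survives the flat twist (the only place where analysis is involved, though the twist being by a flat bundle makes this a mild generalization) and that the identification of the resolution in Proposition~\ref{surface2} survives it as well; the degree count isolating the admissible $d$, and the instability computation, are elementary.
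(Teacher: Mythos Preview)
Your proof is correct and follows essentially the same route as the paper: you introduce the flat twist $\mathcal K_{orb}^r\otimes\rho^*L_0$ (this is exactly the content of Proposition~\ref{propL}), match degrees via $r\in\{1,2\}$ and order-$3$ cone points to hit $d=-\chi$ or $d\ge 1-2\chi$, and then verify instability and the K\"ahler-class statement. The only cosmetic difference is your instability argument: you use $\mu(S_0)+\mu(S_\infty)=0$ to force one slope negative, whereas the paper computes directly $\mu(S_\infty)=r\chi^{orb}<0$; the latter is cleaner and makes your parenthetical about non-integrality unnecessary, but both arguments are valid for the parameters actually chosen.
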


\begin{rmk}
This theorem provides extremal metrics on iterated blow-ups of parabolically unstable surfaces in different K\"ahler classes
that the one that we got in the proposition \ref{unstable1}. 
It also gives examples in the case $g=0$.
In the cases where $g=0$ or $g=1$, it also provides examples
in the case of a line bundle of degree $0$.
\end{rmk}

\begin{rmk}
The work of Sz\'ekelyhidi \cite{s1}, and \cite{ss}, show that the K\"ahler classes of the metrics constructed by T\o{}nnesen-Friedman are
exactly those which are relatively K-polystable. It might be possible to find a notion of relative parabolic stability
that corresponds to the different parabolic unstable structures considered.
\end{rmk}

We will slightly modify the construction of section \ref{surfaces2} in order to obtain more general results. 
Let $\Sigma$ be a Riemann surface. 
If $L_1$ and $L_2$ are two line bundles over $\Sigma$ of same degree, then $L_1 \otimes L_2^{-1}$ is a flat line bundle.
We will try to write every line bundle on $\Sigma$ in the following manner:
$$
\mathcal{K}^r\otimes_j[A_j]^{r-n_j} \otimes L_0
$$ 
with $L_0$ a flat line bundle.

Let $L$ be a flat line bundle on $\Sigma$. Let $\overline{\Sigma}$ be an orbifold Riemann surface topologically equivalent to $\Sigma$
with strictly negative orbifold euler characteristic .
Recall that 
$$
\pi^{orb}_1(\overline{\Sigma})\cong < (a_i,b_i)_{i=1..g}\, ,\, (l_i)_{i=1..s} \, \vert \, \Pi [a_i,b_i] \, \Pi l_i\, = l_i^{q_i}=1 >.
$$
and
$$
\pi_1(\Sigma)\cong < (a_i,b_i)_{i=1..g}\,  \vert \, \Pi [a_i,b_i] =1 >.
$$
There is a morphism:
$$
\psi: \pi^{orb}_1(\overline{\Sigma}) \rightarrow \pi_1(\Sigma)
$$
which sends $l_i$ to $1$.
As $L$ is flat, there exists a representation 
$$
\rho: \pi_1(\Sigma) \rightarrow U(1)
$$
such that 
$$
L= \widetilde{\Sigma} \times \C / \pi_1(\Sigma),
$$
where $\widetilde{\Sigma}$ is the universal cover of $\Sigma$. The action on the first factor comes from the universal covering and on the second factor from $\rho$.
Thus we have an other representation:
$$
\rho': \pi_1^{orb}(\overline{\Sigma}) \rightarrow U(1)
$$
given by 
$$
\rho'=\rho \circ \psi
$$
and a flat bundle on $\overline{\Sigma}$:
$$
L'= \D \times \C / \pi_1^{orb}(\overline{\Sigma}).
$$
We consider the orbifold K\"ahler surface 
$$
\overline {M}=\P(\mathcal{O}\oplus (\mathcal{K}_{orb}^r\otimes L')).
$$
Following an idea of T\o{}nnesen-Friedman , we see that this orbifold admits an extremal K\"ahler metric of non-constant scalar curvature.
Indeed, there is an extremal metric on $\mathcal{K}_{orb}^r$ which extends to $\P(\mathcal{O}\oplus \mathcal{K}_{orb}^r)$.
This metric and the flat metric on $L'$ provide an extremal metric on $\mathcal{K}_{orb}^r\otimes L'$ which extends similarly. 
The singularities of this orbifold are the same as the one of $\P(\mathcal{O}\oplus \mathcal{K}_{orb}^r)$.
Indeed, the choice of the representation
$$
\rho': \pi_1^{orb}(\overline{\Sigma}) \rightarrow U(1)
$$
is such that 
$$
\rho'(l_i)=1
$$
so the computations done in section \ref{sing} work in the same way. 
We can use the result of theorem \ref{generalresult} and 
we obtain a smooth ruled surface with an extremal K\"ahler metric. 
This surface is an iterated blow-up of a ruled surface which is 
$$
\P(\mathcal{O}\oplus (L_{r,(q_j)}\otimes L))
$$
where 
$$
L_{r,(q_j)}=\mathcal{K}^r\otimes_j[A_j]^{r-n_j}
$$
as in section \ref{surfaces} because the resolution and blow-down process does not affect the ``$L'$ part''.
We can state:
\begin{prop}
\label{propL}
Fix positive integers $r$ and $(q_j)_{j=1..s}$ such that for each $j$, $q_j\geq 3$ and $gcd(r,q_j)=1$. For each $j$, let 
$$p_j\, \equiv \, -r \, [q_j],\: 0<p_j<q_j\,, \: n_j=\frac{p_j+r}{q_j}.$$
Then consider a Riemann surface $\Sigma$ of genus $g$ with $s$ marked points $A_j$. The previous integers define a parabolic structure on 
$$
\check{M}=\P(\mathcal{O} \oplus (L_{r,(q_j)}\otimes L_0))
$$
with 
$$
L_{r,(q_j)}=\mathcal{K}^r\otimes_j[A_j]^{r-n_j}
$$
and $L_0$ any flat line bundle.
The parabolic structure $\mathcal{P}$ consists of the points $B_j$ in the infinity section of the ruling of $\check{M}$ over the points $A_j$ together with the weights $\dfrac{p_j}{q_j}$.
If 
$$
\chi(\Sigma)-\sum_j (1-\dfrac{1}{q_j})<0
$$
then $Bl(\check{M},\mathcal{P})$ admits an extremal K\"ahler metric of non-constant scalar curvature.
\end{prop}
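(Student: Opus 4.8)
The plan is to follow the scheme of the proof of Theorem~\ref{theoB}, the only new feature being that the flat twist $L_0$ is carried inertly through every step. First I would realise $L_0$ as $\widetilde{\Sigma}\times\C/\pi_1(\Sigma)$ for a unitary representation $\rho:\pi_1(\Sigma)\to U(1)$ and pull it back along the natural surjection $\psi:\pi_1^{orb}(\overline{\Sigma})\to\pi_1(\Sigma)$ that kills the local generators $l_i$, obtaining a flat orbifold line bundle $L'=\D\times\C/\pi_1^{orb}(\overline{\Sigma})$ on a hyperbolic orbifold $\overline{\Sigma}$ topologically equivalent to $\Sigma$; such an $\overline{\Sigma}$ exists precisely because the hypothesis forces $\chi^{orb}(\overline{\Sigma})=\chi(\Sigma)-\sum_j(1-\frac{1}{q_j})<0$. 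Then I set $\overline{M}=\P(\mathcal{O}\oplus(\mathcal{K}_{orb}^r\otimes L'))$.

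Next I would equip $\overline{M}$ with an extremal K\"ahler metric of non-constant scalar curvature, by combining the T\o{}nnesen-Friedman metric on $\P(\mathcal{O}\oplus\mathcal{K}_{orb}^r)$ from Proposition~\ref{orbibundles} (valid for constants $0<a<b$ with $b/a<k_{2r}$) with the flat hermitian metric on $L'$, exactly as sketched just before the statement: the flat factor adds nothing to the curvature, preserves the $U(1,1)$-symmetry, and extends smoothly over the projectivisation. The singularity analysis of Section~\ref{sing} then applies verbatim: because $\rho'(l_i)=1$, a generator of the isotropy group $\Z_{q_i}$ of a singular point $A_i$ acts trivially on the $L'$-factor, so the local model near each singular fibre is unchanged and, as in Proposition~\ref{orbifold2}, $\overline{M}$ has $2s$ isolated Hirzebruch-Jung singularities, of types $A_{p_i,q_i}$ and $A_{q_i-p_i,q_i}$ in the fibre over $A_i$. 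Theorem~\ref{generalresult} now produces an extremal K\"ahler metric on the minimal resolution $\widetilde{M}$, of non-constant scalar curvature since the metrics converge to $\pi^*\om$ away from the exceptional divisors and $\om$ has non-constant scalar curvature.

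Finally I would identify $\widetilde{M}$ with $Bl(\check{M},\mathcal{P})$ by rerunning the argument of Proposition~\ref{surface2}. Away from the singular fibres $\overline{M}$ is $\P(\mathcal{O}\oplus(\mathcal{K}_{orb}^r\otimes L'))$ over the smooth locus $\Sigma^*$; near each singular fibre over $A_j$ the Hirzebruch-Jung resolution followed by the blow-downs of the $-1$-curves is the local toric modification which, by Theorem 3.3.1 of \cite{rs}, coincides with the iterated blow-up of $\Delta_j\times\C\P^1$ attached to $(0,[0,1])$ with weight $p_j/q_j$. The gluing map $\phi_j$ replaces $\mathcal{K}_{orb}^r$ by $\mathcal{K}^r\otimes[A_j]^{-p_j/q_j}$ over $\Delta_j$ while leaving the flat factor untouched, since it involves only the base and fibre coordinates; hence the minimal model of $\widetilde{M}$ is $\P(\mathcal{O}\oplus(\mathcal{K}^r\otimes_j[A_j]^{r-n_j}\otimes L_0))=\P(\mathcal{O}\oplus(L_{r,(q_j)}\otimes L_0))=\check{M}$, and the iterated blow-up performed is exactly the Rollin-Singer one attached to $\mathcal{P}$, so $\widetilde{M}=Bl(\check{M},\mathcal{P})$ carries the extremal metric just constructed.

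I do not expect a genuine analytic obstacle here: the estimates are all imported from \cite{aps}, \cite{tf}, \cite{cs} and Theorem~\ref{generalresult}. The point demanding care is to check that the flat twist is truly \emph{inert} in the three places where it could interfere --- the existence and $U(1,1)$-symmetry of the extremal orbifold metric, the isotropy representations at the singular points, and the local toric identification of the resolution --- and each of these reductions comes down to the single observation that $\rho'(l_i)=1$.
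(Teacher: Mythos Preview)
Your proposal is correct and follows essentially the same approach as the paper: realise $L_0$ via a unitary representation $\rho$ of $\pi_1(\Sigma)$, pull it back through the surjection $\psi:\pi_1^{orb}(\overline{\Sigma})\to\pi_1(\Sigma)$ killing the $l_i$ to obtain a flat orbifold bundle $L'$ with $\rho'(l_i)=1$, then run the T\o{}nnesen-Friedman construction, the singularity analysis of Section~\ref{sing}, Theorem~\ref{generalresult}, and the identification of Proposition~\ref{surface2} with the flat factor carried through inertly at each step. The paper, like you, singles out the condition $\rho'(l_i)=1$ as the reason the flat twist does not alter the local models near the singular fibres or the resolution/blow-down process.
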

We now end the proof of theorem \ref{lastone}
\begin{proof}
In order to prove theorem \ref{lastone}, it remains to show that to any Riemann surface $\Sigma$ and to each line bundle $L$ on it, we can associate an orbifold Riemann surface $\overline{\Sigma}$ defined by $\Sigma$, 
marked points $(A_j)$ and weights $q_j\geq 3$ such that 
$\chi^{orb}(\overline{\Sigma})<0$
and
$$
L=\mathcal{K}^r\otimes_j[A_j]^{r-n_j} \otimes L_0
$$
where $L_0$ is a flat line bundle.
Then the associated iterated blow-up admits an extremal metric from proposition \ref{propL}.

So let $L$ be a line bundle over $\Sigma$ and let $d$ be its degree.
We only need to show that there is a line bundle of the form
$$
L_r=\mathcal{K}^r\otimes_j[A_j]^{r-n_j}
$$
with degree $d$ on $\Sigma$, keeping in mind the euler characteristic condition.
If we manage to build such a line bundle, then $L_0=L\otimes L_r^{-1}$
is a flat line bundle and following the last proposition, we know how to obtain an iterated blow-up of
$\P(\mathcal{O}\oplus L)$
with an extremal metric.

We can suppose $d\geq 0$ because 
$$
\P(\mathcal{O}\oplus L) \cong \P(L^{-1}\oplus \mathcal{O})
$$
and 
$\deg(L)=-\deg(L^{-1})$.
We will consider three cases.

First we suppose that the genus $g$ of $\Sigma$ is $0$, that is $\chi=2$.
We consider the orbifold Riemann surface with $s\geq 4$ marked points $A_j$ with orders $q_1=q_2=..=q_{s}=3$ and we set $r=2$. 
With this choice we have 
$$
\chi^{orb}=(2-2g)- \sum (1-\dfrac{1}{q_j})<0.
$$
Then we compute the degree $d'$ of 
$$
\mathcal{K}^r\otimes [A_j]^{r-n_j}.
$$
$$
d'=r (2g-2) + \sum (r-n_j)=-2r+s(r-1)=-4+s.
$$
Then $s= 4+d$ gives the desired degree.

For $g=1$, we consider $d$ marked points of order $3$ and $r=2$.
The degree of the bundle is then equal to $d$.

It remains to study the $\chi<0$ case. 
We use the same method, $s$ marked point of order $3$.
Then $d'=-\chi r + s(r-1)$ if $r=1$ or $r=2$.
$r=1$ gives $d'=-\chi$ and $r=2$ gives $d'=s-2\chi$, which give the restriction stated in \ref{lastone}.

It is not difficult to see that the surfaces considered in the theorem \ref{theoB} with the parabolic structure of 
section \ref{surfaces} are not parabolically stable. 
Indeed, if we consider $\P(\mathcal{O}\oplus L)$, and if we denote $S_0$ and $S_{\infty}$ the zero and infinity sections, 
following \cite{rs} we have 
$$
\mu(S_{\infty})=S_{\infty}^2-\sum_j \alpha_j
$$
and
$$
S_{\infty}^2=deg(\mathcal{O}\oplus L)-2deg(L)=-deg(L).
$$
So 
$$
\mu(S_{\infty})=-(-r\chi + \sum_j (r-n_j))-\sum \dfrac{p_j}{q_j}
$$
$$
\mu(S_{\infty})=r(\chi -\sum_j (1-\dfrac{1}{q_j}))
$$
Thus
$$
\mu(S_{\infty})=r \chi^{orb}<0
$$
And $S_{\infty}$ destabilises $\check{M}$.
\end{proof}

\begin{rmk}
If we consider more general constructions, we have
$$
d'=(-\chi  + s )r - \sum_j n_j
$$
with $s$ marked points. However the left part grows linearly in $r$ while if 
we write $r=q_j r_j-p_j$ the right part decreeses as $-r_j$ so we do not expect to obtain smaller degrees with this method.
\end{rmk}

\section{examples}
We will give two examples which lead to the results Theorem~\ref{cp2} and Theorem~\ref{cp1 t2}.

\subsection{First example}
\label{toriccp2}
Here we will use the extremal metrics on weighted projective spaces constructed by Bryant \cite{b}.
We also refer to the work of Abreu \cite{a}. In his paper, he constructed extremal 
K\"ahler metrics with non-constant scalar curvature on weighted projective spaces
$$
\C\P^2_{a,b,c} := \C^3 / \C^*
$$
where the action is
$$
\forall t \in \C^* , t.(x,y,z):=(t^a x, t^b y, t^c z).
$$
In particular, we can endow $\C\P^2_{1,2,3}$ with an extremal metric. This orbifold has two isolated singularities: $A_{1,2}$ at $[0,1,0]$ and $A_{2,3}$ at $[0,0,1]$. Thus we can endow a minimal resolution $X$ of $\C\P^2_{1,2,3}$ with an extremal K\"ahler metric. Following Fulton (\cite{f}), we use the fan description of these toric manifolds. The fan associated to $\C\P^2_{1,2,3}$ is represented on Figure 1. The minimal resolution is represented Figure 2. The fan of figure 2 is also associated to a three times iterated blow-up of $\C\P^2$.
\begin{figure}[htbp]
\psset{unit=0.85cm}
\begin{pspicture}(-4.5,-7.5)(4.5,0)
$$
\xymatrix @M=0mm{
\bullet & \bullet & \bullet & \bullet & \bullet \\
\bullet & \bullet & \bullet & \bullet & \bullet \\
\bullet & \bullet & \bullet & \bullet & \bullet \\
 \bullet & \bullet & \bullet\ar[u]\ar[r]\ar[llddd] & \bullet & \bullet \\
\bullet & \bullet & \bullet & \bullet & \bullet \\
\bullet & \bullet & \bullet & \bullet & \bullet \\
\bullet & \bullet & \bullet & \bullet & \bullet 
}
$$
\end{pspicture}
\caption{$\C\P^2_{1,2,3}$}
\end{figure}
\begin{figure}[htbp]
\psset{unit=0.85cm}
\begin{pspicture}(-4.5,-7.5)(4.5,0)
$$
\xymatrix @M=0mm{
\bullet & \bullet & \bullet & \bullet & \bullet \\
\bullet & \bullet & \bullet & \bullet & \bullet \\
\bullet & \bullet & \bullet & \bullet & \bullet \\
 \bullet & \bullet & \bullet\ar[u]\ar[r]\ar[llddd]\ar[ld]\ar[ldd]\ar[d] & \bullet & \bullet \\
\bullet & \bullet & \bullet & \bullet & \bullet \\
\bullet & \bullet & \bullet & \bullet & \bullet \\
\bullet & \bullet & \bullet & \bullet & \bullet 
}
$$
\end{pspicture}
\caption{Minimal resolution}
\end{figure}
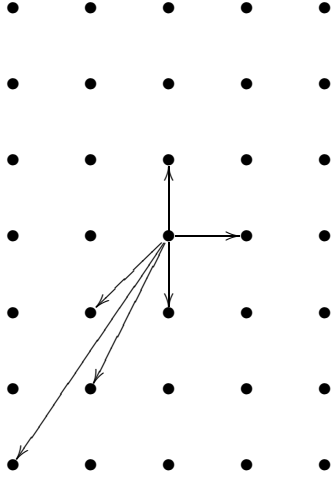

Now we describe the K\"ahler classes which arise this way. The singular homology group $H_2(X,\Z)$ 
is generated by $H,E_1,E_2$ and $E_3$ where $H$ is the proper transform of a hyperplane in $\C\P^2$ 
and the $E_i'$s are the successive exceptional divisors. If the first blow-up is done on a point of $H$, 
we get the following chain of curves:
$$
\xymatrix{
{}\ar@{-}[r]^{H}_{-2} & *+[o][F-]{}
\ar@{-}[r]^{E_3}_{-1} &  *+[o][F-]{}
\ar@{-}[r]^{E_2}_{-2} &  *+[o][F-]{}
\ar@{-}[r]^{ E_1}_{-2} &   .
}
$$
$E_1$ and $E_2$ come from the resolution of the $A_{2,3}$ singularity and $H$ from the resolution of the $A_{1,2}$ singularity of $\C\P^2_{1,2,3}$. So these divisors are small. $E_3$ comes from $\C\P^2_{1,2,3}$ as the pull back of the line $\overline{H}$ joining the two singularities, and integrating the metric on it will be related to the volume of $\C\P^2_{1,2,3}$, which can be chosen arbitrarily. Indeed, the construction of the metric by Abreu is done on a space $\C\P^2_{[1,2,3]}$ and then pulled back to $\C\P^2_{1,2,3}$ by a map 
$$
p:\C\P^2_{1,2,3} \rightarrow \C\P^2_{[1,2,3]}.
$$
As $\C\P^2_{[1,2,3]}$ is diffeomorphic to $\C\P^2$ (it is even biholomorphic but not as an orbifold), its homology group $H_2(\C\P^2_{[1,2,3]},\Z)$ is one dimensional and evaluating the metric on $p(\overline{H})$ will give a constant proportional to the volume. So it is for $\overline{H}$.
Next, following the method of Section~\ref{classes}, we compute the intersection form $Q$ 
in the basis $H$, $E_3$, $E_2$ and $E_1$
$$
Q=\left ( \begin {array} {cccc} 
 -2 & 1 & 0 & 0 \\
 1 & -1 & 1 & 0 \\
 0 & 1 & -2 & 1 \\
 0 & 0 & 1 & -2 \\
 \end{array} \right ).
$$
Then
$$
I=[\ep^2 a_3,a,\ep^2 a_2,\ep^2 a_1]^t
$$
with $a$ and the $a_i$ arbitrary positive numbers and $\ep$ small enough. The computation of $Q^{-1}\cdot I$ gives 
the K\"ahler class
$$
 (3a+\ep^2(a_3+2a_2+a_1))PD(H) + (2a+\ep^2(a_3+a_2))PD(E_1)
$$
$$
+(4a+\ep^2(2a_3+2a_2+a_1))PD(E_2)+(6a+\ep^2(3a_3+4a_2+2a_1)) PD(E_3).
$$
It proves Theorem~\ref{cp2}.

\subsection{Second example}
We now consider the orbifold Riemann surface of genus $1$ with a singularity of order $3$. 
In this case $\chi^{orb}<0$ and we can use the results of Corollary~\ref{orbiextrem} with $r=1$. 
The associated orbifold ruled surface has two singular points of order $3$ and from Proposition~\ref{surface2} 
we know that a minimal resolution is a three times iterated blow-up of the surface $\P(\mathcal{O} \oplus L)$ 
over $\Sigma_1\simeq\T^2$. Here $L=\mathcal{O}$.
Thus we get an extremal K\"ahler metric on a three times blow-up of $\C\P^1\times \T^2$. 
The iterated blow-up can be made more precise. The first point to be blown-up is the point on the zero section 
above the marked point of $\overline{\Sigma}$. The iterated blow-up replace the fiber $F$ by the chain of curves:
$$
\xymatrix{
{}\ar@{-}[r]^{-2} & *+[o][F-]{}
\ar@{-}[r]^{-2} &  *+[o][F-]{}
\ar@{-}[r]^{-1} &  *+[o][F-]{}
\ar@{-}[r]^{ -3} & 
}
$$
with the $-3$ self-intersection curve corresponding to the proper transform of the fiber $F$ above the first blown-up point.
This ends the proof of the Theorem~\ref{cp1 t2} stated in the introduction.

\end{document}